\documentclass[12pt]{amsart}
\usepackage{amsthm,amsmath,amssymb}
\usepackage{lmodern}
\usepackage[colorlinks=true,citecolor=blue,linkcolor=black,urlcolor=blue]{hyperref}

\usepackage{geometry}
 \usepackage{ytableau}
\ytableausetup{boxsize=1em}
\usepackage{graphicx,enumitem}
\usepackage{natbib}
\usepackage{color}
\usepackage{wasysym}

\numberwithin{equation}{section}
\numberwithin{figure}{section}

\theoremstyle{plain} 
\newtheorem{thm}{Theorem}
\newtheorem{lem}[thm]{Lemma}

\theoremstyle{remark}

\theoremstyle{plain}

\newcommand{\M}{\operatorname{M}}
\newcommand{\T}{\operatorname{T}}
\newcommand{\Hf}{\operatorname{H}}
\newcommand{\wt}{\operatorname{wt}}

\begin{document}

\title{Domino tilings of cruciform regions}

\author{Tri Lai\textsuperscript{1}}
\address{$^1$Department of Mathematics, University of Nebraska -- Lincoln, NE 68588, U.S.A.}
\email{tlai3@unl.edu}
\thanks{This research was supported in part by Simons Collaboration Grant (\# 585923).}
\author{Anh Thi Nguyen\textsuperscript{2,3}}
\address{$^2$ Faculty of Mathematics and Computer Science, University of Science, Ho Chi Minh City, Vietnam.}
\address{$^{3}$ Vietnam National University, Ho Chi Minh City, Vietnam}
\email{nathi@hcmus.edu.vn}
\thanks{This research is funded by Vietnam National University, Ho Chi Minh City (VNU-HCM) under grant number T.C2025-18-08.}

\subjclass[2010]{05A15,  05B45}

\keywords{perfect matchings, plane partitions, tilings}

\date{\today}

\dedicatory{}

\begin{abstract}
P. Di Francesco first introduced the “Aztec triangle” in his study of the relationship between the twenty-vertex model and domino tilings. He conjectured an exact formula for the number of tilings of the Aztec triangle, and it has since been proved by several authors. In an attempt to prove the conjecture, M. Ciucu showed that the tiling number of the Aztec triangle divides the tiling number of a new region called the “cruciform region,” a superposition of two Aztec rectangles. Ciucu proved that the number of domino tilings of a cruciform region is given by a simple product formula. In this paper, we generalize Ciucu's tiling formula by providing a generating-function formula for the cruciform region.
\end{abstract}

\maketitle

\section{Introduction}\label{Sec:Intro}
The enumeration of tilings concerns the number of ways to cover lattice regions by tiles without gaps or overlaps. The study dates back to the early 1900s, when 
 MacMahon proved his plane partition formula, equivalently a tiling formula of a hexagon (see \cite{Mac}). Other classical results include Kasteleyn, Temperley, and Fisher’s formula for domino tilings of rectangles (see \cite{Kas}, \cite{Fish}) and Elkies, Kuperberg, Larsen, and Propp's \emph{Aztec diamond theorem} (see \cite{Elkies1, Elkies2}). In our paper, we study the \emph{cruciform regions} first introduced by Mihai Ciucu \cite{Ciucucrucify}.

As a natural generalization of the well-known Aztec diamond, the \emph{Aztec rectangle (region)} $\mathcal{AR}_{m,n}$ is a $45^{\circ}$-rotated $m\times n$ rectangle with a staircase boundary. See the left picture in Figure \ref{C}, where $\mathcal{AR}_{16,6}$ is shown in solid red and $\mathcal{AR}_{9,15}$ is outlined by the bold contour.  A \emph{cruciform region} is formed by overlapping the two Aztec rectangles $\mathcal{AR}_{m+b+d+1,n}$  and $\mathcal{AR}_{m,n+a+c+1}$, denoted $\mathcal{C}^{a,b,c,d}_{m,n}$  (see the right picture in Figure \ref{C}). A cruciform region $\mathcal{C}^{a,b,c,d}_{m,n}$ is \emph{balanced} if it admits a domino tiling. This occurs if and only if $a+b+c+d=m+n-1$,  $\max(a, c)\le m$, and $\max(b,d)\le n$. 

\begin{figure}[ht]\centering
		\includegraphics[width = 15cm]{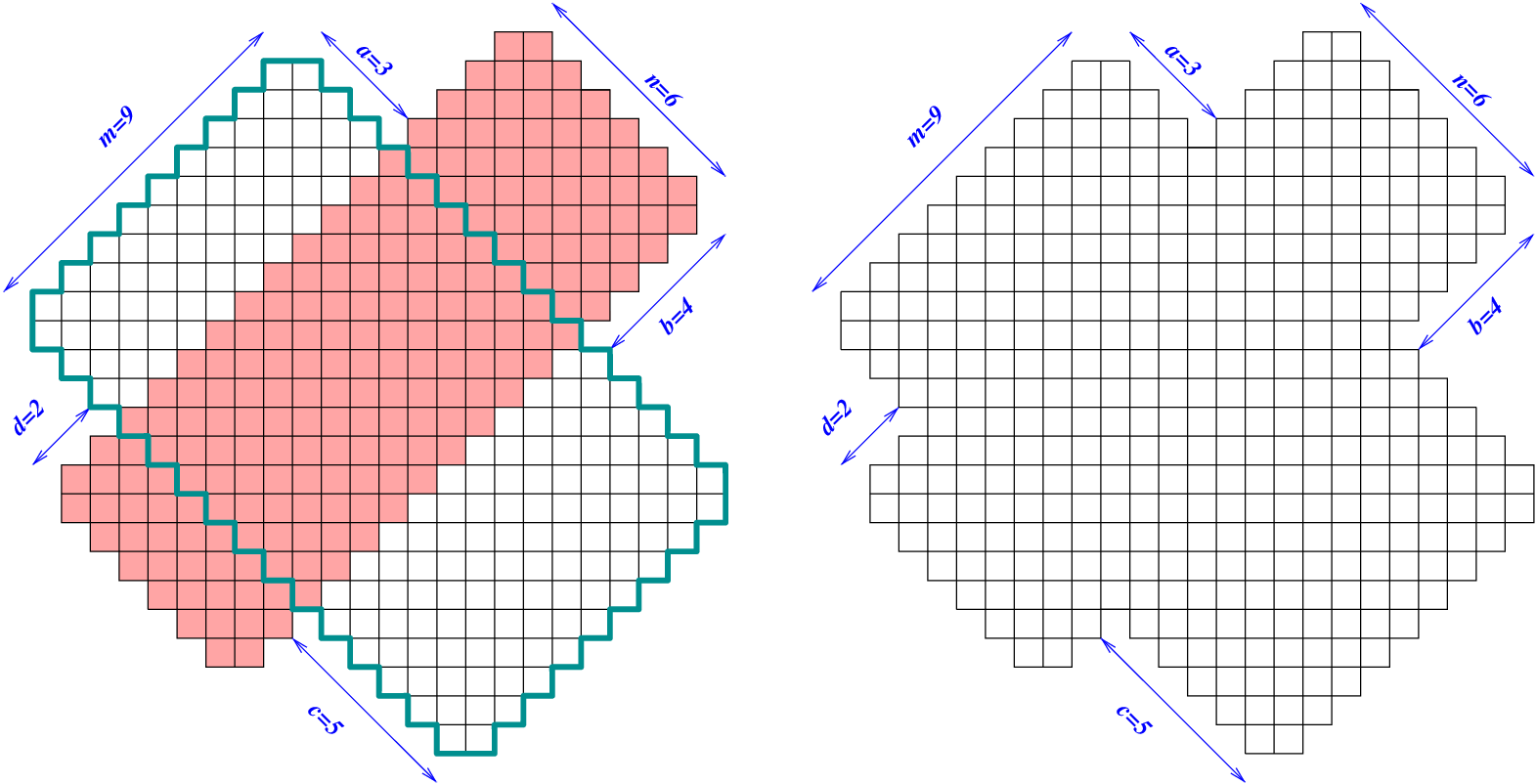}
		\caption{The Cruciform region $C^{a,b,c,d}_{m,n}=C_{9,6}^{3,4,5,2}.$}
		\label{C}
\end{figure}

Inspired by connections between the twenty-vertex model and domino tilings in \cite{20V},  Di Francesco conjectured \cite[Conjecture 8.1]{20V2}  that the number of domino tilings of the so-called \emph{Aztec triangle} is given by a product formula similar to that for the alternating sign matrices (see, e.g., \cite{Zie1, Zie2,Kup}). As partial progress toward this conjecture, Mihai Ciucu derived a formula for the number of tilings of cruciform regions, using a complementation theorem for perfect matchings of graphs with a cellular completion (see \cite{Ciucu97})\footnote{Together with B. Seok Hyun, Ciucu has recently provided a full combinatorial proof of Di Francesco's conjecture in \cite{Ciucucrucify2}.}.
Generalizing Ciucu's work, we investigate a weighted enumeration of the tilings as follows.

We give a chessboard coloring for the unit squares of the cruciform region and place a coordinate system so that the origin is at the center of a white unit square and that the centers of other unit squares are all at integer points as in Figure \ref{ColoredDomino}. We call a unit square an \emph{$(i,j)$-square} if the coordinates of its center are $(i,j)$, for some integers $i$ and $j$. 

There are four types of colored dominoes: \emph{even vertical, odd vertical, even horizontal, and odd horizontal}.   We give weights to these four types of dominoes as follows. Let $e,f,g,h$ be four nonzero real numbers, and let $q$ be an indeterminate. Each odd vertical domino is weighted by $e$, each even horizontal domino is weighted by $f$, an even vertical domino containing a white $(i,j)$-square is weighted by $g\cdot q^{i+j}$, and the odd horizontal domino containing a white $(i,j)$-square is weighted by $h\cdot q^{i+j}$. The weight $\wt(\tau)$ of a tiling $\tau$ is defined to be the product of the weights of its dominoes.

\begin{figure}[ht]\centering
			\includegraphics[width = 13cm]{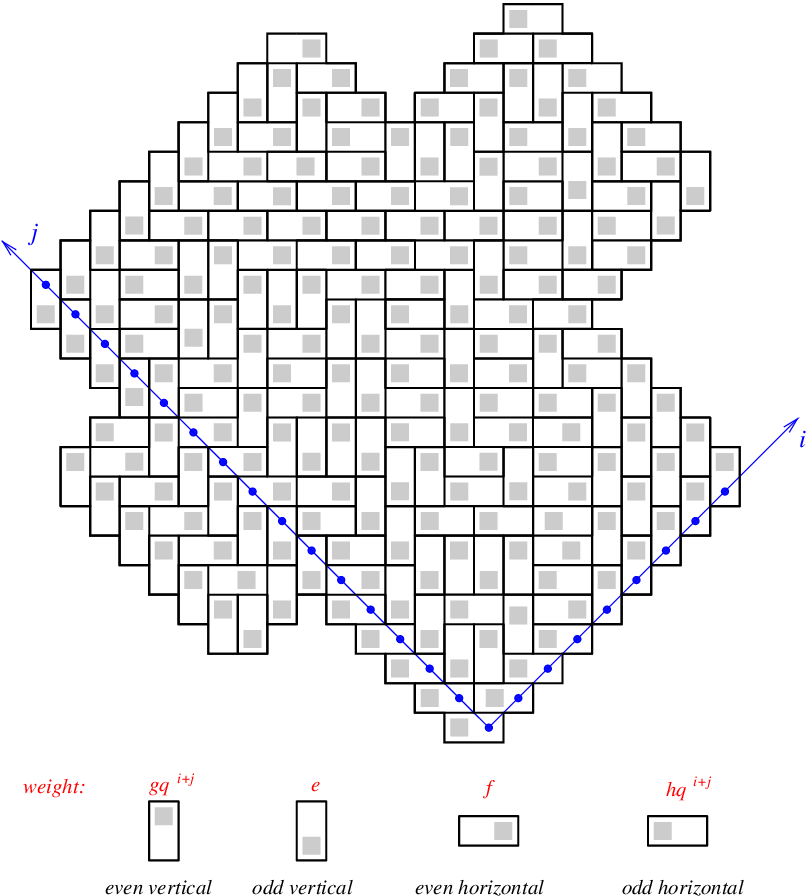}
		\caption{The weight assignment of domino tilings in a cruciform region.}
		\label{ColoredDomino}
\end{figure}

We define the \emph{shifted $q$-factorials} by \[(a;q)_k:=(1-a)(1-aq)\cdots(1-aq^{k-1}),\] for $k\geq 1$ and $(a;q)_0:=1$. Define the \emph{hyper $q$-shifted factor} by \[\Hf_q(n):=\prod_{k=1}^{n-1} (q;q)_k,\] and $\Hf_q (1):=1$.

The tiling generating function of a cruciform region is given in the following theorem. 

\begin{thm} \label{main}
Let $a,b,c,d,m,n$ be positive integers, and let $e,f,g,h$ be four  nonzero real numbers that satisfy the condition 
	\[\frac{g}{h}=q^{-c}\frac{f}{e},\] 
If  $|c-a|$,  say $|c-a|=2s$, then the tiling generating function of the balanced cruciform region $\mathcal{C}^{a,b,c,d}_{m,n}$ is given by
\begin{align}\label{maineq1}
\sum_{\tau} \wt (\tau) &= 2^{m-c}(1-q)^s q^{Q}\notag\\
&\times e^{(n-b)(m-a)}f^{(m+n-c-d)(m-a)}g^{(n-d)(m-c)}h^{(d+c+1)(m-c)}\notag\\
&\times \prod_{i=1}^{c} (egq^{i-1}+hf)^{c-i+1}   \prod_{i=1}^{n} (eg+hfq^{i})^{n-i+1}  \prod_{i=1}^{a} (egq^{i-1}+hf)^{a-i+1}\notag\\
 &\times \prod_{i=1}^{2m-a-c} (q^i;q)_{n-d}(q^{2m-a-c-i+1};q)_{n-b}\cdot  \prod_{j=1}^{n-b} (q^{2m-a-c+j};q)_{n-d}^2 \notag\\
&\times\frac{\Hf_q(m-c)\Hf_q(m-a)\Hf_q(n-d)^2\Hf_q(n-b)^2\Hf_{q^2}(m-c+s)^2}{\Hf_q(n+a+1)\Hf_q(n+c+1)\Hf_q(m-c)\Hf_q(m-c+2s)}\notag\\
&\times \prod_{i=1}^{s} \frac{(q^{3};q^2)_{i-1}(q;q^2)_{i-1}(q^{2(i+s)};q^2)_{m-c}(q^2;q^2)_{i-1}}{(q^2;q^2)_{m-c+s-i}},
\end{align}
 where the sum on the left-hand side ranges over all domino tilings $\tau$ of $\mathcal{C}^{a,b,c,d}_{m,n}$, and where the exponent of $q$ is given by
\begin{align}  
Q%%&\binom{c}{2}+\binom{c+1}{3}+\binom{a}{2}+\binom{a+1}{3}+(m+b+d)\binom{n+1}{2}+(n+c+1)\binom{a+1}{2}+(c-d)\binom{n+1}{2}\notag\\
%%&+\frac12(n+c+1)(m-c)(n+m-2d-3)-(n+1)(a+d+1)(m-a)+(n+1)(m-a)(m-c)\notag\\
%%&+(m-a)(m-c)(n-b)+(2m-a-c)\binom{n-b+1}{2}\notag\\
%%&+\frac{(m-c)(m-c+1)(2m-2c+6s+1)}{6}-\frac{s(s-1)(s-2)}{3}\\
&=\frac{5 a}{6} + 2 a^2 + \frac{a^3}{6} + \frac{a b}{2} - \frac{a b^2}{2} + \frac{2 c}{3} + 
 \frac{3 a c}{2} + \frac{a^2 c}{2} + \frac{b c}{2} - a b c\notag\notag\\
 &- \frac{b^2 c}{2} + \frac{5 c^2}{2} - \frac{c^3}{6} + a d + c d + c^2 d - \frac{7 m}{3} - 2 a m - b m + 
 a b m + b^2 m \notag\\
 & - 4 c m + b c m + \frac{c^2 m}{2} - 2 d m - c d m + 2 m^2 - 
 b m^2 - \frac{c m^2}{2} + \frac{m^3}{3} + a n \notag\\
 &+ \frac{3 a^2 n}{2} + \frac{b n}{2} + a b n + 
 c n + 2 a c n + b c n - \frac{c^2 n}{2} + a d n + c d n - \frac{m n}{2} \notag\\
 &- 3 a m n - 2 b m n - 2 c m n - 2 d m n + \frac{5 m^2 n}{2} - \frac{a n^2}{2} + \frac{b n^2}{2} - \frac{c n^2}{2} + 2 m n^2\notag\\
 & - \frac{2 s}{3} - c s + c^2 s + m s - 
 2 c m s + m^2 s + s^2 - \frac{s^3}{3}.
 \end{align}
	 If   $|c-a|=2s+1$, then we have
\begin{align}\label{maineq2}
\sum_{\tau} \wt (\tau)&=2^{m-c}(1-q)^{2s} q^{Q+\binom{m-c+1}{2}}\notag\\
&\times e^{(n-b)(m-a)}f^{(m+n-c-d)(m-a)}g^{(n-d)(m-c)}h^{(d+c+1)(m-c)}\notag\\
&\times \prod_{i=1}^{c} (egq^{i-1}+hf)^{c-i+1}   \prod_{i=1}^{n} (eg+hfq^{i})^{n-i+1}  \prod_{i=1}^{a} (eg q^{i-1}+hf)^{a-i+1}\notag\\
 &\times\prod_{i=1}^{2m-a-c} (q^i;q)_{n-d}(q^{2m-a-c-i+1};q)_{n-b}\cdot  \prod_{j=1}^{n-b} (q^{2m-a-c+j};q)_{n-d}^2 \notag\\
&\times\frac{\Hf_q(m-c)\Hf_q(m-a)\Hf_q(n-d)^2\Hf_q(n-b)^2\Hf_{q^2}(m-c+s)H_{q^2}(m-c+s+1)}{\Hf_q(n+a+1)\Hf_q(n+c+1)\Hf_q(m-c)\Hf_q(m-c+2s+1)}\notag\\
& \times \prod_{i=1}^{s} \frac{(q^{3};q^2)_{i-1}(q^3;q^2)_{i-1}(q^{2(i+s+1)};q^2)_{m-c}(q^2;q^2)_{i-1}}{(q^2;q^2)_{m-c+s-i}}.
\end{align}

\end{thm}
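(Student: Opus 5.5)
Following Ciucu's original argument, I would reduce the weighted count of $\mathcal{C}^{a,b,c,d}_{m,n}$ to known weighted counts via a factorization/complementation step, and then evaluate the pieces with $q$-enumeration formulas. The first step is to pass to perfect matchings of the planar dual graph, which I will also call $\mathcal{C}^{a,b,c,d}_{m,n}$. Each domino becomes an edge carrying one of the weights $e,f,gq^{i+j},hq^{i+j}$. Next I apply Ciucu's complementation theorem for graphs with a cellular completion. Completing the cruciform graph to a large Aztec diamond (or Aztec rectangle) writes the generating function of $\mathcal{C}^{a,b,c,d}_{m,n}$ as the product of
\begin{itemize}
\item the generating function of the completion,
\item the generating function of the complementary region, which consists of four corner pieces,
\item explicit local factors coming from the cells.
\end{itemize}
The hypothesis $g/h=q^{-c}f/e$ is exactly what keeps every cell of the completion balanced. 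Each $4$-cycle then carries weights whose "cell factor" $egq^{\cdot}+hf$ is independent of the position modulo the shift. This is the source of the factors $\prod(egq^{i-1}+hf)^{c-i+1}$ and $\prod(eg+hfq^i)^{n-i+1}$, matching the $q$-analogue of the Aztec diamond theorem (Elkies--Kuperberg--Larsen--Propp with Stanley's weighting).

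The next step is to apply urban renewal (spider moves) and vertical/horizontal edge contraction to each corner piece. This identifies the corner pieces with weighted Aztec rectangles with defects, equivalently with families of nonintersecting lattice paths. Via the Lindström--Gessel--Viennot lemma, their generating functions become $q$-determinants of binomial type.
\begin{itemize}
\item The two pieces of widths $n-b$ and $n-d$ should give $q$-versions of MacMahon's box formula. These produce the products $\prod (q^i;q)_{n-d}(q^{2m-a-c-i+1};q)_{n-b}$ and $\prod (q^{2m-a-c+j};q)_{n-d}^2$, together with the $\Hf_q(n-b)^2\Hf_q(n-d)^2$ terms.
\item The remaining piece measures the discrepancy $|c-a|$ between the two horizontal arms. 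It is a region with an asymmetric defect of size $2s$ or $2s+1$ relative to an $(m-c)$-sized core. I expect it to reduce to a symplectic/orthogonal-type $q$-determinant, such as the enumeration of symmetric plane partitions or a Proctor-type formula in base $q^2$.
\end{itemize}
That explains the $\Hf_{q^2}$ factors and the product $\prod_{i=1}^s (q^3;q^2)_{i-1}(q;q^2)_{i-1}\cdots$. The parity split between $|c-a|=2s$ and $2s+1$ arises because the determinant has different row structure in the two cases: $(q;q^2)$ versus $(q^3;q^2)$, and $\Hf_{q^2}(m-c+s)^2$ versus $\Hf_{q^2}(m-c+s)\Hf_{q^2}(m-c+s+1)$. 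The powers $2^{m-c}$ and $(1-q)^s$ come from the cell factors of the urban renewals applied in the middle region.

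The final step is bookkeeping. I collect the powers of $e,f,g,h$ by counting dominoes of each type forced in the frozen parts, e.g. $e^{(n-b)(m-a)}$. I then compute the total power $Q$ of $q$ by summing $i+j$ over forced dominoes plus the $q$-shifts produced by each reduction. Because $Q$ is a cubic polynomial, I would verify it by checking small cases and matching the leading behavior, rather than by a direct closed-form sum.

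The main obstacle, I expect, is evaluating the $q$-determinant for the defect region in the middle. I would first try to identify it with a known $q$-enumeration of symmetric or transpose-complementary plane partitions. Failing that, I would evaluate it by Dodgson condensation (Kuo's graphical condensation): guess the product formula, verify the recurrence in $m-c$ and $s$, and check the base cases.
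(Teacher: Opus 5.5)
There is a genuine gap, and it sits exactly at the step you flag as the main obstacle. Your plan treats the reduced graph as a union of independent pieces whose generating functions multiply: two MacMahon-type boxes of widths $n-b$ and $n-d$, plus one ``defect region'' with its own determinant. That is not the structure.

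The reduction itself is fine in spirit. The paper applies iterated spider moves, packaged as the Mega-Sandwich Lemmas, to the three Aztec rectangles $AR_{c,m}$, $AR_{n,m+b+d+1}$, $AR_{a,m}$, rather than using complementation. But after removing forced edges this produces a \emph{connected} graph: a dented semi-hexagon graph $SH_{n+a+1,m-a}$ glued to an upside-down dented $SH_{n+c+1,m-c}$ along $2m-a-c=(m-a)+(m-c)$ common vertices.

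A matching is determined by which interface vertices are matched upward and which downward. So the generating function is a \emph{sum} over splittings $A\sqcup B=[2m-a-c]$ with $|A|=m-c$, $|B|=m-a$, each term a product of two Schur functions evaluated at geometric progressions. The widths $n-d$ and $n-b$ enter only as runs of forced dents at the two ends of the interface. Stripping these off by a Vandermonde computation is what produces $\prod_i(q^i;q)_{n-d}(q^{2m-a-c-i+1};q)_{n-b}\prod_j(q^{2m-a-c+j};q)_{n-d}^2$ and $\Hf_q(n-b)^2\Hf_q(n-d)^2$.

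What remains is $\sum_{(A,B)}s_{\lambda(A)}(q,\dots,q^{m-c})\,s_{\lambda(B)}(q,\dots,q^{m-a})$. The missing key input is Krattenthaler's extension of the Ciucu--Fulmek Schur function identity. It gives the factor $2^{m-c}$ and reduces the sum to one over $s$-subsets $\{k_1<\dots<k_s\}$, where the elements $2k_i$ are moved from the even to the odd part of $[2m-a-c]$. The even/odd Vandermondes then give the $\Hf_{q^2}$ factors. The remaining sum is evaluated by Krattenthaler's $q$-Selberg-type summation in base $q^2$, which produces $\prod_{i=1}^s(q^3;q^2)_{i-1}(q;q^2)_{i-1}\cdots$. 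When $|c-a|$ is odd, $(q^3;q^2)$ replaces $(q;q^2)$, because $[2m-a-c]$ then has one more odd than even element.

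Without this identity or an equivalent, ``guess the formula and verify by Kuo condensation'' is not a plan. You specify no family of graphs closed under condensation, and the coupled sum is not a single Lindstr\"om--Gessel--Viennot determinant.

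Two of your attributions are also wrong, and they stem from the same missing picture.

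First, the reduction needs no relation among $e,f,g,h$: the spider move only requires $xz+yt\neq0$, and the Mega-Sandwich Lemmas hold for arbitrary weights. The hypothesis $g/h=q^{-c}f/e$ is used afterwards. The weight of a matching of the glued graph carries the splitting-dependent factor $(f/e)^{\square(B)}(g/h)^{\square(A)}q^{(n+1)\square(B)+(n+c+1)\square(A)}$, with $\square(U)=\sum_i(u_i-i)$. Only under the hypothesis does this collapse, via $\square(A)+\square(B)=(m-a)(m-c)$, to a constant, turning the sum into the unweighted Schur sum above. As written, your approach cannot explain why a product formula exists only under this condition.

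Second, every cell factor produced by urban renewal has the form $egq^k+hfq^l$ up to powers of $q$. These account for the three products $\prod(egq^{i-1}+hf)^{c-i+1}$, $\prod(eg+hfq^i)^{n-i+1}$, $\prod(egq^{i-1}+hf)^{a-i+1}$. The factors $2^{m-c}$ and $(1-q)^s$ (resp.\ $(1-q)^{2s}$) are independent of $e,f,g,h$, so they cannot arise that way. They come from the Schur identity, the $(1-q)$'s appearing exactly when an even element $2k_i$ is moved into the odd set.
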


The remainder of the paper is organized as follows. Section \ref{Sec:Prelim} reviews local graph transformations and Schur function identities used in the proof. Section  \ref{Sec:ProofMatching} contains the proof of the main theorem. Section \ref{Sec:Complete} provides detailed proofs of the auxiliary results stated in Section \ref{Sec:Prelim}.

\section{Preliminaries}\label{Sec:Prelim}

\subsection{Local Graphical Transformations}

Consider an infinite lattice on the plane, such as the square lattice or the triangular lattice. The \emph{fundamental regions} in the square lattice and the triangular lattice are the unit squares and the equilateral triangles of side length 
$1$, respectively; we call the latter \emph{unit triangles}. The union of any two fundamental regions sharing an edge is called a \emph{tile}, for example, the domino in the square lattice and the unit rhombus (or unit lozenge) in the triangular lattice. A \emph{tiling} of a lattice region is a way to cover the region by tiles so that there are no gaps or overlaps. In the case of the square lattice, we call the tilings the \emph{domino tilings}, and the \emph{lozenge tilings} in the case of the triangular lattice. See Jim Propp's survey papers \cite{Propp, Propp2} for more aspects of tilings. Each tile can carry a \emph{weight}, and the weight of a tiling is defined to be the product of the weights of its tiles.
We denote by $\T(\mathcal{R})$ the sum of weights of all tilings of the region $\mathcal{R}$. If $\mathcal{R}$ does not accept any tiling, we set $\T(\mathcal{R})=0$. In the unweighted case, i.e., each tiling has weight $1$, $\T(\mathcal{R})$ is exactly the number of tilings of $\mathcal{R}$.

The \emph{(planar) dual graph} $G$ of a region $\mathcal{R}$ is the graph whose vertices are the fundamental regions in $R$ and whose edges connect exactly two fundamental regions sharing an edge. For example, the dual graph of the cruciform region $\mathcal{C}_{9,6}^{3,4,5,2}$ in Figure \ref{C} is shown in Figure \ref{DC}.  In this paper, we always consider the $45^{\circ}$-rotated version of the dual graph of a cruciform region. We denote by $C_{m,n}^{a,b,c,d}$ the dual graph of the cruciform region $\mathcal{C}_{m,n}^{a,b,c,d}$, and we call $C_{m,n}^{a,b,c,d}$ a \emph{cruciform graph}. A \emph{(perfect) matching} in a graph is a collection of vertex-disjoint edges that covers all vertices of the graph. If the edges of $G$ carry weight, we define the weight of a matching to be the product of the weights of all its edges. Then $\M(G)$ denotes the sum of weights of all perfect matchings in $G$. There is a natural weight-preserving bijection between the tilings of a region $\mathcal{R}$ and the perfect matchings of its dual graph $G$, i.e., $\M(G)=\T(\mathcal{R})$. In certain cases, it is convenient to enumerate the matchings of the dual graph rather than the tilings of the region directly. 
\begin{figure}[ht]\centering
		\includegraphics[width = 10cm]{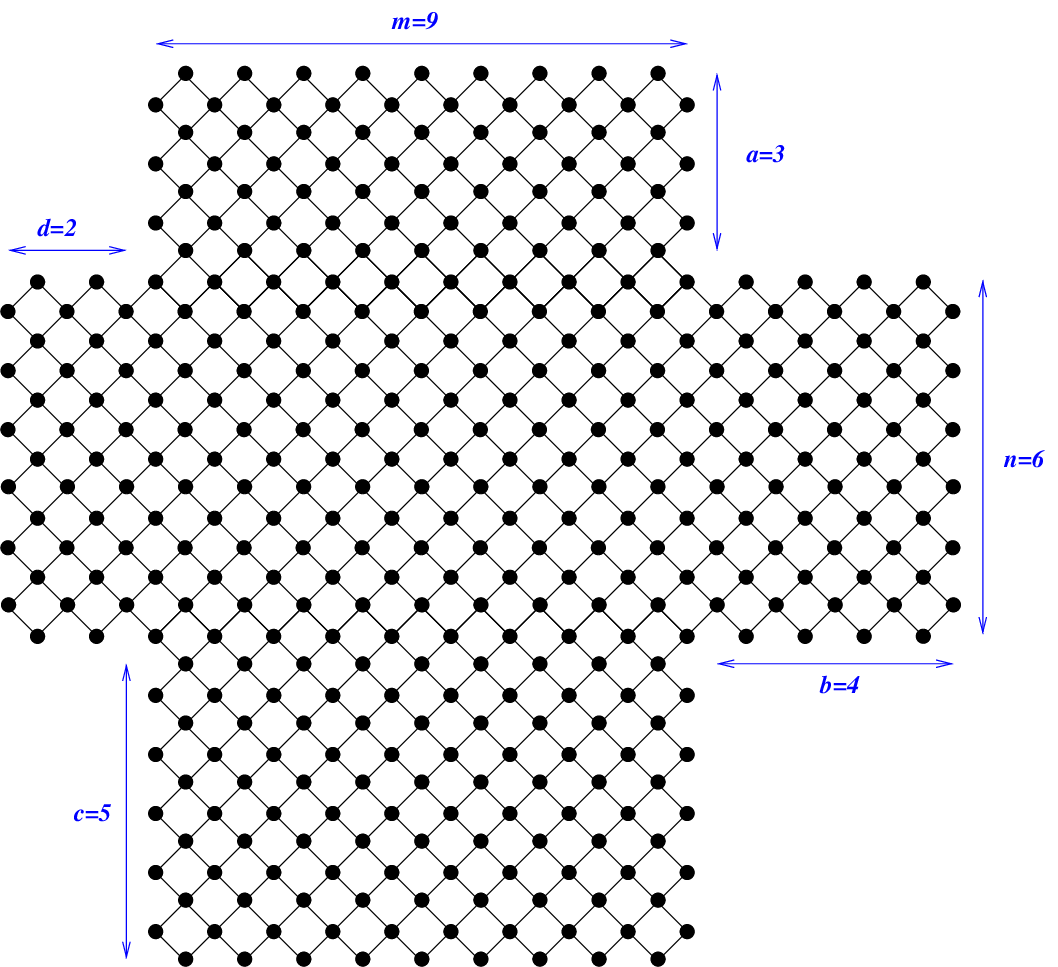}
		\caption{The dual graph of the cruciform region $\mathcal{C}_{m,n}^{a,b,c,d}$.}
		\label{DC}
\end{figure}

An edge of a graph $G$ is called a \emph{forced edge} if it belongs to every matching of $G$. Suppose that we remove forced edges $e_1,e_2,\dots, e_k$ (together with their vertices) from graph $G$\footnote{From now on, we always remove a forced edge together with its vertices.} and denote by $G'$ the resulting graph. Then
\[\M(G)=\left(\prod_{i=1}^{k}\wt(e_i)\right) \cdot \M(G'),\]
where $\wt(e_i)$ denotes the weight of the edge $e_i$. In particular, removal of forced edges of weight $1$ does not change the matching generating function of the graph.

The following three fundamental transformations will be used in our proof.

\begin{lem}[Vertex-Splitting Lemma]\label{VSlem}
Let $v$ be a vertex in $G$. Separate the neighbor set $N(v)$ of $v$ into two disjoint (possibly empty) sets $H$ and $K$. Replace $v$ by two new vertices $v'$ and $v"$, and connect $v'$ to every vertex in $H$ and $v"$ to every vertex in $K$, preserving the edge weights (i.e., $\wt((v',x))=\wt((v,x))$ for any $x\in H$, and $\wt((v",y))=\wt((v,y))$, for any $y\in K$). Add a new vertex $w$ of degree $2$ that connects to $v'$ and $v"$ by two edges of weight $1$. Denote by $G'$ the resulting graph. Then $\M(G)=\M(G')$.
\end{lem}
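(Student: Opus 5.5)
The statement to prove is the Vertex-Splitting Lemma (Lemma \ref{VSlem}). I would prove it by a weight-preserving bijection between the perfect matchings of $G$ and those of $G'$.

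\textbf{The key observation.} In any perfect matching $\mu'$ of $G'$, the new vertex $w$ has degree $2$. So exactly one of the two weight-$1$ edges $(w,v')$, $(w,v")$ lies in $\mu'$.

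\textbf{The map from $G'$ to $G$.}
\begin{itemize}
\item Suppose $\mu'$ contains $(w,v')$. Then $v"$ must be matched to some $y\in K$ by an edge $(v",y)$. Define $\mu$ by deleting $(w,v')$ from $\mu'$ and replacing $(v",y)$ with $(v,y)$.
\item Suppose instead $\mu'$ contains $(w,v")$. Then $v'$ is matched to some $x\in H$. Define $\mu$ by deleting $(w,v")$ and replacing $(v',x)$ with $(v,x)$.
\end{itemize}
All other edges of $\mu'$ avoid $w,v',v"$, so they are edges of $G$ and are kept unchanged. The result $\mu$ covers every vertex of $G$ exactly once: $v$ is covered once, and every other vertex is covered exactly as in $\mu'$. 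Since the deleted edge has weight $1$ and $\wt((v",y))=\wt((v,y))$ (respectively $\wt((v',x))=\wt((v,x))$), we get $\wt(\mu)=\wt(\mu')$.

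\textbf{The inverse map.} Let $\mu$ be a perfect matching of $G$, and let $(v,z)\in\mu$ be the edge at $v$. Since $N(v)=H\sqcup K$ is a disjoint union, exactly one of $z\in H$, $z\in K$ holds.
\begin{itemize}
\item If $z\in H$, replace $(v,z)$ by $(v',z)$ and add $(w,v")$.
\item If $z\in K$, replace $(v,z)$ by $(v",z)$ and add $(w,v')$.
\end{itemize}
This is well defined, weight-preserving, and clearly inverse to the map above.

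Summing weights over the bijection gives $\M(G)=\M(G')$. If either graph has no perfect matchings, the bijection shows that the other has none either, so both sides are $0$.

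The only point requiring care is the disjointness of $H$ and $K$. It is exactly what makes the inverse map well defined, since it determines uniquely which of $v'$, $v"$ inherits the matched edge at $v$. Beyond this there is no real obstacle.
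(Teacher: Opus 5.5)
Your proof is correct and complete. Because $w$ has degree $2$, every perfect matching of $G'$ uses exactly one of $(w,v')$ and $(w,v'')$. Merging $v'$ and $v''$ back into $v$ then gives a weight-preserving bijection with the perfect matchings of $G$, and the disjointness of $H$ and $K$ makes the inverse map well defined.

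The paper gives no proof of this lemma; it lists it as a standard fundamental transformation. Your bijection is the standard argument behind it.
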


\begin{figure}[ht]\centering
		\includegraphics[width = 10cm]{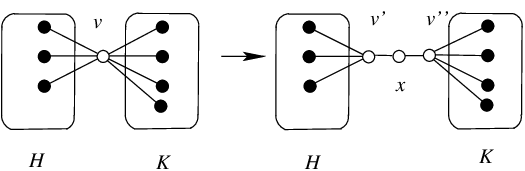}
		\caption{Vertex-splitting}
		\label{VS}
\end{figure}

\begin{lem}[Star Lemma]\label{starlem}
Let $G$ be a weighted graph, and let $v$ be a vertex of $G$. Let 
$G'$ be the graph obtained from $G$ by multiplying the weights of all edges incident to $v$ by 
 a constant $t>0$. Then $\M(G)=\dfrac{1}{t} \cdot \M(G')$.
\end{lem}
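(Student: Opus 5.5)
The statement just proved is the Star Lemma, so the plan is short. I will argue directly from the definition of $\M$ as a sum over perfect matchings. The key observation is that every perfect matching of $G$ covers the vertex $v$ exactly once. Hence each perfect matching $\mu$ contains exactly one edge incident to $v$.

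First, I will check that $G$ and $G'$ have the same underlying graph. Only edge weights change, so they have the same set of perfect matchings, and I can compare the weights of a fixed matching $\mu$ in the two graphs.

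Let $\mu$ be a perfect matching, and let $(v,x)$ be its unique edge incident to $v$. All other edges of $\mu$ keep their weights in $G'$, while $(v,x)$ has weight $t\cdot\wt((v,x))$ in $G'$. So $\wt_{G'}(\mu)=t\cdot \wt_G(\mu)$.

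Summing over all perfect matchings gives $\M(G')=t\,\M(G)$. Since $t>0$, dividing by $t$ yields $\M(G)=\frac{1}{t}\M(G')$.

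One edge case needs a word: a loop or multiple edge at $v$. The lemma concerns simple graphs (dual graphs), so a loop cannot occur, and multiple edges are harmless since each is counted separately. If $G$ has no perfect matching, both sides are $0$ and the identity holds trivially.

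There is no real obstacle here. The only point requiring care is the ``exactly one edge at $v$'' property of perfect matchings. It guarantees that the factor $t$ appears exactly once in every term, rather than zero times or several times.
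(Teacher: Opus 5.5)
Your argument is correct and matches the paper's reasoning. The paper states the Star Lemma without a separate proof, but when it reuses the idea in the proof of Lemma~\ref{semilem2}, it justifies it exactly as you do: each tiling (matching) contains exactly one tile (edge) covering the fixed triangle (vertex), so every term of the generating function picks up the factor $t$ exactly once.
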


The following result, due to Propp, generalizes the ``urban renewal" trick first observed by Kuperberg (see \cite[Section 5]{ProppShuffling}).

\begin{lem}[Spider Lemma or Urban Renewal]\label{spiderlem}
Let $G$ be a weighted graph containing a subgraph $K$ as shown on the left in Figure \ref{Spider} (the labels indicate weights, unlabeled edges have weight $1$). Suppose that $x,y,z,t$ are four real numbers with
$xz+yt\not=0$. Suppose in addition that the four inner black vertices in the subgraph $K$, different from 
$A, B, C, D$, have no neighbors outside $K$. Let $G'$
 be the graph obtained from $G$ by replacing $K$ with the graph $\overline{K}$
 shown on the right in Figure \ref{Spider}, where the dashed lines indicate new edges with the weights shown. Then 
\[\M(G)=(xz+yt) \cdot \M(G').\]
\end{lem}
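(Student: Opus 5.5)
We argue directly, by grouping the perfect matchings of $G$ and of $G'$ according to how the four vertices $A,B,C,D$ are covered, and comparing the two graphs class by class. We read Figure~\ref{Spider} in the standard way. The subgraph $K$ consists of a $4$-cycle on the inner vertices $a,b,c,d$, with edge weights $x,y,z,t$ in cyclic order. Each inner vertex is joined by a weight-$1$ ``spoke'' to the corresponding outer vertex $A,B,C,D$. In $\overline{K}$ the inner vertices are deleted, and $A,B,C,D$ are joined in a $4$-cycle by dashed edges. The dashed edge joining two consecutive outer vertices carries weight $w'/(xz+yt)$, where $w'$ is the weight of the cycle edge of $K$ \emph{opposite} to the edge joining the corresponding inner vertices. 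Concretely, if $ab$ has weight $x$, then the dashed edge $AB$ has weight $z/(xz+yt)$, and similarly for the other three. The precise placement of the four weights must be checked against the figure, but only this ``opposite edge'' rule is used below.

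Fix a perfect matching of $G$. Let $S\subseteq\{A,B,C,D\}$ be the set of outer vertices matched along their spokes. Since $a,b,c,d$ have no neighbours outside $K$, the inner vertices whose spokes are not used must be perfectly matched by edges of the $4$-cycle. This leaves four possibilities for $S$.
\begin{enumerate}[label=(\roman*)]
\item $S=\{A,B,C,D\}$. The contribution from $K$ is $1$.
\item $S=\emptyset$. The inner cycle can be matched in two ways, contributing $xz+yt$.
\item $S$ consists of two cyclically adjacent outer vertices, say $\{C,D\}$. Then the remaining inner edge $ab$ is forced, contributing its weight, here $x$.
\item $|S|$ is odd, or $S$ is a pair of opposite vertices. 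Then the remaining inner vertices cannot be matched within the cycle, so no perfect matching arises.
\end{enumerate}
In every case, the rest of the matching is a perfect matching of $G-(V(K)\setminus\{A,B,C,D\})$ with the vertices of $S$ deleted.

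Now do the same for $G'$. Let $T\subseteq\{A,B,C,D\}$ be the set of outer vertices matched by dashed edges; the dashed edges must match $T$ perfectly among itself.
\begin{enumerate}[label=(\roman*)]
\item $T=\{A,B,C,D\}$. The two perfect matchings of the dashed $4$-cycle contribute
\[\frac{zx+ty}{(xz+yt)^2}=\frac{1}{xz+yt}.\]
\item $T=\emptyset$. The contribution is $1$.
\item $T$ is an adjacent pair, say $\{A,B\}$. The contribution is the weight of $AB$, namely $z/(xz+yt)$.
\item $T$ is any other set. Then $T$ cannot be matched by dashed edges, so there is no contribution.
\end{enumerate}

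The two classifications correspond via $T=S$. In both graphs, the vertices outside $S$ (respectively $T$) are covered by the common external part of the graph, so the external factors agree class by class. It therefore suffices to check that, in each class, the $K$-contribution equals $(xz+yt)$ times the $\overline{K}$-contribution.
\begin{itemize}
\item For $S=T=\{A,B,C,D\}$: $1=(xz+yt)\cdot\frac{1}{xz+yt}$.
\item For $S=T=\emptyset$: $xz+yt=(xz+yt)\cdot 1$.
\item For the adjacent pairs, take $S=\{C,D\}$ in $G$. Its factor is the weight $x$ of the inner edge $ab$. The corresponding class $T=\{C,D\}$ in $G'$ uses the dashed edge $CD$. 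By the opposite-edge rule, $CD$ has weight $x/(xz+yt)$, because $cd$ is the inner edge opposite to $ab$. This gives the required factor $(xz+yt)$, and the other three adjacent pairs are identical by symmetry.
\end{itemize}

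Summing over all classes yields $\M(G)=(xz+yt)\,\M(G')$. The hypothesis $xz+yt\neq0$ is needed only so that the dashed weights are defined.

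The only real obstacle is bookkeeping. One must match the figure's labelling so that each dashed edge indeed receives the weight of the opposite inner edge; otherwise case (iii) fails. The rest is a finite case check.
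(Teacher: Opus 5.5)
Your proof is correct. The paper does not prove this lemma itself; it quotes it from Propp's treatment of urban renewal. Your class-by-class comparison is exactly the standard argument: you group matchings by which of $A,B,C,D$ are covered inside $K$ (respectively $\overline{K}$) and check the seven nonvanishing classes. The \emph{opposite edge} placement you assumed is also the one the paper uses. This is visible in the proof of Sandwich Lemma 1, where a diamond with weights $e,f,gq^i,hq^i$ is replaced, with factor $\Delta q^i$, by a dotted diamond with weights $g/\Delta,\ h/\Delta,\ e/(\Delta q^i),\ f/(\Delta q^i)$.
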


\begin{figure}[ht]\centering
		\includegraphics[width = 10cm]{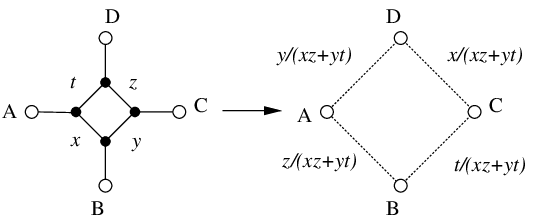}
		\caption{Spider Lemma}
		\label{Spider}
\end{figure}

The first author has used the fundamental transformations above to create more specialized transformations to solve problems in the enumeration of tilings, see, e.g., \cite{Tri1, Tri8, Trinewquarter, Tri10, Triholeyrectangle}. We will also need two additional specialized transformations.

Consider the \emph{Aztec rectangle graph} $AR_{m,n}$  obtained by gluing  $m$ rows of $n$ diamonds together (see the left picture in Figure \ref{ARweight1} for the Aztec rectangle graph $AR_{3,4}$)\footnote{The Aztec rectangle graph $AR_{m,n}$ is exactly the dual graph of the Aztec rectangle region $\mathcal{AR}_{m,n}$ (after being rotated by a $45^{\circ}$ angle).}.  Let $e,f,g,h$ be positive numbers. Define the weight assignment $\wt^{e,f}_{h,g}(q)$ on the edges of $AR_{m,n}$ as follows. For the diamond in the $i$-th row (counted from the bottom) and the $j$-th column (counted from the left), the edge weights are \[e,\quad f,\quad gq^{i+j-2},\quad hq^{i+j-2},\] listed in clockwise order starting from the northwest edge. Denote by $AR_{m,n}(\wt^{e,f}_{h,g}(q))$ the resulting weighted Aztec rectangle graph.  The graph $^|_|AR_{m,n-1}(\wt_{h,g}^{e,f}(q))$ is obtained from  $AR_{m+1,n-1}(wt_{h,g}^{e,f}(q))$ by deleting its top and bottom vertices and adjoining a vertical edge to each of the new top and bottom vertices (see the right picture in Figure \ref{ARweight1}; the dotted edges indicate the removed ones).

\begin{figure}[ht]\centering
\includegraphics[width = 15 cm]{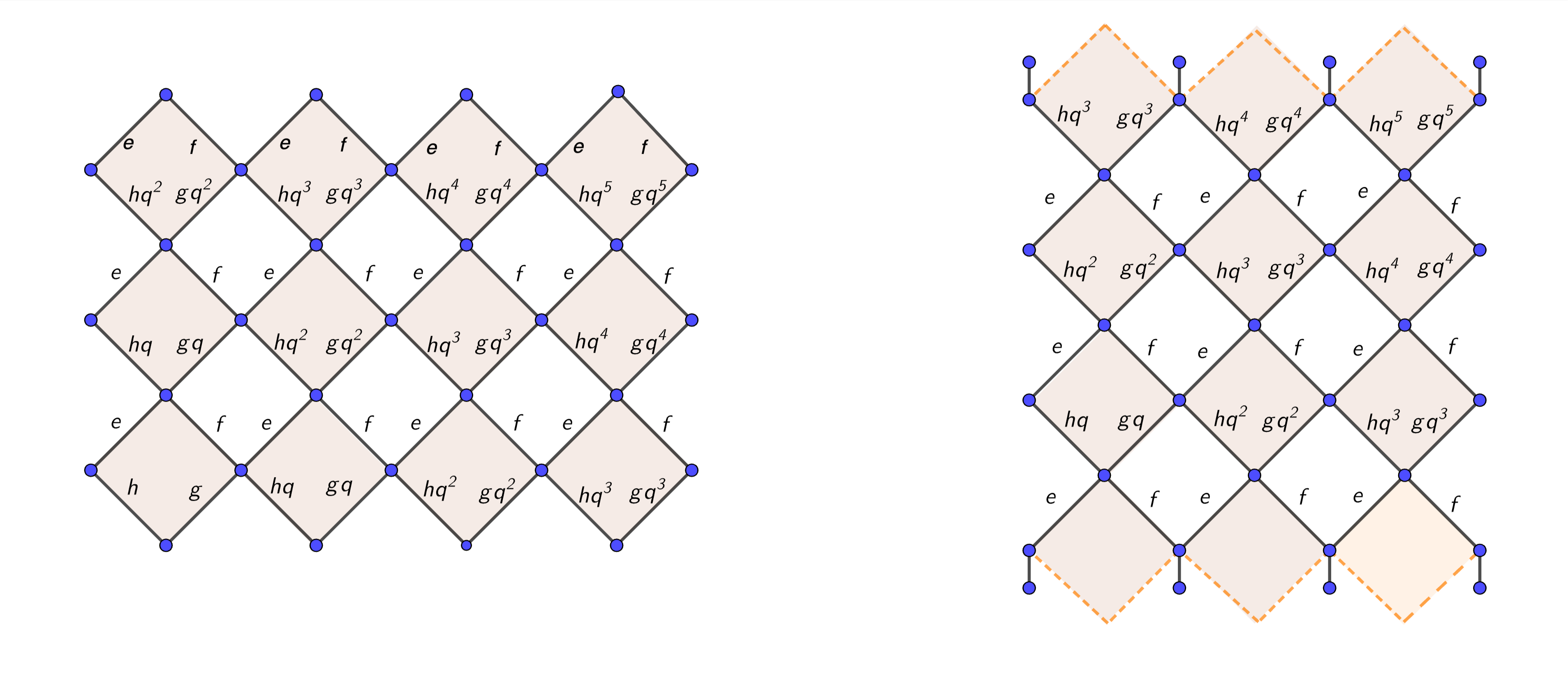}
		\caption{The weighted assignments in $AR_{3,4}(\wt^{e,f}_{h,g}(q))$ (left) and $_|^|AR_{3,3}(\wt^{e,f}_{h,g}(q))$ (right).}
		\label{ARweight1} 
\end{figure} 

The \emph{connected sum} $G\#G'$ of two disjoint graphs $G$ and $G'$ along the ordered sets of vertices $\{v_1,\dots,v_k\} \subseteq V(G)$ and $\{v'_1,\dots,v'_k\}\subseteq V(G')$ is the graph obtained by identifying each pair of corresponding vertices $v_i$ and $v'_i$, for $i=1,2,\dots,k.$\\ 

\begin{figure}[ht]\centering
\includegraphics[width = 15 cm]{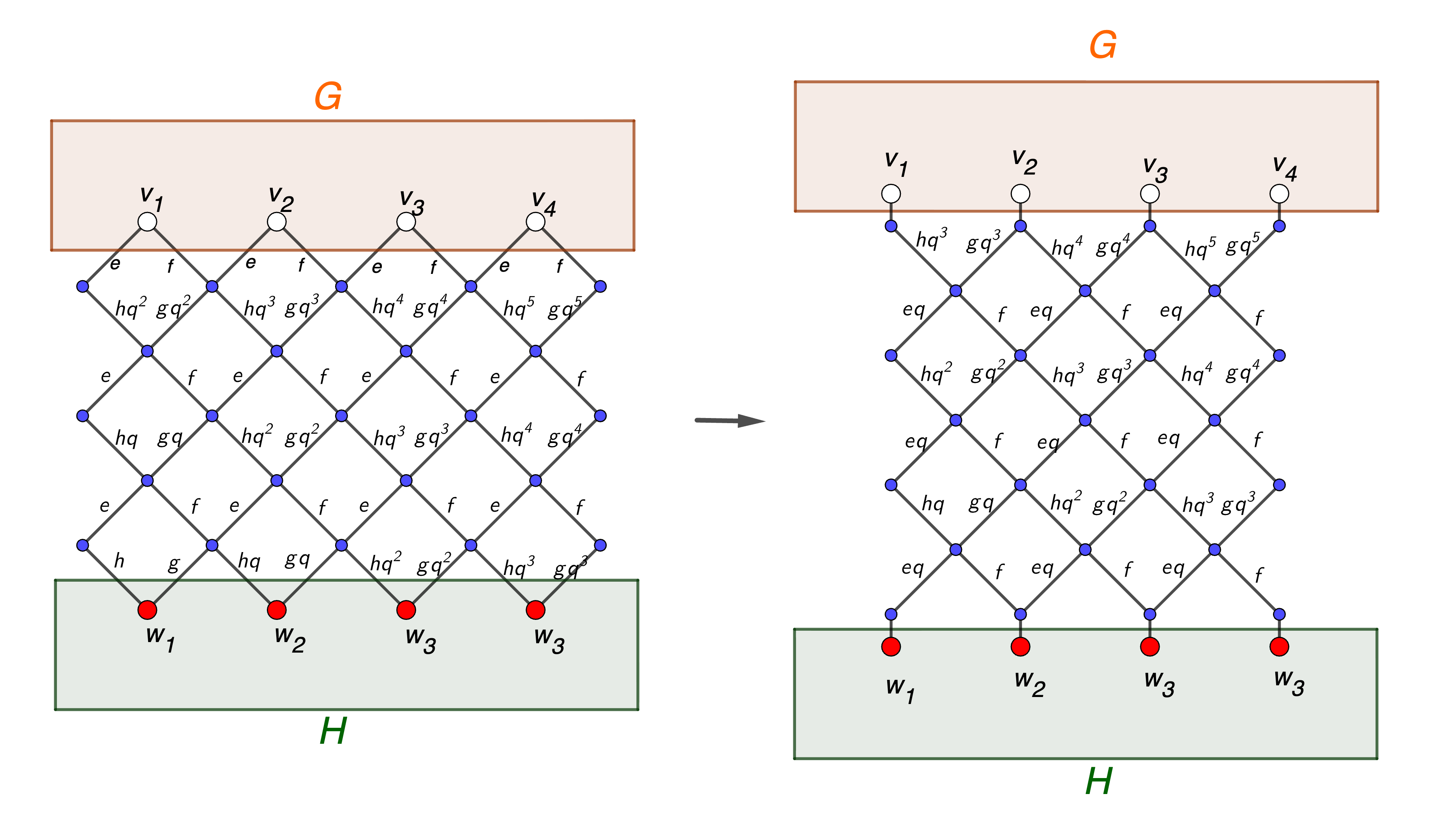}
		\caption{The transformation in Sandwich Lemma 1.}
		\label{AR1} 
\end{figure} 

Now we present some lemmas that will be applied in the paper.

\begin{lem}[Sandwich Lemma 1]\label{lm2}
	Let $G$ be a graph with an ordered vertex subset $\{v_1,$ $v_2,$  $...,$ $v_n\}$, and let $H$ be another graph with an ordered vertex set $\{w_1,w_2,\dots, w_n\}$. 
	Then \[\M(G\# AR_{m,n}(\wt_{h,g}^{e,f}(q))\#H)=(eg+hf)^mq^{\binom{m}{2}}\cdot \M\left(G\# ^|_|AR_{m,n-1}(wt_{h,g}^{eq,f}(q))\#H\right).\] The first connected sum on each side of the identity is taken along the ordered set  $\{v_1,$ $v_2,$ $...,$ $v_n\}$ of $G$ and along the $n$ topmost vertices of the Aztec rectangle graph, and the second connected sum is taken along the ordered set  $\{w_1,w_2,...,w_n\}$ of $H$ and along the $n$ bottommost vertices of the Aztec rectangle graph.	
\end{lem}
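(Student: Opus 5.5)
The plan is to apply the Spider Lemma (Lemma \ref{spiderlem}) to every diamond of the Aztec rectangle graph $AR_{m,n}(\wt^{e,f}_{h,g}(q))$, one row at a time, and then clean up with the Vertex-Splitting, Star, and forced-edge reductions. Throughout, $G$ and $H$ are untouched. The only vertices of the Aztec rectangle shared with the outside are its $n$ topmost and $n$ bottommost vertices. The inner vertices of each diamond have no neighbors outside the diamond's neighborhood, so the hypotheses of the Spider Lemma hold locally.

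First, use the Vertex-Splitting Lemma (Lemma \ref{VSlem}) on the shared vertices on the boundary rows. This makes each diamond a genuine subgraph $K$ of the form in Figure \ref{Spider}, with weight-$1$ pendant paths attached. The diamond in row $i$, column $j$ has weights $e, f, gq^{i+j-2}, hq^{i+j-2}$ in clockwise order. Urban renewal on it therefore contributes the factor
\[
e\cdot gq^{i+j-2}+f\cdot hq^{i+j-2}=(eg+hf)q^{i+j-2}.
\]
It replaces the diamond by a reversed diamond with weights
\[
\frac{gq^{i+j-2}}{(eg+hf)q^{i+j-2}},\qquad \frac{hq^{i+j-2}}{(eg+hf)q^{i+j-2}},\qquad \frac{e}{(eg+hf)q^{i+j-2}},\qquad \frac{f}{(eg+hf)q^{i+j-2}},
\]
placed in the rotated positions.

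Next, remove the resulting degree-$2$ vertices by the inverse of vertex splitting. The new diamonds then form the complementary lattice: an Aztec-rectangle-like graph with one fewer column in each row and one extra row. Its extreme top and bottom vertices become the pendant vertical edges. Reading off which boundary edges become forced, this graph is exactly $^|_|AR_{m,n-1}$, and it is glued to $G$ and $H$ along the same ordered vertex sets.

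Then renormalize the weights with the Star Lemma (Lemma \ref{starlem}) applied at vertices of the new lattice. The goal is to clear the denominators $(eg+hf)q^{i+j-2}$ and return to the pattern $\wt^{eq,f}_{h,g}(q)$. Here the shift from $e$ to $eq$ should come from the mismatch between the $q$-exponents of adjacent old diamonds: a new diamond sits between old diamonds $(i,j)$ and $(i,j+1)$ or $(i+1,j)$, whose exponents differ by $1$.

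The bookkeeping of the global constant is the main obstacle. The spider factors give $\prod_{i,j}(eg+hf)q^{i+j-2}$, and the star rescalings must cancel all but $(eg+hf)^m q^{\binom{m}{2}}$. I would choose star factors depending only on the row index of the new lattice, namely $t_i=(eg+hf)q^{c_i}$. Then the product telescopes across each row, leaving one factor $(eg+hf)q^{i-1}$ per row. Multiplying over $i=1,\dots,m$ gives $(eg+hf)^m q^{\binom{m}{2}}$.

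I would fix the exponents $c_i$ and verify the claimed final weight pattern by checking the cases $m=1$ and $n=1$ by hand, then induct on $m$ by processing the bottom row and viewing the remainder as a connected sum of the same form.
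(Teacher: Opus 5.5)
Your route (vertex splitting, urban renewal on every diamond, deletion of forced edges, then the Star Lemma) is the paper's. However, two steps fail as written, and the second is exactly the bookkeeping you leave open.

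\textbf{Vertex splitting.} Splitting only the boundary vertices does not make the Spider Lemma applicable. Every interior vertex of $AR_{m,n}$ is shared by two adjacent diamonds, so the inner vertices of $K$ would have neighbours outside $K$. You must split at every vertex: horizontally at left/right vertices, vertically at top/bottom ones, including the degree-$2$ vertices at the ends of the rows. The new degree-$2$ vertices are not removed afterwards; they become the vertices of the renewed diamonds. What gets deleted are the $2m$ horizontal pendant edges at the ends of the rows. These are forced because the split-off end vertex has degree $1$. The pendant edges at the $v_j$ and $w_j$ become the vertical edges, and the cells of ${}^|_|AR_{m,n-1}$ are the faces lying \emph{between} the renewed diamonds.

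\textbf{Star factors.} This is the real gap: the Star factors cannot depend only on the row index. Put $\Delta=eg+hf$. The renewed diamond $(i,j)$ carries $g/\Delta,\ h/\Delta,\ e/(\Delta q^{i+j-2}),\ f/(\Delta q^{i+j-2})$, clockwise from the northwest. Now look at the cell in row $i+1$, column $j$ of ${}^|_|AR_{m,n-1}$. Its NW edge is the SE edge of renewed diamond $(i+1,j)$, of weight $e/(\Delta q^{i+j-1})$, and must become $eq$. Its SW edge is the NE edge of renewed diamond $(i,j)$, of weight $h/\Delta$, and must become $hq^{i+j-1}$. The other two edges give similar conditions.

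Let $\alpha_{i,j}$ be the factor at the common vertex of old diamonds $(i,j)$ and $(i,j+1)$, and $\beta$ the factors at the remaining vertices. Solving the conditions forces all $\beta$ to be equal, hence $1$, since the vertical edges to $G$ and $H$ must keep weight $1$. It also forces $\alpha_{i,j}=\Delta q^{i+j-1}$. Already for $m=1$ the factors are $\Delta q^{j}$, so your base-case check would expose the problem. With $j$-independent factors a $q^{-j}$ dependence survives, and you do not obtain $\wt^{eq,f}_{h,g}(q)$. The only row-constant choice matching your count, $t_i=\Delta q^{i-1+n/2}$, is merely the geometric mean of the correct factors.

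With the correct factors (the paper's choice at the $m(n-1)$ circled vertices), the Star factor between old diamonds $(i,j)$ and $(i,j+1)$ equals the spider factor $\Delta q^{i+j-1}$ of diamond $(i,j+1)$. They cancel in pairs, leaving $\Delta q^{i-1}$ per row. Equivalently, $\sum_{i\le m,\,j\le n}(i+j-2)-\sum_{i\le m,\,j\le n-1}(i+j-1)=\binom{m}{2}$. No induction on $m$ is needed, since all diamonds are renewed at once.
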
 		
The proof of the lemma is similar to that of Lemma $2.4$ in \cite{Triholeyrectangle} and is deferred to Section \ref{Sec:Complete}.

In the subsequent lemmas, the same convention for the double connected sum will be used but omitted from the statements.

We also use an alternative weight assignment on $AR_{m,n}$  introduced in \cite{Tri10}, see the left picture in  Figure \ref{ARweight2}. In particular, for the diamond in the $i$-th row (counted from the bottom) and the $j$-th column (counted from the left), the edge weights are \[ gq^{i+j-2},\quad hq^{i+j-2},\quad  e, \quad  f,\] listed clockwise from the northwest edge. We denote by $AR_{m,n}(\overline{wt}^{e,f}_{h,g}(q))$ the resulting weighted Aztec rectangle graph. The graph  $^|_|AR_{m,n-1}(\overline{wt}_{h,g}^{e,f}(q))$ is obtained similarly from $AR_{m+1,n-1}(\overline{wt}_{h,g}^{e,f}(q))$ by removing the top and bottom vertices and appending vertical edges to the new top and bottom vertices (see the right picture in Figure \ref{ARweight2}).

\begin{figure}[ht]\centering
\includegraphics[width = 15 cm]{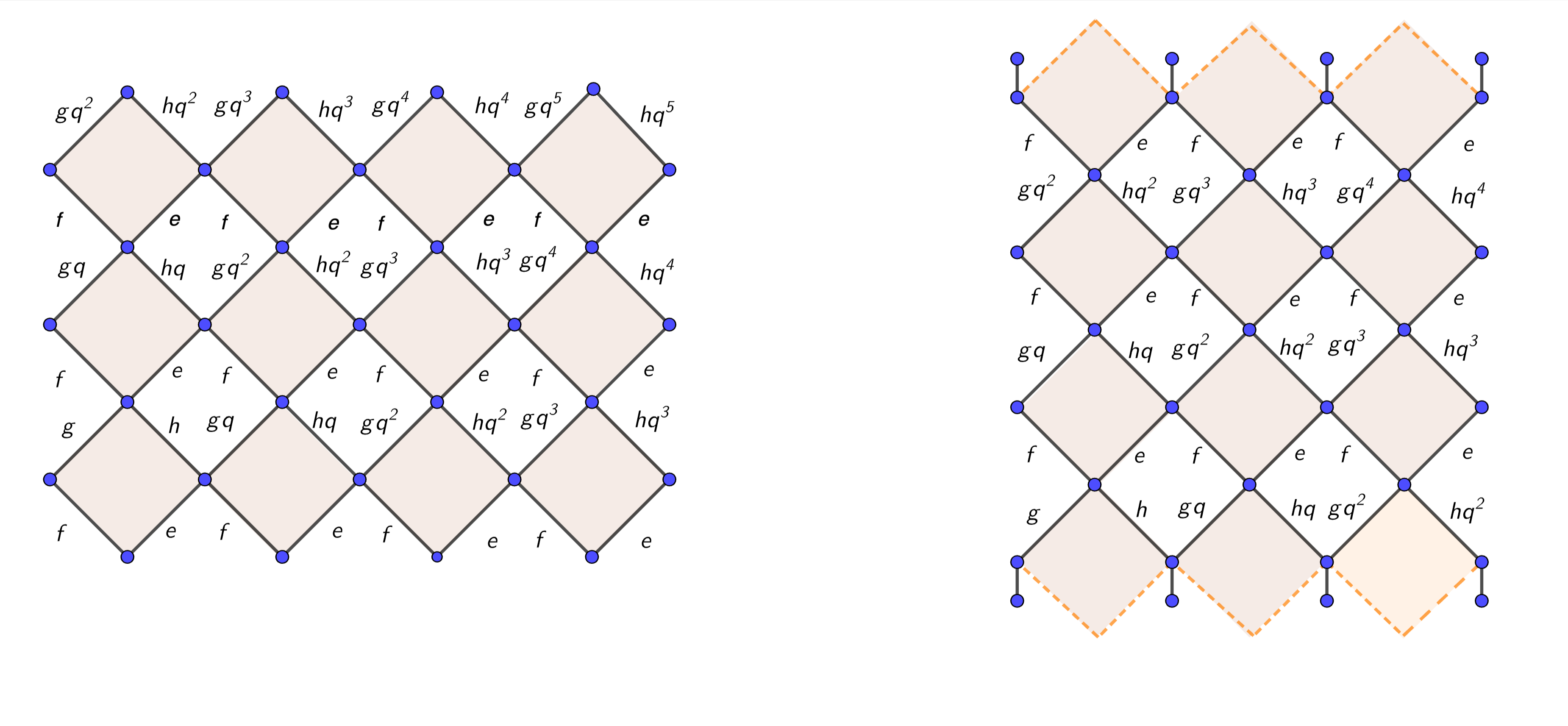}
		\caption{The weight assignments in $AR_{3,4}(\overline{wt}^{e,f}_{h,g}(q))$ (left) and $_|^|AR_{3,3}(\overline{wt}^{e,f}_{h,g}(q))$ (right).}
		\label{ARweight2} 
\end{figure} 

\begin{figure}[ht]\centering
\includegraphics[width = 15 cm]{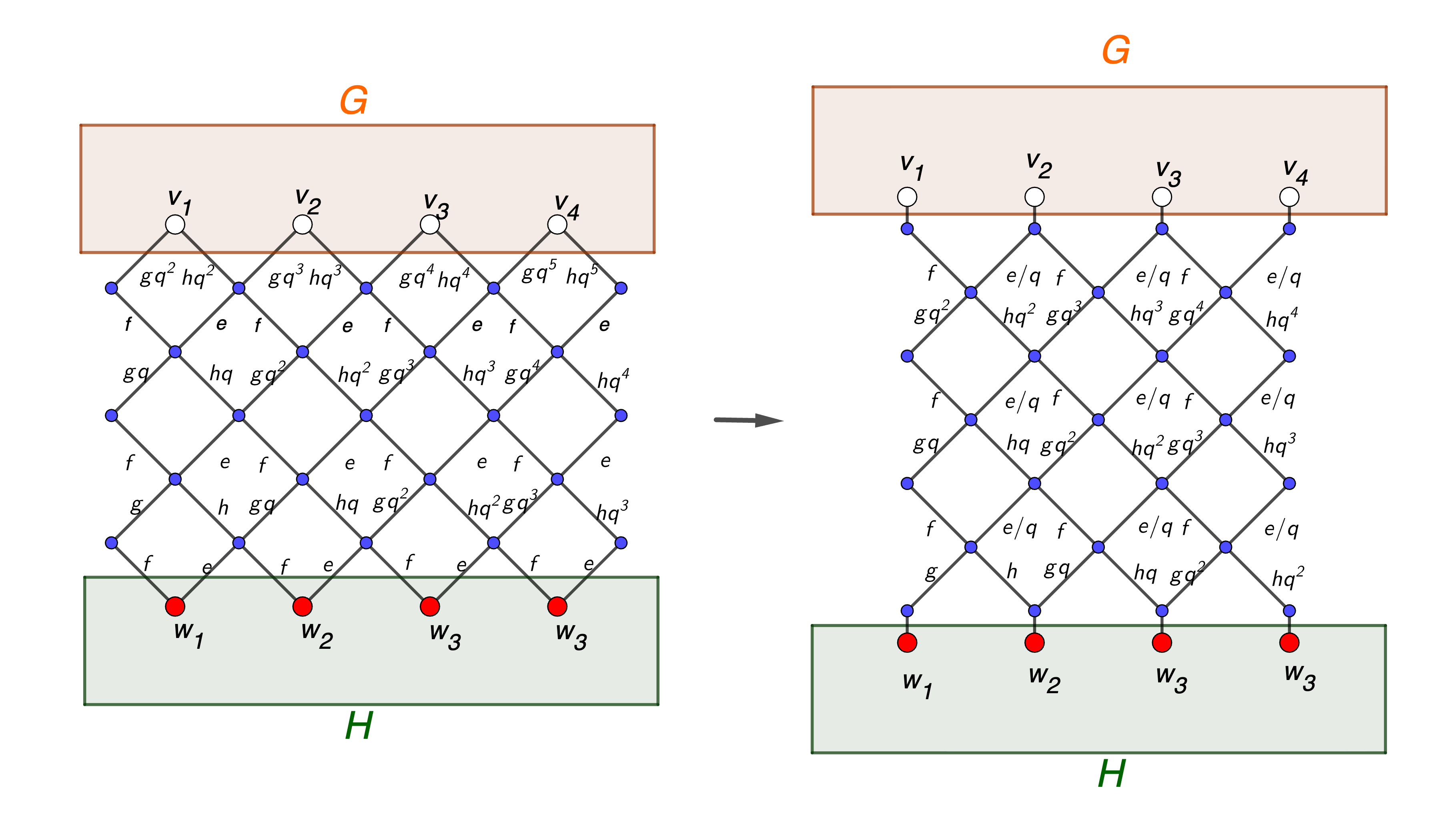}
		\caption{The transformation in Sandwich Lemma 2.}
		\label{AR2} 
\end{figure}

\begin{lem}[Sandwich Lemma 2]\label{lem3}
	Let $G$ be a graph with an ordered vertex subset $\{v_1,$ $v_2,$ $...,$ $v_n\}$, and let $H$ be another graph with an ordered vertex set $\{w_1,w_2,\dots, w_n\}$. 
	Then \[\M(G\# AR_{m,n}(\overline{\wt}_{h,g}^{e,f}(q))\#H)=(eg+hf)^mq^{m(n-1)+\binom{m}{2}} \cdot \M\left(G\# ^|_|AR_{m,n-1}(\overline{\wt}_{h,g}^{e/q,f}(q))\#H\right).\] %The first connected sum in each side of the identity is taken along the ordered set  $\{v_1,v_2,...,v_n\}$ of $G$ and along the $n$ topmost of the Aztec rectangle graph, and the second connected sum is taken along the ordered set  $\{w_1,w_2,...,w_n\}$ of $H$ and along the $n$ bottommost of the Aztec rectangle graph.	
\end{lem}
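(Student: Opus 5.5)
We prove Sandwich Lemma 2 in the same way as Sandwich Lemma 1: apply the Spider Lemma (Lemma \ref{spiderlem}) to every diamond of the Aztec rectangle at once, and then clean up the result with forced edges and the Star Lemma (Lemma \ref{starlem}). The two weightings are related by a reflection. In $\overline{\wt}$ the weights $gq^{i+j-2},hq^{i+j-2},e,f$ sit, clockwise from the northwest edge, where $e,f,gq^{i+j-2},hq^{i+j-2}$ sit in $\wt$. So the argument of Lemma \ref{lm2} carries over, except that the power of $q$ now multiplies a different pair of opposite edges. This changes both the global factor and the rescaling $e\mapsto e/q$.

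\textbf{Step 1: urban renewal on each cell.} Each of the $mn$ diamonds of $AR_{m,n}(\overline{\wt}^{e,f}_{h,g}(q))$ is a $4$-cycle whose four vertices are shared with neighbouring cells or with $G$, $H$. First apply the Vertex-Splitting Lemma (Lemma \ref{VSlem}) at every vertex shared by two or more diamonds. This turns each diamond into the subgraph $K$ of Figure \ref{Spider}, whose four inner vertices have no outside neighbours, and it does not change $\M$. In the diamond in row $i$ and column $j$, the two opposite products are $gq^{i+j-2}\cdot e$ and $hq^{i+j-2}\cdot f$. The Spider Lemma therefore extracts the factor $(eg+hf)q^{i+j-2}$ and replaces the diamond by a smaller diamond with weights
\[
\frac{e}{(eg+hf)q^{i+j-2}},\quad \frac{f}{(eg+hf)q^{i+j-2}},\quad \frac{gq^{i+j-2}}{(eg+hf)q^{i+j-2}},\quad \frac{hq^{i+j-2}}{(eg+hf)q^{i+j-2}},
\]
now placed in the rotated positions. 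The product of the extracted factors is $(eg+hf)^{mn}q^{\sum_{i,j}(i+j-2)}$.

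\textbf{Step 2: forced edges and recombination.} After renewal, the new diamonds sit at the vertices of a shifted $(m+1)\times(n-1)$ grid. Contracting the degree-2 vertices introduced in Step 1 (Vertex-Splitting in reverse) produces $AR_{m+1,n-1}$ with pendant edges along the left and right sides. These pendant edges are forced, and removing them leaves exactly $^|_|AR_{m,n-1}$, with vertical edges at the top and bottom still glued to $G$ and $H$.

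\textbf{Step 3: normalise by stars.} Apply the Star Lemma at every vertex of the new graph, with scalars chosen row by row and column by column. This clears the denominators $(eg+hf)q^{i+j-2}$ and restores the pattern $\overline{\wt}$, with cell weights $gq^{i'+j'-2},hq^{i'+j'-2},e',f$ in the new coordinates. Matching exponents forces $e'=e/q$, because the power of $q$ rides on $g,h$ while $e$ must absorb the one-step shift in the diagonal index. The star factors combine with $(eg+hf)^{mn}q^{\sum(i+j-2)}$ to give the stated total $(eg+hf)^mq^{m(n-1)+\binom m2}$. By construction, no star is applied at the glued vertices of $G$ and $H$, so those subgraphs are untouched.

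The main obstacle is Step 3: choosing the vertex scalars consistently over the whole grid, including the boundary and the vertical pendant edges, and then verifying the exact powers of $q$ and of $(eg+hf)$. The cleanest route is to imitate the bookkeeping in Lemma 2.4 of \cite{Triholeyrectangle}. One assigns scalars $(eg+hf)^{\alpha}q^{\beta(i,j)}$ with $\beta$ linear in $i,j$, determines $\beta$ from two adjacent cells, and then checks the global exponent by summing over the boundary. As a sanity check on the constant, take $m=1$: directly, the rectangle has $n$ cells in a row, and expanding $\M$ gives $(eg+hf)q^{n-1}$, which agrees with $q^{m(n-1)+\binom m2}=q^{n-1}$.
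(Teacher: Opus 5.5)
\textbf{Verdict: genuine gap.} Your strategy is the paper's: split every vertex, apply the Spider Lemma to all $mn$ diamonds, delete forced edges, and rescale with the Star Lemma. Your renewed weights and the spider factor $\Delta^{mn}q^{\sum_{i,j}(i+j-2)}$, with $\Delta=eg+hf$, are correct.

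The problem is that the proposal stops where the content of the lemma lies. Step 3 never says at which vertices the Star Lemma is applied or with what factors. So both the new weight $e/q$ and the constant $(eg+hf)^mq^{m(n-1)+\binom m2}$ are asserted rather than derived, as you acknowledge. Applying stars ``at every vertex'' is also not right: the vertices incident to the weight-$1$ vertical edges to $G$ and $H$ must not be rescaled. The $m=1$ check does not fill the gap either. $\M(G\# AR_{1,n}\# H)$ depends on $G$ and $H$, and you never compare it with the right-hand side.

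Step 2 also misdescribes the intermediate graph, and the correct picture is what makes Step 3 work.
\begin{itemize}
\item The renewed diamonds stay centred at the old $m\times n$ positions; they do not form an $(m+1)\times(n-1)$ grid.
\item The Spider Lemma has already deleted the split copies, so there is nothing to contract. The middle vertices of the splits are the vertices of the new graph.
\item The forced edges to delete are the $2m$ horizontal pendant edges at the two ends of each row. What you get is not ``$AR_{m+1,n-1}$ with side pendants''.
\item After this deletion (call the result $D$), the cells of $^|_|AR_{m,n-1}$ are the old \emph{complementary} faces. The face surrounded by diamonds $(i,j),(i+1,j),(i+1,j+1),(i,j+1)$ has one edge from each of these four renewed diamonds. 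Its weights therefore carry the different denominators $\Delta q^{i+j-2},\Delta,\Delta,\Delta q^{i+j-1}$.
\end{itemize}

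To finish, do as the paper does. Apply the Star Lemma only at the $m(n-1)$ surviving vertices that were shared by diamonds $(i,j)$ and $(i,j+1)$, with factor $\Delta q^{i+j-2}$. This turns the four incident weights $f/(\Delta q^{i+j-2})$, $g/\Delta$, $e/(\Delta q^{i+j-1})$, $h/\Delta$ into $f$, $gq^{i+j-2}$, $e/q$, $hq^{i+j-2}$. Hence the new cell in row $i'$, column $j'$ gets weights $gq^{i'+j'-2},hq^{i'+j'-2},e/q,f$. This is exactly $\overline{\wt}^{e/q,f}_{h,g}(q)$. Writing $E=G\#{}^|_|AR_{m,n-1}(\overline{\wt}^{e/q,f}_{h,g}(q))\#H$, you get
\[\M(D)=\Delta^{-m(n-1)}q^{-\sum_{i=1}^{m}\sum_{j=1}^{n-1}(i+j-2)}\M(E).\]
Combined with the spider factor, this gives
\[\Delta^m q^{\sum_{i=1}^{m}(i+n-2)}=\Delta^m q^{m(n-1)+\binom m2}.\]
For comparison, in Sandwich Lemma 1 the star factor is $\Delta q^{i+j-1}$. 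That difference is exactly where the extra $q^{m(n-1)}$ comes from.
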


The proof of the lemma is also deferred to Section \ref{Sec:Complete}.
	
%	\begin{lem}\label{lem4}
%		Let $G$ be a graph with an ordered subset of vertices $\{v_1,v_2,...,v_n\}.$ Then, $$M(G\#AR_{m,n}(e,f,h,g,q))=(eg+hf)^mq^{m(n-1)+C^2_m}.M(G\#^|AR_{m-\frac{1}{2},n-1}(e/q,f,hq,gq,q)),$$ where $^|AR_{m-\frac{1}{2},n-1}(e/q,f,hq,gq,q)$ is obtained from $AR_{m,n-1}(e/q,f,hq,gq,q)$ by removing the topmost vertices and attaching vertical edges to the resulting topmost vertices. The connected sum is taken along $\{v_1,v_2,...,v_n\}$ in $G$ and along the topmost vertices (ordered from left to right) of the other summands.
%	\end{lem}
%	\begin{proof} Similar to the proof of Lemma $3.4$ in \cite{Tri10}.
%	\end{proof}

By applying Sandwich Lemmas \ref{lm2} and~\ref{lem3} repeatedly, we obtain further transformations.

\begin{figure}[ht]\centering
\includegraphics[width = 6 cm]{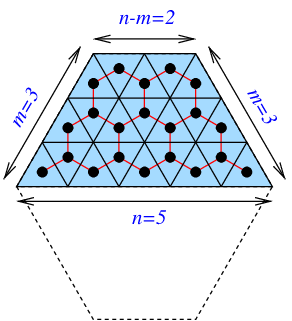}
		\caption{The semi-hexagon $\mathcal{SH}_{3,2}$ and its dual graph.}
		\label{Semihexa} 
\end{figure} 

We denote by $\mathcal{SH}_{m,n-m}$ the \emph{semihexagon} of side-lengths $m,n-m,m,n$. Its dual graph, the \emph{semi-honeycomb graph}, is denoted by $SH_{m,n-m}$ (see Figure \ref{Semihexa} for an example).
Define graph $K_{m,n}$ by attaching the top vertices of the two copies of the semi-honeycomb  $SH_{m,n-m}$ (see Figure \ref{Honey} (left); we disregard the labels for a moment). We now assign weights for the edges of $K_{m,n}$ as follows. View $K_{m,n}$ as a union of $m$ rows of triangles (in the bottom part) and $m$ rows of upside-down triangles (in the top part). In the bottom part, the triangles in the $i$-th row (counted from the base) have the left side weighted by $eq^i$ and the right side weighted by $f$. In the top part, the $j$-th upside-down triangle (counted from the left)  in each row has the left side weighted by $hq^{m+j-1}$ and the right side weighted by $gq^{m+j-1}$. All vertical edges are weighted by $1$. Denote by $K_{m,n}(\wt_{h,g}^{e,f}(q))$ the resulting weighted graph (see the left graph with labels in Figure \ref{Honey}).

\begin{figure}[ht]\centering
\resizebox{!}{10cm}{\begin{picture}(0,0)%
\includegraphics{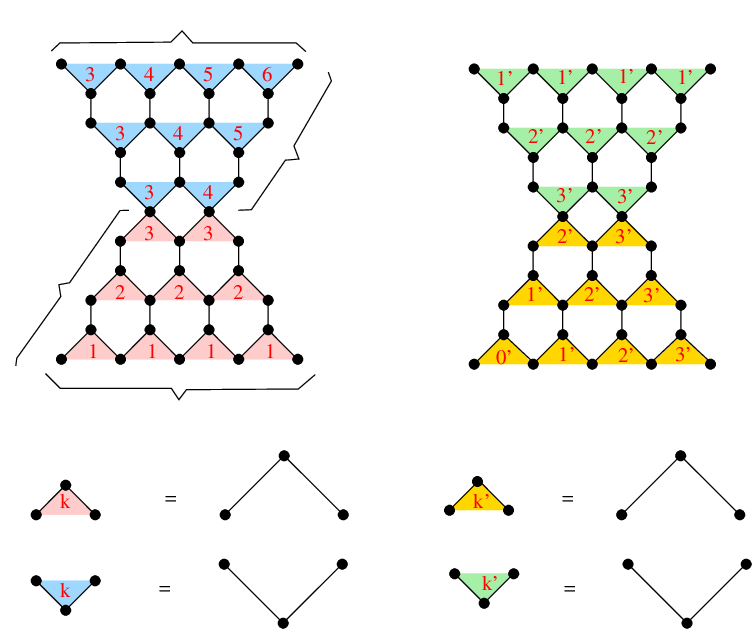}%
\end{picture}%
%
%  Created by WinFIG version 2024.2 
%  METADATA <version>1.0</version> 
%
\setlength{\unitlength}{3947sp}%
\begin{picture}(6017,5032)(1167,-7673)
%  METADATA <id>313</id> 
\put(3015,-6475){\makebox(0,0)[lb]{\smash{\fontsize{10}{12}\normalfont\itshape {\color[rgb]{0,0,1}$eq^k$}%
}}}
%  METADATA <id>314</id> 
\put(3771,-6522){\makebox(0,0)[lb]{\smash{\fontsize{10}{12}\normalfont\itshape {\color[rgb]{0,0,1}$f$}%
}}}
%  METADATA <id>328</id> 
\put(2912,-7483){\makebox(0,0)[lb]{\smash{\fontsize{10}{12}\normalfont\itshape {\color[rgb]{0,0,1}$hq^k$}%
}}}
%  METADATA <id>329</id> 
\put(3716,-7484){\makebox(0,0)[lb]{\smash{\fontsize{10}{12}\normalfont\itshape {\color[rgb]{0,0,1}$gq^k$}%
}}}
%  METADATA <id>386</id> 
\put(2439,-2823){\makebox(0,0)[lb]{\smash{\fontsize{10}{12}\normalfont\itshape {\color[rgb]{0,0,0}$n=5$}%
}}}
%  METADATA <id>388</id> 
\put(2414,-6023){\makebox(0,0)[lb]{\smash{\fontsize{10}{12}\normalfont\itshape {\color[rgb]{0,0,0}$n=5$}%
}}}
%  METADATA <id>389</id> 
\put(3899,-3886){\makebox(0,0)[lb]{\smash{\fontsize{10}{12}\normalfont\itshape {\color[rgb]{0,0,0}$m=3$}%
}}}
%  METADATA <id>390</id> 
\put(1182,-4830){\makebox(0,0)[lb]{\smash{\fontsize{10}{12}\normalfont\itshape {\color[rgb]{0,0,0}$m=3$}%
}}}
%  METADATA <id>514</id> 
\put(6187,-6475){\makebox(0,0)[lb]{\smash{\fontsize{10}{12}\normalfont\itshape {\color[rgb]{0,0,1}$gq^k$}%
}}}
%  METADATA <id>515</id> 
\put(6943,-6522){\makebox(0,0)[lb]{\smash{\fontsize{10}{12}\normalfont\itshape {\color[rgb]{0,0,1}$hq^k$}%
}}}
%  METADATA <id>524</id> 
\put(6143,-7483){\makebox(0,0)[lb]{\smash{\fontsize{10}{12}\normalfont\itshape {\color[rgb]{0,0,1}$f$}%
}}}
%  METADATA <id>525</id> 
\put(6947,-7484){\makebox(0,0)[lb]{\smash{\fontsize{10}{12}\normalfont\itshape {\color[rgb]{0,0,1}$e/q^k$}%
}}}
\end{picture}%
}
		\caption{The graphs $K_{m,n}(\wt_{h,g}^{e,f}(q))=K_{3,5}(\wt_{h,g}^{e,f})$ (left) and $K_{m,n}(\overline{\wt}_{h,g}^{e,f}(q))=K_{3,5}(\overline{\wt}_{h,g}^{e,f})$.}
		\label{Honey} 
\end{figure}

\begin{figure}[ht]\centering
\resizebox{!}{10cm}{\begin{picture}(0,0)%
\includegraphics{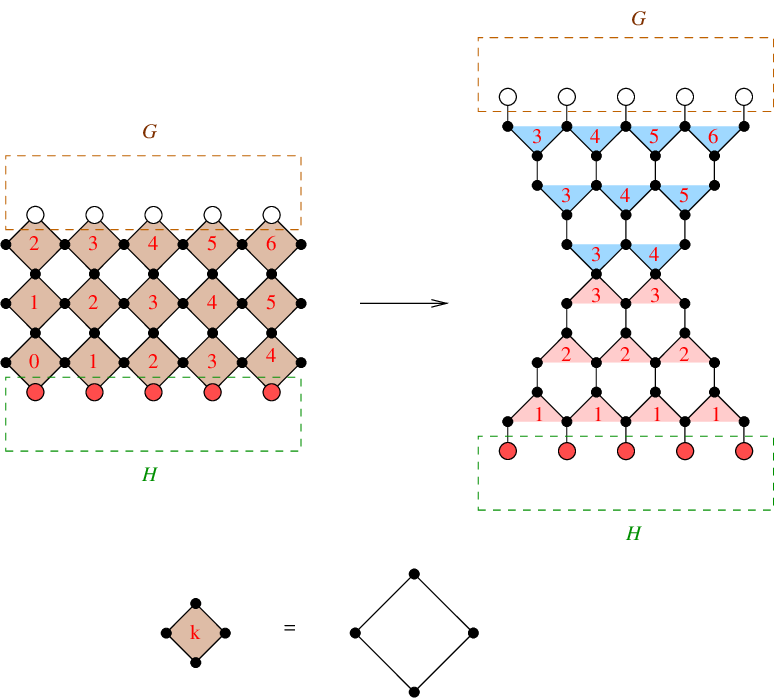}%
\end{picture}%
%
%  Created by WinFIG version 2024.2 
%  METADATA <version>1.0</version> 
%
\setlength{\unitlength}{3947sp}%
\begin{picture}(6200,5568)(2081,-7751)
%  METADATA <id>334</id> 
\put(2316,-3696){\makebox(0,0)[lb]{\smash{\fontsize{10}{12}\usefont{T1}{ptm}{m}{n}{\color[rgb]{0,0,0}$v_1$}%
}}}
%  METADATA <id>335</id> 
\put(2741,-3696){\makebox(0,0)[lb]{\smash{\fontsize{10}{12}\usefont{T1}{ptm}{m}{n}{\color[rgb]{0,0,0}$v_2$}%
}}}
%  METADATA <id>336</id> 
\put(3261,-3696){\makebox(0,0)[lb]{\smash{\fontsize{10}{12}\usefont{T1}{ptm}{m}{n}{\color[rgb]{0,0,0}$v_3$}%
}}}
%  METADATA <id>337</id> 
\put(3733,-3696){\makebox(0,0)[lb]{\smash{\fontsize{10}{12}\usefont{T1}{ptm}{m}{n}{\color[rgb]{0,0,0}$v_4$}%
}}}
%  METADATA <id>338</id> 
\put(4206,-3696){\makebox(0,0)[lb]{\smash{\fontsize{10}{12}\usefont{T1}{ptm}{m}{n}{\color[rgb]{0,0,0}$v_5$}%
}}}
%  METADATA <id>339</id> 
\put(2316,-5586){\makebox(0,0)[lb]{\smash{\fontsize{10}{12}\usefont{T1}{ptm}{m}{n}{\color[rgb]{0,0,0}$w_1$}%
}}}
%  METADATA <id>340</id> 
\put(2788,-5586){\makebox(0,0)[lb]{\smash{\fontsize{10}{12}\usefont{T1}{ptm}{m}{n}{\color[rgb]{0,0,0}$w_2$}%
}}}
%  METADATA <id>341</id> 
\put(3261,-5586){\makebox(0,0)[lb]{\smash{\fontsize{10}{12}\usefont{T1}{ptm}{m}{n}{\color[rgb]{0,0,0}$w_3$}%
}}}
%  METADATA <id>342</id> 
\put(3686,-5586){\makebox(0,0)[lb]{\smash{\fontsize{10}{12}\usefont{T1}{ptm}{m}{n}{\color[rgb]{0,0,0}$w_4$}%
}}}
%  METADATA <id>343</id> 
\put(4158,-5586){\makebox(0,0)[lb]{\smash{\fontsize{10}{12}\usefont{T1}{ptm}{m}{n}{\color[rgb]{0,0,0}$w_5$}%
}}}
%  METADATA <id>349</id> 
\put(6059,-6050){\makebox(0,0)[lb]{\smash{\fontsize{10}{12}\usefont{T1}{ptm}{m}{n}{\color[rgb]{0,0,0}$w_1$}%
}}}
%  METADATA <id>350</id> 
\put(6531,-6050){\makebox(0,0)[lb]{\smash{\fontsize{10}{12}\usefont{T1}{ptm}{m}{n}{\color[rgb]{0,0,0}$w_2$}%
}}}
%  METADATA <id>351</id> 
\put(7004,-6050){\makebox(0,0)[lb]{\smash{\fontsize{10}{12}\usefont{T1}{ptm}{m}{n}{\color[rgb]{0,0,0}$w_3$}%
}}}
%  METADATA <id>352</id> 
\put(7429,-6050){\makebox(0,0)[lb]{\smash{\fontsize{10}{12}\usefont{T1}{ptm}{m}{n}{\color[rgb]{0,0,0}$w_4$}%
}}}
%  METADATA <id>353</id> 
\put(7901,-6050){\makebox(0,0)[lb]{\smash{\fontsize{10}{12}\usefont{T1}{ptm}{m}{n}{\color[rgb]{0,0,0}$w_5$}%
}}}
%  METADATA <id>359</id> 
\put(6097,-2791){\makebox(0,0)[lb]{\smash{\fontsize{10}{12}\usefont{T1}{ptm}{m}{n}{\color[rgb]{0,0,0}$v_1$}%
}}}
%  METADATA <id>360</id> 
\put(6522,-2791){\makebox(0,0)[lb]{\smash{\fontsize{10}{12}\usefont{T1}{ptm}{m}{n}{\color[rgb]{0,0,0}$v_2$}%
}}}
%  METADATA <id>361</id> 
\put(7042,-2791){\makebox(0,0)[lb]{\smash{\fontsize{10}{12}\usefont{T1}{ptm}{m}{n}{\color[rgb]{0,0,0}$v_3$}%
}}}
%  METADATA <id>362</id> 
\put(7514,-2791){\makebox(0,0)[lb]{\smash{\fontsize{10}{12}\usefont{T1}{ptm}{m}{n}{\color[rgb]{0,0,0}$v_4$}%
}}}
%  METADATA <id>363</id> 
\put(7987,-2791){\makebox(0,0)[lb]{\smash{\fontsize{10}{12}\usefont{T1}{ptm}{m}{n}{\color[rgb]{0,0,0}$v_5$}%
}}}chang%  METADATA <id>297</id> 
\put(4250,-7561){\makebox(0,0)[lb]{\smash{\fontsize{10}{12}\normalfont\itshape {\color[rgb]{0,0,1}$hq^k$}%
}}}
%  METADATA <id>294</id> 
\put(4350,-6994){\makebox(0,0)[lb]{\smash{\fontsize{10}{12}\normalfont\itshape {\color[rgb]{0,0,1}$e$}%
}}}
%  METADATA <id>295</id> 
\put(5200,-6994){\makebox(0,0)[lb]{\smash{\fontsize{10}{12}\normalfont\itshape {\color[rgb]{0,0,1}$f$}%
}}}
%  METADATA <id>296</id> 
\put(5200,-7561){\makebox(0,0)[lb]{\smash{\fontsize{10}{12}\normalfont\itshape {\color[rgb]{0,0,1}$gq^k$}%
}}}
\end{picture}%
}
\caption{The transformation in Mega-Sandwich Lemma 1.}\label{Megasandwich1}
\end{figure}

\begin{lem}[Mega-Sandwich Lemma 1]\label{megalem1}
	Let $G$ be a graph with an ordered vertex subset $\{v_1,v_2,...,v_n\}$, and let $H$ be another graph with an ordered vertex set $\{w_1,w_2,\dots, w_n\}$. 
	Then
 \begin{align}
	\M(G\# AR_{m,n}(\wt_{h,g}^{e,f}(q))\#H)=&\prod_{i=1}^{m} (egq^{i-1}+hf)^{m-i+1} \cdot q^{\binom{m}{2}+\binom{m+1}{3}}\notag \\
	&\times \M\left(G\# _| ^|K_{m,n}(\wt_{h,g}^{e,f}(q)) \#H\right),
\end{align}
	where $_| ^|K_{m,n}(\wt_{h,g}^{e,f}(q))$ is the graph obtained by appending a vertical edge (of weight $1$) to each vertex on the top and bottom of $K_{m,n}(\wt_{h,g}^{e,f}(q))$. See Figure \ref{Megasandwich1}.  %The first connected sum in each side of the identity is taken along the ordered set  $\{v_1,v_2,...,v_n\}$ of $G$ and along the $n$ topmost of the Aztec rectangle graph, and the second connected sum is taken along the ordered set  $\{w_1,w_2,...,w_n\}$ of $H$ and along the $n$ bottommost of the Aztec rectangle graph.	
\end{lem}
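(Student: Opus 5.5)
We prove Mega-Sandwich Lemma 1 by applying Sandwich Lemma~\ref{lm2} $m$ times, peeling off one layer of the Aztec rectangle at each step. This is the procedure hinted at by the phrase ``By applying Sandwich Lemmas \ref{lm2} and~\ref{lem3} repeatedly''.

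\textbf{First application.} Sandwich Lemma~\ref{lm2} turns $G\# AR_{m,n}(\wt_{h,g}^{e,f}(q))\#H$ into
\[
G\# {}^|_|AR_{m,n-1}(\wt_{h,g}^{eq,f}(q))\#H,
\]
at the cost of a factor $(eg+hf)^m q^{\binom{m}{2}}$. The new graph has a row of vertical edges at the top and at the bottom. We regard these vertical edges together with the outermost rows of diamonds as part of the outer graphs. The core between them is again a weighted Aztec rectangle of the same type, now with one fewer row and $e$ replaced by $eq$.

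\textbf{Bookkeeping for the core.} This relabelling must be checked against the weight convention $\wt^{e,f}_{h,g}(q)$. In ${}^|_|AR_{m,n-1}$ the diamonds come from $AR_{m+1,n-1}$ with the top and bottom vertices removed. The row and column indices shift, so the $q$-exponents $i+j-2$ of the $g,h$ edges shift by a constant power $q^k$. We absorb this constant with the Star Lemma~\ref{starlem}, or equivalently by rescaling $g$ and $h$ by the same power of $q$. Rescaling both by $q^k$ leaves the ratio $g/h$ unchanged and rescales $eg+hf$ in a controlled way.

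\textbf{The induction.} We then apply Sandwich Lemma~\ref{lm2} to the core $AR_{m-1,n-1}$. Inductively, at the $i$-th step the core is an Aztec rectangle with $m-i+1$ rows and effective weight $eq^{i-1}$ in place of $e$. That step contributes the factor
\[
(egq^{i-1}+hf)^{m-i+1}\, q^{\binom{m-i+1}{2}}
\]
together with the $q$-powers coming from the index shifts. After $m$ steps the Aztec rectangle is exhausted. What remains between $G$ and $H$ is a stack of layers: from below, the triangles whose left sides are weighted $eq^i$ and right sides $f$; from above, the upside-down triangles weighted $hq^{m+j-1}$ and $gq^{m+j-1}$; and vertical edges of weight $1$. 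One then verifies that this stack is exactly ${}_|^|K_{m,n}(\wt_{h,g}^{e,f}(q))$, namely two semi-honeycombs $SH_{m,n-m}$ glued along their top vertices, with the appended vertical edges at the top and bottom. The row-by-row weights match the description of $K_{m,n}$ because the $i$-th peeled layer carries the $e$-weight $eq^i$.

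\textbf{The constant.} Multiplying the factors gives $\prod_{i=1}^m (egq^{i-1}+hf)^{m-i+1}$ and the power
\[
q^{\sum_{i=1}^{m}\binom{i}{2}}=q^{\binom{m+1}{3}},
\]
plus the accumulated shift powers. These shift powers should total $q^{\binom{m}{2}}$, for instance one extra factor $q^{i-1}$ at step $i$ from renormalising the $g,h$ exponents.

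\textbf{Main obstacle.} The hard part is this exact exponent accounting together with the identification of the final graph as $K_{m,n}$. We must track precisely how the $q$-exponents of the $g,h$ edges shift when the diamonds of ${}^|_|AR$ are re-indexed as a smaller Aztec rectangle, and confirm that $(eq^{i-1})g + hf$ rather than $eg+hf$ appears at step $i$. We would first carry this out explicitly for $m=2$ and compare with Figure~\ref{Megasandwich1}, then write the general inductive step.
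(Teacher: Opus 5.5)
Your route is the paper's route: $m$ nested applications of Sandwich Lemma~\ref{lm2}, each to the inner Aztec rectangle, with the peeled layers assembling into ${}_|^|K_{m,n}(\wt^{e,f}_{h,g}(q))$. That identification is fine.

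The gap is the step you flagged as the main obstacle and then filled by working backwards from the target. No Star Lemma or rescaling is needed. The Star Lemma could not do it anyway, since it rescales only the edges at a single vertex. The inner rectangle consists of rows $2,\dots,m$ of $AR_{m+1,n-1}$, so its $g,h$ labels are shifted. At step $i$ the core is therefore exactly $AR_{m-i+1,n-i+1}(\wt^{eq^{i-1},f}_{hq^{i-1},gq^{i-1}}(q))$. Sandwich Lemma~\ref{lm2} holds for arbitrary weights, so this step contributes
\[
\bigl(eq^{i-1}\cdot gq^{i-1}+hq^{i-1}\cdot f\bigr)^{m-i+1}q^{\binom{m-i+1}{2}}=q^{(i-1)(m-i+1)}\bigl(egq^{i-1}+hf\bigr)^{m-i+1}q^{\binom{m-i+1}{2}}.
\]
So the shift contributes $q^{(i-1)(m-i+1)}$ at step $i$, not $q^{i-1}$. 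Since $\sum_{i=1}^m(i-1)(m-i+1)=\binom{m+1}{3}$, the iteration gives the constant $\prod_{i=1}^m(egq^{i-1}+hf)^{m-i+1}\,q^{2\binom{m+1}{3}}$. This agrees with the stated $q^{\binom{m}{2}+\binom{m+1}{3}}$ only for $m\le 2$.

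This is not only a gap in your write-up. The paper's proof writes down the same product and reaches the printed exponent by the same slip, replacing $(q^{i-1})^{m-i+1}$ with $q^{i-1}$. A direct check confirms the discrepancy. Take $m=n=3$, $G$ and $H$ edgeless, and $h=0$.
\begin{itemize}
\item In $G\#{}_|^|K_{3,3}\#H$ the pendant edges at the $v_j,w_j$ are forced, and they force all of ${}_|^|K_{3,3}$ with weight $1$. So the right-hand side as stated equals $(eg)^3(egq)^2(egq^2)\,q^{7}=e^6g^6q^{11}$.
\item $AR_{3,3}$ has the perfect matching formed by the NW edges of the diamonds $(i,j)$ with $j\le i$ and the SE edges of the diamonds with $j\ge i$. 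Its weight is $e^6g^6q^{0+1+2+2+3+4}=e^6g^6q^{12}$.
\item All weights are monomials with positive coefficients, so this term cannot cancel.
\end{itemize}
So no bookkeeping can make the shifts total $q^{\binom{m}{2}}$. Carried out correctly, your argument proves the lemma with $q^{2\binom{m+1}{3}}$ in place of $q^{\binom{m}{2}+\binom{m+1}{3}}$.
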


\begin{proof}
The proof is illustrated in Figure \ref{Megasandwich1c}. We apply Sandwich Lemma 1 (Lemma \ref{lm2}) to replace the Aztec rectangle graph $AR_{m,n}(\wt_{h,g}^{e,f}(q))$ in the connected sum by the graph $^|_|AR_{m,n-1}(\wt_{h,g}^{eq,f}(q))$ (see picture (a)). Here, the diamond of label $i$ has edge-weights $e,f,gq^i,hq^i$, in clockwise order starting from the northwest edge. The latter graph can be partitioned into three parts: the top part is a row of $n-1$ upside-down triangles together with $n$ vertical edges, the middle part is the Aztec rectangle graph $AR_{m-1,n-1}(\wt_{hq,gq}^{eq,f}(q))$, and the bottom part is a row of $n-1$ triangles together with $n$ vertical edges (see right graph in picture (a); the diamond of label $i$' has edge-weights $eq,f,gq \cdot q^{i},hq \cdot q^{i}$. The triangles are weighted the same as in Figure \ref{Megasandwich1}.  We now apply Sandwich Lemma 1 again to the middle part, we replace it by $^|_|AR_{m-1,n-2}(\wt_{hq,gq}^{eq^2,f}(q))$ (see picture (b)). This new graph can again be partitioned into three parts. The middle part is exactly the weighted Aztec rectangle graph $AR_{m-2,n-2}(\wt_{hq^2,gq^2}^{eq^2,f}(q))$ (the diamond of label $i''$ has edge weights $eq^2,f,gq^2 \cdot q^{i},hq^2 \cdot q^{i}$). Repeating this procedure, we apply Sandwich Lemma 1 to the middle part at each step (see picture (c)). After $m$ applications, the original Aztec rectangle graph $AR_{m,n}(\wt_{h,g}^{e,f}(q))$ is replaced by the graph $^|_|K_{m,n}(\wt_{h,g}^{e,f}(q))$. 

Gathering the multiplicative factors of the $m$ transformations, we have
\begin{align}
	\M(G\# AR_{m,n}(\wt_{h,g}^{e,f}(q))\#H)&=\prod_{i=1}^{m} \left(eq^{i-1}\cdot gq^{i-1}+hq^{i-1}\cdot f\right)^{m-i+1} \cdot q^{\sum_{i=1}^{m}\binom{m-i+1}{2}}\notag \\
	&\quad\quad\times \M\left(G\# _| ^|K_{m,n}(\wt_{h,g}^{e,f}(q)) \#H\right)\\
	%%&=\prod_{i=1}^{m} \left(egq^{i-1}+h f\right)^{m-i+1} \cdot q^{\sum_{i=1}^m(i-1)}q^{\sum_{i=1}^{m}\binom{m-i+1}{2}}\notag \\
	%%&\quad\quad\times \M\left(G\# _| ^|K_{m,n}(\wt_{h,g}^{e,f}(q)) \#H\right)\\
	&=\prod_{i=1}^{m} (egq^{i-1}+hf)^{m-i+1} \cdot q^{\binom{m}{2}+\binom{m+1}{3}}\notag \\
	&\quad\quad\times \M\left(G\# _| ^|K_{m,n}(\wt_{h,g}^{e,f}(q)) \#H\right).
\end{align}
\end{proof}

\begin{figure}[ht]\centering
\resizebox{!}{20cm}{
\begin{picture}(0,0)%
\includegraphics{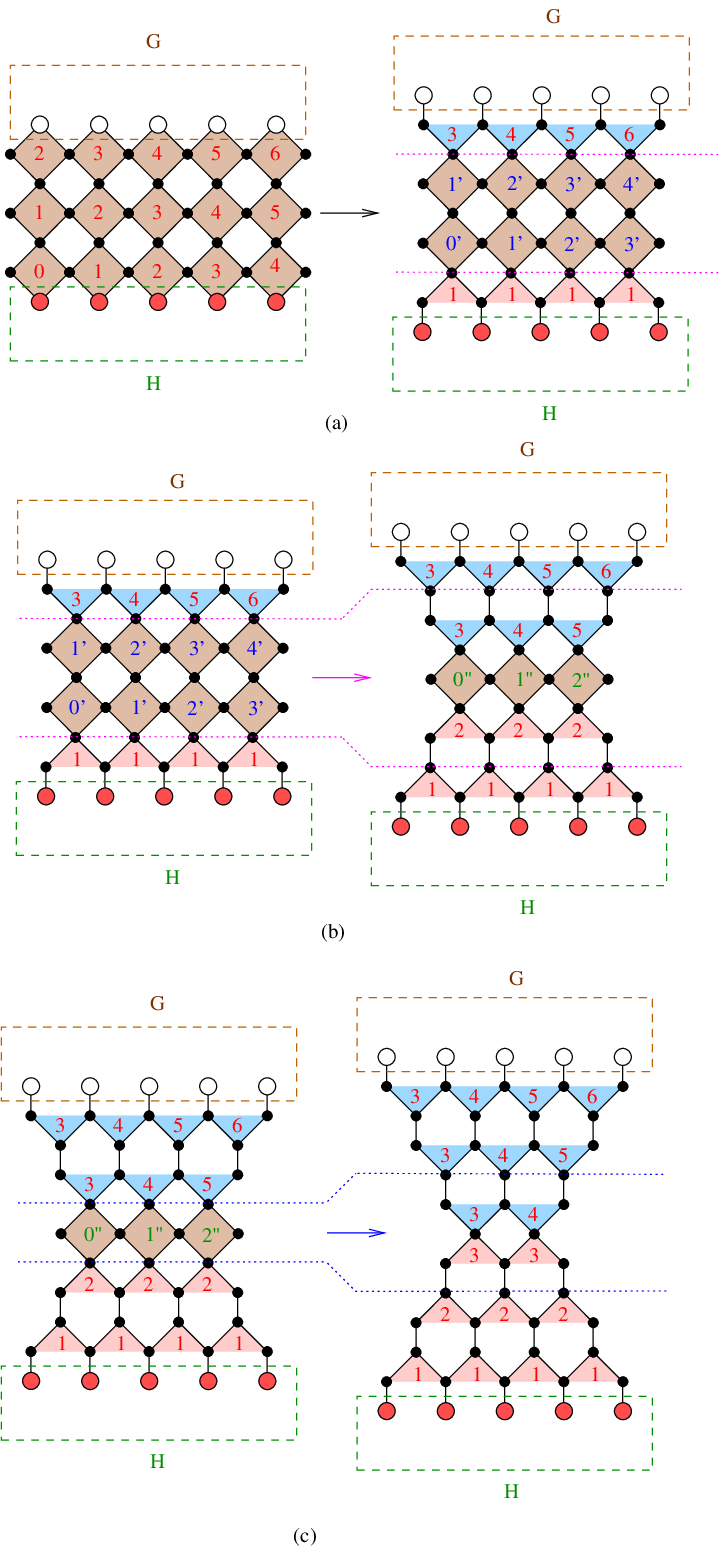}%
\end{picture}%
%
%  Created by WinFIG version 2024.2 
%  METADATA <version>1.0</version> 
%
\setlength{\unitlength}{3947sp}%
\begin{picture}(5758,12394)(2044,-15290)
%  METADATA <id>1451</id> 
\put(6622,-6994){\makebox(0,0)[lb]{\smash{\fontsize{10}{12}\usefont{T1}{ptm}{m}{n}{\color[rgb]{0,0,0}$v_4$}%
}}}
%  METADATA <id>1452</id> 
\put(7095,-6994){\makebox(0,0)[lb]{\smash{\fontsize{10}{12}\usefont{T1}{ptm}{m}{n}{\color[rgb]{0,0,0}$v_5$}%
}}}
%  METADATA <id>849</id> 
\put(5053,-14453){\makebox(0,0)[lb]{\smash{\fontsize{10}{12}\usefont{T1}{ptm}{m}{n}{\color[rgb]{0,0,0}$w_1$}%
}}}
%  METADATA <id>850</id> 
\put(5525,-14453){\makebox(0,0)[lb]{\smash{\fontsize{10}{12}\usefont{T1}{ptm}{m}{n}{\color[rgb]{0,0,0}$w_2$}%
}}}
%  METADATA <id>851</id> 
\put(5998,-14453){\makebox(0,0)[lb]{\smash{\fontsize{10}{12}\usefont{T1}{ptm}{m}{n}{\color[rgb]{0,0,0}$w_3$}%
}}}
%  METADATA <id>852</id> 
\put(6423,-14453){\makebox(0,0)[lb]{\smash{\fontsize{10}{12}\usefont{T1}{ptm}{m}{n}{\color[rgb]{0,0,0}$w_4$}%
}}}
%  METADATA <id>853</id> 
\put(6895,-14453){\makebox(0,0)[lb]{\smash{\fontsize{10}{12}\usefont{T1}{ptm}{m}{n}{\color[rgb]{0,0,0}$w_5$}%
}}}
%  METADATA <id>854</id> 
\put(5091,-11194){\makebox(0,0)[lb]{\smash{\fontsize{10}{12}\usefont{T1}{ptm}{m}{n}{\color[rgb]{0,0,0}$v_1$}%
}}}
%  METADATA <id>855</id> 
\put(5516,-11194){\makebox(0,0)[lb]{\smash{\fontsize{10}{12}\usefont{T1}{ptm}{m}{n}{\color[rgb]{0,0,0}$v_2$}%
}}}
%  METADATA <id>856</id> 
\put(6036,-11194){\makebox(0,0)[lb]{\smash{\fontsize{10}{12}\usefont{T1}{ptm}{m}{n}{\color[rgb]{0,0,0}$v_3$}%
}}}
%  METADATA <id>857</id> 
\put(6508,-11194){\makebox(0,0)[lb]{\smash{\fontsize{10}{12}\usefont{T1}{ptm}{m}{n}{\color[rgb]{0,0,0}$v_4$}%
}}}
%  METADATA <id>858</id> 
\put(6981,-11194){\makebox(0,0)[lb]{\smash{\fontsize{10}{12}\usefont{T1}{ptm}{m}{n}{\color[rgb]{0,0,0}$v_5$}%
}}}
%  METADATA <id>1443</id> 
\put(5167,-9777){\makebox(0,0)[lb]{\smash{\fontsize{10}{12}\usefont{T1}{ptm}{m}{n}{\color[rgb]{0,0,0}$w_1$}%
}}}
%  METADATA <id>1444</id> 
\put(5639,-9777){\makebox(0,0)[lb]{\smash{\fontsize{10}{12}\usefont{T1}{ptm}{m}{n}{\color[rgb]{0,0,0}$w_2$}%
}}}
%  METADATA <id>1445</id> 
\put(6112,-9777){\makebox(0,0)[lb]{\smash{\fontsize{10}{12}\usefont{T1}{ptm}{m}{n}{\color[rgb]{0,0,0}$w_3$}%
}}}
%  METADATA <id>1446</id> 
\put(6537,-9777){\makebox(0,0)[lb]{\smash{\fontsize{10}{12}\usefont{T1}{ptm}{m}{n}{\color[rgb]{0,0,0}$w_4$}%
}}}
%  METADATA <id>334</id> 
\put(2316,-3696){\makebox(0,0)[lb]{\smash{\fontsize{10}{12}\usefont{T1}{ptm}{m}{n}{\color[rgb]{0,0,0}$v_1$}%
}}}
%  METADATA <id>335</id> 
\put(2741,-3696){\makebox(0,0)[lb]{\smash{\fontsize{10}{12}\usefont{T1}{ptm}{m}{n}{\color[rgb]{0,0,0}$v_2$}%
}}}
%  METADATA <id>336</id> 
\put(3261,-3696){\makebox(0,0)[lb]{\smash{\fontsize{10}{12}\usefont{T1}{ptm}{m}{n}{\color[rgb]{0,0,0}$v_3$}%
}}}
%  METADATA <id>337</id> 
\put(3733,-3696){\makebox(0,0)[lb]{\smash{\fontsize{10}{12}\usefont{T1}{ptm}{m}{n}{\color[rgb]{0,0,0}$v_4$}%
}}}
%  METADATA <id>338</id> 
\put(4206,-3696){\makebox(0,0)[lb]{\smash{\fontsize{10}{12}\usefont{T1}{ptm}{m}{n}{\color[rgb]{0,0,0}$v_5$}%
}}}
%  METADATA <id>339</id> 
\put(2316,-5586){\makebox(0,0)[lb]{\smash{\fontsize{10}{12}\usefont{T1}{ptm}{m}{n}{\color[rgb]{0,0,0}$w_1$}%
}}}
%  METADATA <id>340</id> 
\put(2788,-5586){\makebox(0,0)[lb]{\smash{\fontsize{10}{12}\usefont{T1}{ptm}{m}{n}{\color[rgb]{0,0,0}$w_2$}%
}}}
%  METADATA <id>341</id> 
\put(3261,-5586){\makebox(0,0)[lb]{\smash{\fontsize{10}{12}\usefont{T1}{ptm}{m}{n}{\color[rgb]{0,0,0}$w_3$}%
}}}
%  METADATA <id>342</id> 
\put(3686,-5586){\makebox(0,0)[lb]{\smash{\fontsize{10}{12}\usefont{T1}{ptm}{m}{n}{\color[rgb]{0,0,0}$w_4$}%
}}}
%  METADATA <id>343</id> 
\put(4158,-5586){\makebox(0,0)[lb]{\smash{\fontsize{10}{12}\usefont{T1}{ptm}{m}{n}{\color[rgb]{0,0,0}$w_5$}%
}}}
%  METADATA <id>349</id> 
\put(5339,-5820){\makebox(0,0)[lb]{\smash{\fontsize{10}{12}\usefont{T1}{ptm}{m}{n}{\color[rgb]{0,0,0}$w_1$}%
}}}
%  METADATA <id>350</id> 
\put(5811,-5820){\makebox(0,0)[lb]{\smash{\fontsize{10}{12}\usefont{T1}{ptm}{m}{n}{\color[rgb]{0,0,0}$w_2$}%
}}}
%  METADATA <id>351</id> 
\put(6284,-5820){\makebox(0,0)[lb]{\smash{\fontsize{10}{12}\usefont{T1}{ptm}{m}{n}{\color[rgb]{0,0,0}$w_3$}%
}}}
%  METADATA <id>1341</id> 
\put(2328,-9535){\makebox(0,0)[lb]{\smash{\fontsize{10}{12}\usefont{T1}{ptm}{m}{n}{\color[rgb]{0,0,0}$w_1$}%
}}}
%  METADATA <id>1342</id> 
\put(2800,-9535){\makebox(0,0)[lb]{\smash{\fontsize{10}{12}\usefont{T1}{ptm}{m}{n}{\color[rgb]{0,0,0}$w_2$}%
}}}
%  METADATA <id>1343</id> 
\put(3273,-9535){\makebox(0,0)[lb]{\smash{\fontsize{10}{12}\usefont{T1}{ptm}{m}{n}{\color[rgb]{0,0,0}$w_3$}%
}}}
%  METADATA <id>1344</id> 
\put(3698,-9535){\makebox(0,0)[lb]{\smash{\fontsize{10}{12}\usefont{T1}{ptm}{m}{n}{\color[rgb]{0,0,0}$w_4$}%
}}}
%  METADATA <id>1345</id> 
\put(4170,-9535){\makebox(0,0)[lb]{\smash{\fontsize{10}{12}\usefont{T1}{ptm}{m}{n}{\color[rgb]{0,0,0}$w_5$}%
}}}
%  METADATA <id>1346</id> 
\put(2376,-7216){\makebox(0,0)[lb]{\smash{\fontsize{10}{12}\usefont{T1}{ptm}{m}{n}{\color[rgb]{0,0,0}$v_1$}%
}}}
%  METADATA <id>1347</id> 
\put(2801,-7216){\makebox(0,0)[lb]{\smash{\fontsize{10}{12}\usefont{T1}{ptm}{m}{n}{\color[rgb]{0,0,0}$v_2$}%
}}}
%  METADATA <id>1348</id> 
\put(3321,-7216){\makebox(0,0)[lb]{\smash{\fontsize{10}{12}\usefont{T1}{ptm}{m}{n}{\color[rgb]{0,0,0}$v_3$}%
}}}
%  METADATA <id>1349</id> 
\put(3793,-7216){\makebox(0,0)[lb]{\smash{\fontsize{10}{12}\usefont{T1}{ptm}{m}{n}{\color[rgb]{0,0,0}$v_4$}%
}}}
%  METADATA <id>1350</id> 
\put(4266,-7216){\makebox(0,0)[lb]{\smash{\fontsize{10}{12}\usefont{T1}{ptm}{m}{n}{\color[rgb]{0,0,0}$v_5$}%
}}}
%  METADATA <id>352</id> 
\put(6709,-5820){\makebox(0,0)[lb]{\smash{\fontsize{10}{12}\usefont{T1}{ptm}{m}{n}{\color[rgb]{0,0,0}$w_4$}%
}}}
%  METADATA <id>353</id> 
\put(7181,-5820){\makebox(0,0)[lb]{\smash{\fontsize{10}{12}\usefont{T1}{ptm}{m}{n}{\color[rgb]{0,0,0}$w_5$}%
}}}
%  METADATA <id>359</id> 
\put(5387,-3501){\makebox(0,0)[lb]{\smash{\fontsize{10}{12}\usefont{T1}{ptm}{m}{n}{\color[rgb]{0,0,0}$v_1$}%
}}}
%  METADATA <id>360</id> 
\put(5812,-3501){\makebox(0,0)[lb]{\smash{\fontsize{10}{12}\usefont{T1}{ptm}{m}{n}{\color[rgb]{0,0,0}$v_2$}%
}}}
%  METADATA <id>361</id> 
\put(6332,-3501){\makebox(0,0)[lb]{\smash{\fontsize{10}{12}\usefont{T1}{ptm}{m}{n}{\color[rgb]{0,0,0}$v_3$}%
}}}
%  METADATA <id>362</id> 
\put(6804,-3501){\makebox(0,0)[lb]{\smash{\fontsize{10}{12}\usefont{T1}{ptm}{m}{n}{\color[rgb]{0,0,0}$v_4$}%
}}}
%  METADATA <id>363</id> 
\put(7277,-3501){\makebox(0,0)[lb]{\smash{\fontsize{10}{12}\usefont{T1}{ptm}{m}{n}{\color[rgb]{0,0,0}$v_5$}%
}}}
%  METADATA <id>1447</id> 
\put(7009,-9777){\makebox(0,0)[lb]{\smash{\fontsize{10}{12}\usefont{T1}{ptm}{m}{n}{\color[rgb]{0,0,0}$w_5$}%
}}}
%  METADATA <id>1448</id> 
\put(5205,-6994){\makebox(0,0)[lb]{\smash{\fontsize{10}{12}\usefont{T1}{ptm}{m}{n}{\color[rgb]{0,0,0}$v_1$}%
}}}
%  METADATA <id>1449</id> 
\put(5630,-6994){\makebox(0,0)[lb]{\smash{\fontsize{10}{12}\usefont{T1}{ptm}{m}{n}{\color[rgb]{0,0,0}$v_2$}%
}}}
%  METADATA <id>1450</id> 
\put(6150,-6994){\makebox(0,0)[lb]{\smash{\fontsize{10}{12}\usefont{T1}{ptm}{m}{n}{\color[rgb]{0,0,0}$v_3$}%
}}}
%  METADATA <id>603</id> 
\put(2208,-14212){\makebox(0,0)[lb]{\smash{\fontsize{10}{12}\usefont{T1}{ptm}{m}{n}{\color[rgb]{0,0,0}$w_1$}%
}}}
%  METADATA <id>604</id> 
\put(2680,-14212){\makebox(0,0)[lb]{\smash{\fontsize{10}{12}\usefont{T1}{ptm}{m}{n}{\color[rgb]{0,0,0}$w_2$}%
}}}
%  METADATA <id>605</id> 
\put(3153,-14212){\makebox(0,0)[lb]{\smash{\fontsize{10}{12}\usefont{T1}{ptm}{m}{n}{\color[rgb]{0,0,0}$w_3$}%
}}}
%  METADATA <id>606</id> 
\put(3578,-14212){\makebox(0,0)[lb]{\smash{\fontsize{10}{12}\usefont{T1}{ptm}{m}{n}{\color[rgb]{0,0,0}$w_4$}%
}}}
%  METADATA <id>607</id> 
\put(4050,-14212){\makebox(0,0)[lb]{\smash{\fontsize{10}{12}\usefont{T1}{ptm}{m}{n}{\color[rgb]{0,0,0}$w_5$}%
}}}
%  METADATA <id>608</id> 
\put(2246,-11429){\makebox(0,0)[lb]{\smash{\fontsize{10}{12}\usefont{T1}{ptm}{m}{n}{\color[rgb]{0,0,0}$v_1$}%
}}}
%  METADATA <id>609</id> 
\put(2671,-11429){\makebox(0,0)[lb]{\smash{\fontsize{10}{12}\usefont{T1}{ptm}{m}{n}{\color[rgb]{0,0,0}$v_2$}%
}}}
%  METADATA <id>610</id> 
\put(3191,-11429){\makebox(0,0)[lb]{\smash{\fontsize{10}{12}\usefont{T1}{ptm}{m}{n}{\color[rgb]{0,0,0}$v_3$}%
}}}
%  METADATA <id>611</id> 
\put(3663,-11429){\makebox(0,0)[lb]{\smash{\fontsize{10}{12}\usefont{T1}{ptm}{m}{n}{\color[rgb]{0,0,0}$v_4$}%
}}}
%  METADATA <id>612</id> 
\put(4136,-11429){\makebox(0,0)[lb]{\smash{\fontsize{10}{12}\usefont{T1}{ptm}{m}{n}{\color[rgb]{0,0,0}$v_5$}%
}}}
\end{picture}%
}
\caption{Illustration of the proof of Mega-Sandwich Lemma 1.}\label{Megasandwich1c}
\end{figure}

Next, we assign weights to the edges of $K_{m,n}$ differently (see the right graph in Figure \ref{Honey}). In particular, we also view $K_{m,n}$ as a union of $m$ rows of bottomless triangles in the bottom part, and $m$ rows of upside-down triangles in the top part. For the top part, the triangles in the $i$th row from the top (the one labeled by $i'$) have the right edge weighted by $f$ and the left edge weighted by $\dfrac{e}{q^i}$.  For the bottom part, the $j$-th triangle from left in the $i$-th row from the bottom (labeled by $(i+j-2)$') has the left edge weighted by $gq^{i+j-2}$ and the right edge weighted by $hq^{i+j-2}$. All vertical edges have weight $1$. Denote by $K_{m,n}(\overline{\wt}_{h,g}^{e,f}(q))$ this weighted version of $K_{m,n}$.

\begin{figure}[ht]\centering
\resizebox{!}{10cm}{
\begin{picture}(0,0)%
\includegraphics{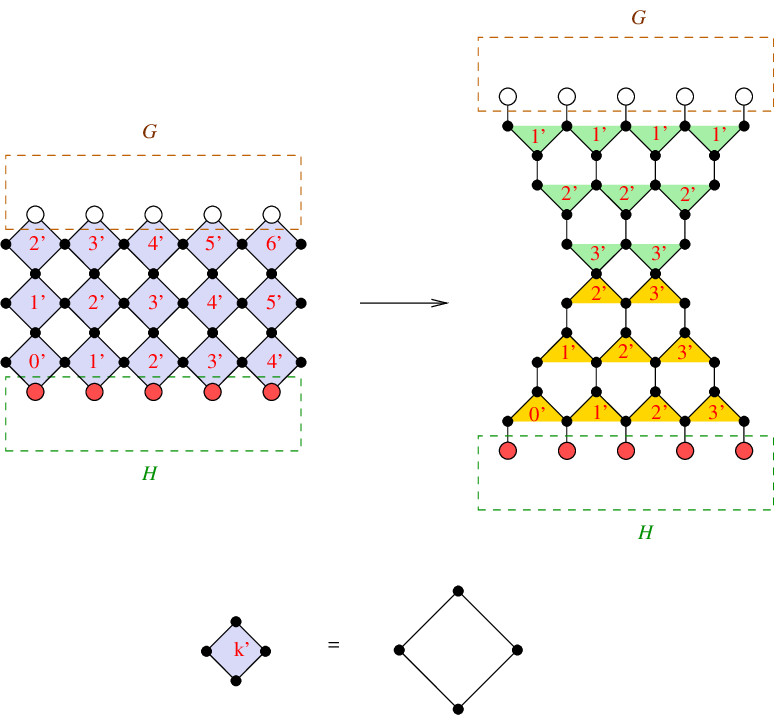}%
\end{picture}%
%
%  Created by WinFIG version 2024.2 
%  METADATA <version>1.0</version> 
%
\setlength{\unitlength}{3947sp}%
\begin{picture}(6200,5707)(2081,-7890)
%  METADATA <id>334</id> 
\put(2316,-3696){\makebox(0,0)[lb]{\smash{\fontsize{10}{12}\usefont{T1}{ptm}{m}{n}{\color[rgb]{0,0,0}$v_1$}%
}}}
%  METADATA <id>335</id> 
\put(2741,-3696){\makebox(0,0)[lb]{\smash{\fontsize{10}{12}\usefont{T1}{ptm}{m}{n}{\color[rgb]{0,0,0}$v_2$}%
}}}
%  METADATA <id>336</id> 
\put(3261,-3696){\makebox(0,0)[lb]{\smash{\fontsize{10}{12}\usefont{T1}{ptm}{m}{n}{\color[rgb]{0,0,0}$v_3$}%
}}}
%  METADATA <id>337</id> 
\put(3733,-3696){\makebox(0,0)[lb]{\smash{\fontsize{10}{12}\usefont{T1}{ptm}{m}{n}{\color[rgb]{0,0,0}$v_4$}%
}}}
%  METADATA <id>338</id> 
\put(4206,-3696){\makebox(0,0)[lb]{\smash{\fontsize{10}{12}\usefont{T1}{ptm}{m}{n}{\color[rgb]{0,0,0}$v_5$}%
}}}
%  METADATA <id>339</id> 
\put(2316,-5586){\makebox(0,0)[lb]{\smash{\fontsize{10}{12}\usefont{T1}{ptm}{m}{n}{\color[rgb]{0,0,0}$w_1$}%
}}}
%  METADATA <id>340</id> 
\put(2788,-5586){\makebox(0,0)[lb]{\smash{\fontsize{10}{12}\usefont{T1}{ptm}{m}{n}{\color[rgb]{0,0,0}$w_2$}%
}}}
%  METADATA <id>341</id> 
\put(3261,-5586){\makebox(0,0)[lb]{\smash{\fontsize{10}{12}\usefont{T1}{ptm}{m}{n}{\color[rgb]{0,0,0}$w_3$}%
}}}
%  METADATA <id>342</id> 
\put(3686,-5586){\makebox(0,0)[lb]{\smash{\fontsize{10}{12}\usefont{T1}{ptm}{m}{n}{\color[rgb]{0,0,0}$w_4$}%
}}}
%  METADATA <id>343</id> 
\put(4158,-5586){\makebox(0,0)[lb]{\smash{\fontsize{10}{12}\usefont{T1}{ptm}{m}{n}{\color[rgb]{0,0,0}$w_5$}%
}}}
%  METADATA <id>349</id> 
\put(6059,-6050){\makebox(0,0)[lb]{\smash{\fontsize{10}{12}\usefont{T1}{ptm}{m}{n}{\color[rgb]{0,0,0}$w_1$}%
}}}
%  METADATA <id>350</id> 
\put(6531,-6050){\makebox(0,0)[lb]{\smash{\fontsize{10}{12}\usefont{T1}{ptm}{m}{n}{\color[rgb]{0,0,0}$w_2$}%
}}}
%  METADATA <id>351</id> 
\put(7004,-6050){\makebox(0,0)[lb]{\smash{\fontsize{10}{12}\usefont{T1}{ptm}{m}{n}{\color[rgb]{0,0,0}$w_3$}%
}}}
%  METADATA <id>352</id> 
\put(7429,-6050){\makebox(0,0)[lb]{\smash{\fontsize{10}{12}\usefont{T1}{ptm}{m}{n}{\color[rgb]{0,0,0}$w_4$}%
}}}
%  METADATA <id>353</id> 
\put(7901,-6050){\makebox(0,0)[lb]{\smash{\fontsize{10}{12}\usefont{T1}{ptm}{m}{n}{\color[rgb]{0,0,0}$w_5$}%
}}}
%  METADATA <id>359</id> 
\put(6097,-2791){\makebox(0,0)[lb]{\smash{\fontsize{10}{12}\usefont{T1}{ptm}{m}{n}{\color[rgb]{0,0,0}$v_1$}%
}}}
%  METADATA <id>360</id> 
\put(6522,-2791){\makebox(0,0)[lb]{\smash{\fontsize{10}{12}\usefont{T1}{ptm}{m}{n}{\color[rgb]{0,0,0}$v_2$}%
}}}
%  METADATA <id>361</id> 
\put(7042,-2791){\makebox(0,0)[lb]{\smash{\fontsize{10}{12}\usefont{T1}{ptm}{m}{n}{\color[rgb]{0,0,0}$v_3$}%
}}}
%  METADATA <id>362</id> 
\put(7514,-2791){\makebox(0,0)[lb]{\smash{\fontsize{10}{12}\usefont{T1}{ptm}{m}{n}{\color[rgb]{0,0,0}$v_4$}%
}}}
%  METADATA <id>363</id> 
\put(7987,-2791){\makebox(0,0)[lb]{\smash{\fontsize{10}{12}\usefont{T1}{ptm}{m}{n}{\color[rgb]{0,0,0}$v_5$}%
}}}
%  METADATA <id>296</id> 
\put(5243,-7048){\makebox(0,0)[lb]{\smash{\fontsize{10}{12}\normalfont\itshape {\color[rgb]{0,0,1}$gq^k$}%
}}}
%  METADATA <id>297</id> 
\put(6094,-7095){\makebox(0,0)[lb]{\smash{\fontsize{10}{12}\normalfont\itshape {\color[rgb]{0,0,1}$hq^k$}%
}}}
%  METADATA <id>294</id> 
\put(5999,-7709){\makebox(0,0)[lb]{\smash{\fontsize{10}{12}\normalfont\itshape {\color[rgb]{0,0,1}$e$}%
}}}
%  METADATA <id>295</id> 
\put(5338,-7756){\makebox(0,0)[lb]{\smash{\fontsize{10}{12}\normalfont\itshape {\color[rgb]{0,0,1}$f$}%
}}}
\end{picture}%
}
\caption{The transformation in Mega-Sandwich Lemma 2.}\label{Megasandwich2}
\end{figure}

\begin{lem}[Mega-Sandwich Lemma 2]\label{megalem2}
	Let $G$ be a graph with an ordered vertex subset $\{v_1,v_2,...,v_n\}$, and let $H$ be another graph with an ordered vertex set $\{w_1,w_2,\dots, w_n\}$. 
	Then
 \begin{align}
	\M(G\# AR_{m,n}(\overline{\wt}_{h,g}^{e,f}(q))\#H)=&\prod_{i=1}^{m} (eg+hfq^{i-1})^{m-i+1} \cdot q^{(n-1)m(m+1)/2}\notag \\
	&\times \M\left(G\# _| ^|K_{m,n}(\overline{\wt}_{h,g}^{e,f}(q)) \#H\right),
\end{align}
	where $_| ^|K_{m,n}(\overline{\wt}_{h,g}^{e,f}(q))$ is the graph obtained by appending a vertical edges of weight $1$ to each vertex on the top and bottom of $K_{m,n}(\overline{\wt}_{h,g}^{e,f}(q))$. See Figure \ref{Megasandwich2}.  %The first connected sum in each side of the identity is taken along the ordered set  $\{v_1,v_2,...,v_n\}$ of $G$ and along the $n$ topmost of the Aztec rectangle graph, and the second connected sum is taken along the ordered set  $\{w_1,w_2,...,w_n\}$ of $H$ and along the $n$ bottommost of the Aztec rectangle graph.	
\end{lem}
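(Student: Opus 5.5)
The plan is to mirror the proof of Mega-Sandwich Lemma 1 (Lemma \ref{megalem1}), using Sandwich Lemma 2 (Lemma \ref{lem3}) in place of Sandwich Lemma 1. We apply Lemma \ref{lem3} to $AR_{m,n}(\overline{\wt}_{h,g}^{e,f}(q))$ inside the double connected sum. This replaces it by $^|_|AR_{m,n-1}(\overline{\wt}_{h,g}^{e/q,f}(q))$ and produces the factor $(eg+hf)^m q^{m(n-1)+\binom{m}{2}}$.

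The new graph splits into three parts. The top is a row of $n-1$ upside-down triangles plus $n$ vertical edges, whose left and right edges carry $e/q$ and $f$. The bottom is a row of $n-1$ triangles whose edges carry $gq^{j-1}, hq^{j-1}$. The middle is an Aztec rectangle graph with $m-1$ rows and $n-1$ columns. The key bookkeeping point is to identify the middle part's weights. Because of the shift from the removed top/bottom vertices, its diamond in row $i$, column $j$ has weights $gq\cdot q^{i+j-2}$, $hq\cdot q^{i+j-2}$, $e/q$, $f$. So it is $AR_{m-1,n-1}(\overline{\wt}_{hq,gq}^{e/q,f}(q))$, and the upper and lower portions play the roles of $G$ and $H$ for the next step. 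I would verify this by reading off the picture, as was done for Figure \ref{Megasandwich1c}.

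Next we iterate. At the $k$-th step ($k=1,\dots,m$) the middle graph is $AR_{m-k+1,n-k+1}(\overline{\wt}_{hq^{k-1},gq^{k-1}}^{e/q^{k-1},f}(q))$. Lemma \ref{lem3} then contributes
\[
\bigl(e q^{-(k-1)}\, g q^{k-1} + h q^{k-1} f\bigr)^{m-k+1}\, q^{(m-k+1)(n-k)+\binom{m-k+1}{2}} = (eg+hfq^{k-1})^{m-k+1}\, q^{(m-k+1)(n-k)+\binom{m-k+1}{2}}.
\]
After $m$ steps the Aztec rectangle has been consumed. The peeled rows assemble into $_|^|K_{m,n}(\overline{\wt}_{h,g}^{e,f}(q))$: the $k$-th top row carries $e/q^k, f$, and the bottom rows carry $gq^{i+j-2}, hq^{i+j-2}$, matching the definition.

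It remains to total the $q$-exponent. Substituting $r=m-k+1$ gives
\[
\sum_{r=1}^m \Bigl[r(n-m-1+r)+\tbinom{r}{2}\Bigr].
\]
We then compare this with $(n-1)m(m+1)/2$. I expect this comparison to be the main obstacle, since the raw sum is not obviously equal to the stated exponent. The discrepancy must be absorbed either by a Star Lemma (Lemma \ref{starlem}) rescaling at the leftover vertices or by a more careful reading of which $q$-shift the peeled rows receive. I would first check the case $m=1$, where the exponent is $n-1$ directly from Lemma \ref{lem3}, and then $m=2$, and adjust the bookkeeping of the middle weights until the exponent matches $(n-1)\binom{m+1}{2}$.

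A natural candidate is that the middle graph's weights are $e/q$, $f$, $g q^{i+j-2}$, $hq^{i+j-2}$ re-indexed so that column indices shift but row indices do not. In that case the $k$-th factor is $q^{(m-k+1)(n-1)}$ times $(eg+hfq^{k-1})^{m-k+1}$, which sums exactly to $(n-1)\binom{m+1}{2}$.
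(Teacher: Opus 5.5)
Your setup is exactly the paper's: $m$ successive applications of Lemma~\ref{lem3}, with the $k$-th middle graph $AR_{m-k+1,n-k+1}(\overline{\wt}^{e/q^{k-1},f}_{hq^{k-1},gq^{k-1}}(q))$. The $k$-th factor is $(eg+hfq^{k-1})^{m-k+1}q^{(m-k+1)(n-k)+\binom{m-k+1}{2}}$, which is precisely the factor the paper records. The gap is in your last step. You never verify the exponent, and you predict a discrepancy that does not exist. The sum you wrote down already equals the stated exponent:
\[
\sum_{r=1}^{m}\Bigl[r(n-m-1)+r^{2}+\tbinom{r}{2}\Bigr]=(n-m-1)\binom{m+1}{2}+\frac{m(m+1)(2m+1)}{6}+\frac{(m+1)m(m-1)}{6}=(n-1)\binom{m+1}{2},
\]
since the last two terms add up to $m\binom{m+1}{2}$. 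The checks you proposed would have shown this at once: for $m=2$ the two steps contribute $(n-2)+(2n-1)=3(n-1)$. Adding this computation completes the proof, and it is then the paper's proof.

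The repairs you suggest should be discarded, because they would make the argument wrong. A Star Lemma rescaling introduces extra factors and changes edge weights. The final graph would then no longer be $_|^|K_{m,n}(\overline{\wt}^{e,f}_{h,g}(q))$, and the prefactor would change. Your ``natural candidate'' is false as a statement about the construction. Lemma~\ref{lem3} applied to the $k$-th middle graph (whose weights you had identified correctly) gives $q^{(m-k+1)(n-k)+\binom{m-k+1}{2}}$, not $q^{(m-k+1)(n-1)}$. Already at $k=1$ these differ by $q^{\binom{m}{2}}$. In general the two exponents differ by $(m-k+1)(m-3k+2)/2$ at step $k$, and these differences cancel only in the total. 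So that candidate reaches the right answer through an incorrect per-step claim.
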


\begin{proof}
Similarly to Lemma \ref{megalem1}, we  apply $m$ times Lemma \ref{lem3} to transform $AR_{m,n}(\overline{\wt}_{h,g}^{e,f}(q))$ into $_| ^|K_{m,n}(\overline{\wt}_{h,g}^{e,f}(q))$. The multiplicative factor arising in the $i$-th application is \[\left(\frac{e}{q^{i-1}}\cdot gq^{i-1}+hq^{i-1}\cdot f\right)^{m-i+1} \cdot q^{(m-i+1)(n-i)+\binom{m-i+1}{2}}.\] Collecting all the factors, we obtain
\begin{align}
	\M(G\# AR_{m,n}(\overline{\wt}_{h,g}^{e,f}(q))\#H)&=\prod_{i=1}^{m} (eg+hfq^{i-1})^{m-i+1} \cdot q^{\sum_{i=1}^m(m-i+1)(n-i)+\sum_{i=1}^{m}\binom{m-i+1}{2}}\notag \\
	&\quad\quad\times \M\left(G\# _| ^|K_{m,n}(\overline{\wt}_{h,g}^{e,f}(q)) \#H\right)\\
	&=\prod_{i=1}^{m} (eg+hfq^{i-1})^{m-i+1} \cdot q^{(n-1)m(m+1)/2}\notag \\
	&\quad\quad\times \M\left(G\# _| ^|K_{m,n}(\overline{\wt}_{h,g}^{e,f}(q)) \#H\right).
\end{align}
\end{proof}

\subsection{Schur function identities}

Given $k$ integers $\lambda_1\geq \lambda_2\geq\cdots \geq\lambda_k\geq 0$, a \emph{plane partition} of shape $(\lambda_1,\lambda_2,\dotsc,\lambda_k)$ is an array of nonnegative integers of the form 
\begin{center}
\begin{tabular}{rccccccccc}
$n_{1,1}$   &$n_{1,2}$                 &$n_{1,3}$               & $\dotsc$               &  $\dotsc$                        & $\dotsc$                            &   $n_{1,\lambda_1}$ \\\noalign{\smallskip\smallskip}
$n_{2,1}$   &  $n_{2,2}$              & $n_{2,3}$             &  $\dotsc$               & $\dotsc$                        &         $n_{2,\lambda_2}$&          \\\noalign{\smallskip\smallskip}
$\vdots$    &       $\vdots$            & $\vdots$                &        $\vdots$         &     $\vdots$                &    &              \\\noalign{\smallskip\smallskip}
 $n_{k,1}$  &  $n_{k,2}$               & $n_{k,3}$              &     $\dotsc$             &   $n_{k,\lambda_k}$ &                                          &           \\\noalign{\smallskip\smallskip}
\end{tabular},
\end{center}
where the entries are weakly decreasing across the rows and down the columns. A \emph{column-strict plane partition} is a plane partition whose entries in each column are strictly decreasing. We refer the reader to \cite{Stanley} for properties of column-strict plane partitions.

\begin{figure}[ht]\centering
		\includegraphics[width=7cm]{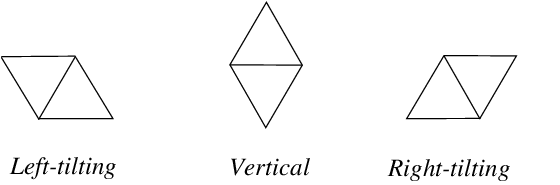}
		\caption{The orientations of the lozenges.}
		\label{rhombustype}
\end{figure}

We are interested in the lozenge tilings of the semi-hexagon $\mathcal{SH}_{a,b}$ with $a$ up-pointing unit triangles removed from the base; these removed triangles are called \emph{dents}. Assume that the positions of the dents are $s_1,s_2,\dots,s_a$, where $1\leq s_1<s_2<\dotsc <s_a\leq a+b$. We denote by $\mathcal{SH}=\mathcal{SH}_{a,b}(s_1,\dotsc,s_a)$ the semi-hexagon with dents (see Figure \ref{semihex}(a) for an example of a lozenge tiling of $\mathcal{SH}_{6,4}(1,3,6,7,8,10)$).
We now encode each lozenge tiling $L$ of $\mathcal{SH}$ as an $a$-tuple of  disjoint lozenge paths $(Q_1,\dotsc,Q_{a})$. Each lozenge path $Q_i$ consists of \emph{vertical} and \emph{right-tilting lozenges} connecting the $i$-th node (counted from the top) on the northwest side and the $i$-th node (counted from the right) along the base (illustrated in Figure \ref{semihex}(c); see also Figure \ref{rhombustype} for the three different orientations of the lozenges). The dotted lines indicate these paths.

For any set of positive integers $X=\{x_1,x_2,\dots,x_n\}$, with $x_1<x_2<\cdots<x_n$,  we denote by $\lambda(X)$ the integer partition $(x_n-n, x_{n-1}-(n-1), \dots ,x_1-1)$. (Here we allow zero parts in an integer partition.) Next, we give each right-tilting lozenge a weight $q^{k}$, where $k\cdot \frac{\sqrt{3}}{2}$ is the distance from its top to the base of the semi-hexagon, as in Figure \ref{semihex}(c) (all other lozenges have weight $1$). The exponents of $q$ along the path $Q_i$ give the entries of the $i$-th row of a \emph{column-strict plane partition} of shape $\lambda(s_1,s_2,\dots,s_a):=(s_a-a, s_{a-1}-a+1, \dotsc , s_1-1)$ with positive entries at most $a$ (see Figure \ref{semihex}(d)). We note that the path $Q_6$ in Figure \ref{semihex}(c) is degenerate (it has no lozenge) and that the vertical interval at the bottom of the plane partition in Figure \ref{semihex}(d) represents a row of length $0$. It is easy to verify that the above correspondence yields a bijection. Moreover, the weight of the lozenge tiling $L$ of $\mathcal{SH}_{a,b}(s_1,\dotsc,s_a)$ is exactly $q^{|\pi_L|}$, where $\pi_L$ is the column-strict plane partition corresponding  to $L$ and where $|\pi_L|$ is the sum of all entries of $\pi_L$.  Summing over all lozenge tilings $L$ of $\mathcal{SH}_{a,b}(s_1,\dotsc,s_a)$, we obtain the following Schur function identity:
\begin{align}\label{Gp}
\T(\mathcal{SH}_{a,b}(s_1,\dotsc,s_a))&=\sum_{\pi}q^{|\pi|}=s_{\lambda(s_1,s_2,\dots,s_a)}(q,q^2,\dots,q^{a})\notag\\
&=q^{\sum_{i=1}^{a}(s_i-i)}\prod_{1\leq i<j\leq a}\frac{q^{s_j}-q^{s_i}}{q^{j}-q^{i}},
\end{align}
where the sum after the first equal sign is taken over all column-strict plane partitions $\pi$ of shape $\lambda(s_1,s_2,\dots,s_a)=(s_a-a, s_{a-1}-a+1, \dotsc, s_1-1)$ with positive entries at most $a$ (see, e.g., \cite[pp.375]{Stanley}). We note that a lozenge tiling $L$ of the semi-hexagon can also be encoded as a family of $b$ lozenge paths $(P_1,P_2,\dots,P_b)$ connecting the top and the bottom of the region, as shown in Figure \ref{semihex}(b). In this case, the exponents of $q$ along the path $P_i$ correspond to the entries in column $i$ of the column-strict plane partition $\pi_L$.

\begin{figure}[ht]\centering
		\includegraphics[width=15 cm]{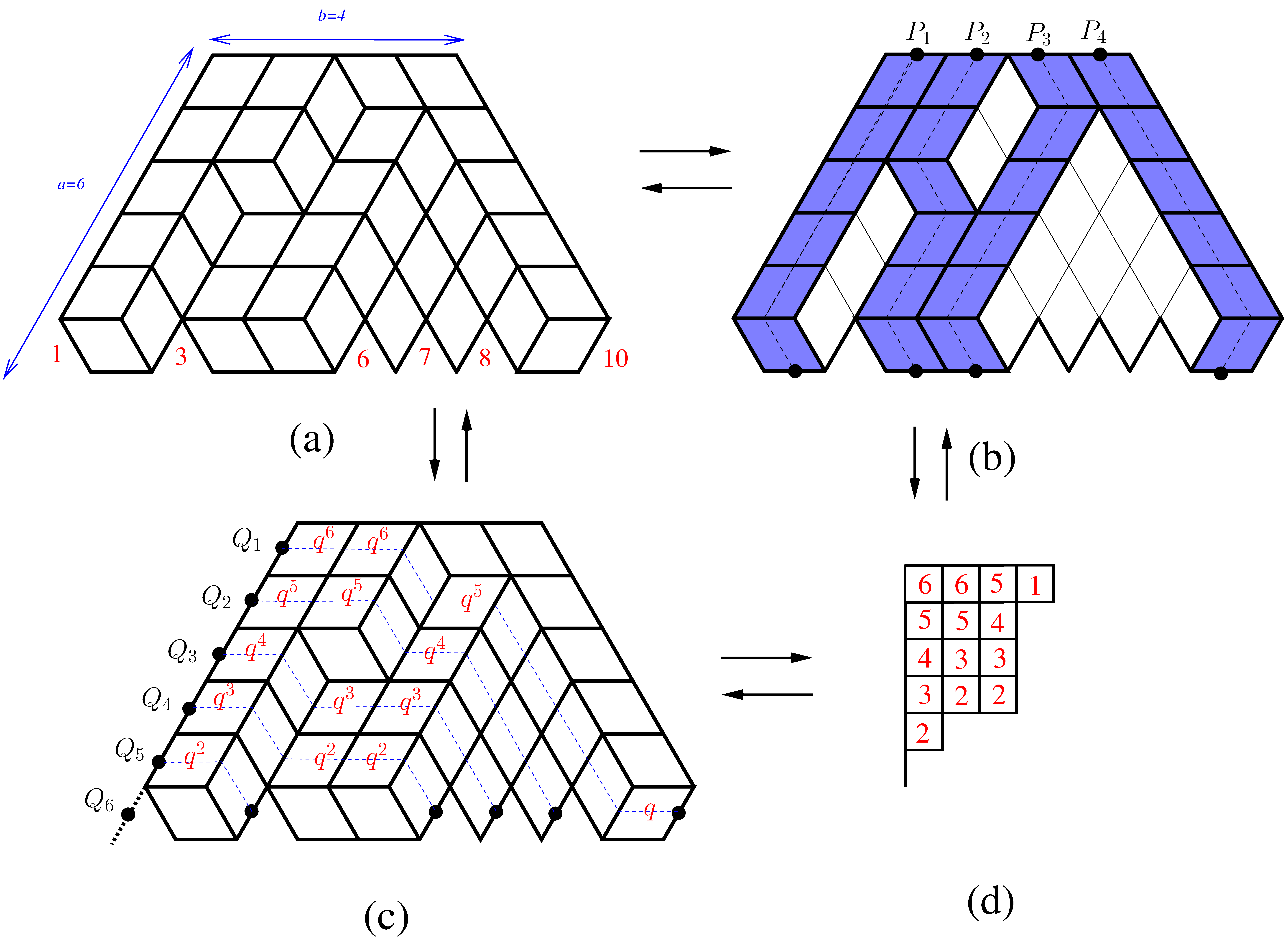}
		\caption{The correspondence between lozenge tilings of a semihexagon $\mathcal{SH}_{a,b}$ with dents and column strict plane partition of shape .}
		\label{semihex}
\end{figure}

We consider the following two variants of the weighted lozenge tilings. Let $ x$ and $ y$ be two positive real numbers. We assign weights to lozenges of the semi-hexagon $\mathcal{SH}_{a,b}(s_1,s_2,\dots,s_a)$ as follows. Assign weight $xq^{i}$ to each right-tilting lozenge in the $i$-th row counted from the bottom, weight $y$ to each left-tilting lozenge, and weight $1$ to each vertical lozenge (see Figure \ref{semihex2}(a)). Denote the resulting weighted semi-hexagon by $\mathcal{SH}_{a,b}(\wt_{x,y}(q))$. The weight assignment in Figure \ref{semihex} is exactly the special case $x=y=1$.

\begin{figure}[ht]\centering
\resizebox{15cm}{!}{
\begin{picture}(0,0)%
\includegraphics{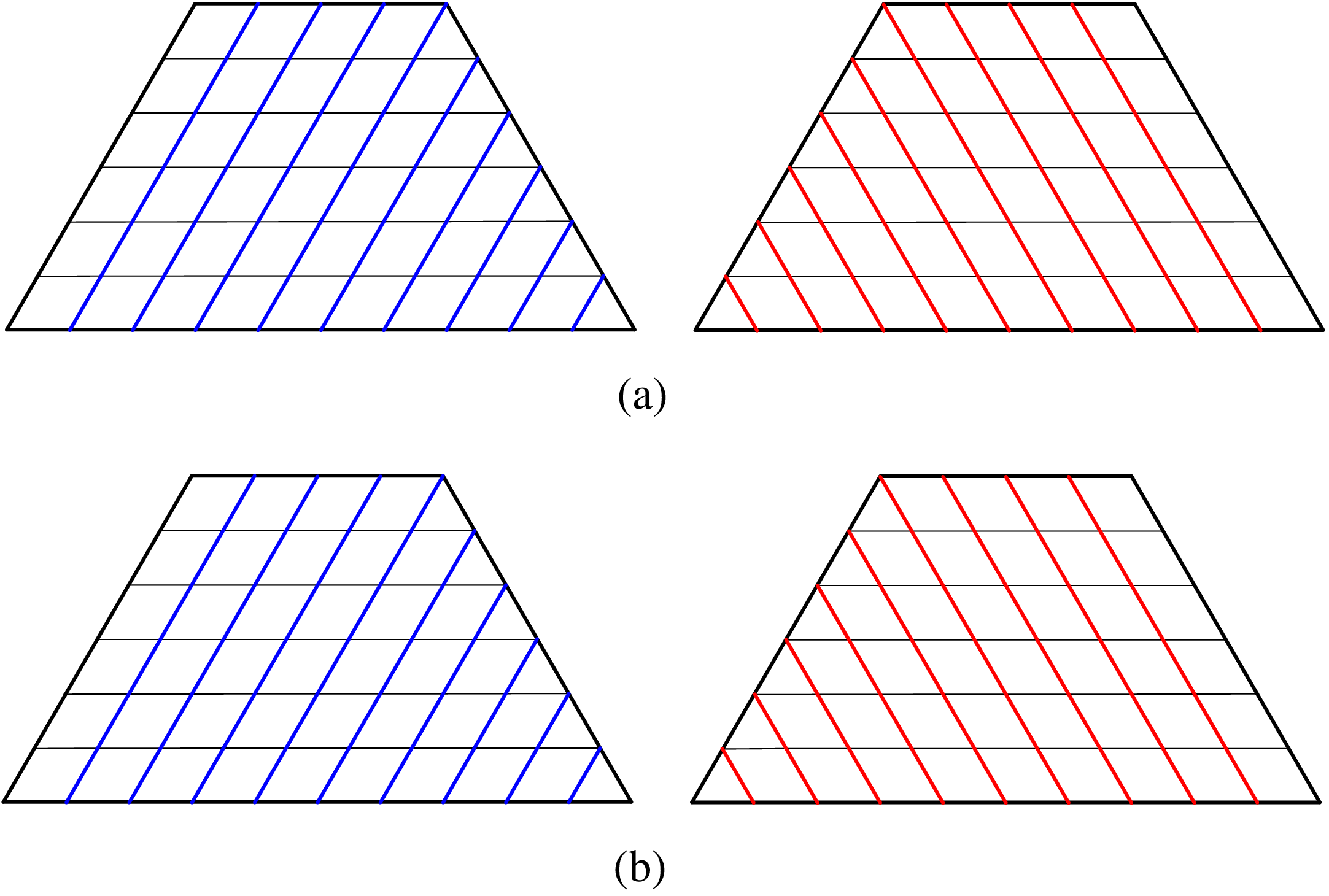}%
\end{picture}%
%
%  Created by WinFIG version 2024.2 
%  METADATA <version>1.0</version> 
%
\setlength{\unitlength}{3947sp}%
\begin{picture}(17310,11677)(333,-11664)
%  METADATA <id>271</id> 
\put(14712,-6620){\makebox(0,0)[lb]{\smash{\fontsize{22}{26.4}\normalfont\itshape {\color[rgb]{0,0,0}$yq^4$}%
}}}
%  METADATA <id>235</id> 
\put(11854,-10200){\makebox(0,0)[lb]{\smash{\fontsize{22}{26.4}\normalfont\itshape {\color[rgb]{0,0,0}$yq^3$}%
}}}
%  METADATA <id>236</id> 
\put(12672,-10185){\makebox(0,0)[lb]{\smash{\fontsize{22}{26.4}\normalfont\itshape {\color[rgb]{0,0,0}$yq^4$}%
}}}
%  METADATA <id>237</id> 
\put(13467,-10170){\makebox(0,0)[lb]{\smash{\fontsize{22}{26.4}\normalfont\itshape {\color[rgb]{0,0,0}$yq^5$}%
}}}
%  METADATA <id>238</id> 
\put(14292,-10177){\makebox(0,0)[lb]{\smash{\fontsize{22}{26.4}\normalfont\itshape {\color[rgb]{0,0,0}$yq^6$}%
}}}
%  METADATA <id>239</id> 
\put(15102,-10192){\makebox(0,0)[lb]{\smash{\fontsize{22}{26.4}\normalfont\itshape {\color[rgb]{0,0,0}$yq^7$}%
}}}
%  METADATA <id>240</id> 
\put(15942,-10192){\makebox(0,0)[lb]{\smash{\fontsize{22}{26.4}\normalfont\itshape {\color[rgb]{0,0,0}$yq^8$}%
}}}
%  METADATA <id>241</id> 
\put(16767,-10185){\makebox(0,0)[lb]{\smash{\fontsize{22}{26.4}\normalfont\itshape {\color[rgb]{0,0,0}$yq^9$}%
}}}
%  METADATA <id>242</id> 
\put(10617,-9466){\makebox(0,0)[lb]{\smash{\fontsize{22}{26.4}\normalfont\itshape {\color[rgb]{0,0,0}$yq$}%
}}}
%  METADATA <id>243</id> 
\put(11434,-9458){\makebox(0,0)[lb]{\smash{\fontsize{22}{26.4}\normalfont\itshape {\color[rgb]{0,0,0}$yq^2$}%
}}}
%  METADATA <id>244</id> 
\put(12259,-9463){\makebox(0,0)[lb]{\smash{\fontsize{22}{26.4}\normalfont\itshape {\color[rgb]{0,0,0}$yq^3$}%
}}}
%  METADATA <id>245</id> 
\put(13077,-9448){\makebox(0,0)[lb]{\smash{\fontsize{22}{26.4}\normalfont\itshape {\color[rgb]{0,0,0}$yq^4$}%
}}}
%  METADATA <id>246</id> 
\put(13872,-9433){\makebox(0,0)[lb]{\smash{\fontsize{22}{26.4}\normalfont\itshape {\color[rgb]{0,0,0}$yq^5$}%
}}}
%  METADATA <id>247</id> 
\put(14697,-9440){\makebox(0,0)[lb]{\smash{\fontsize{22}{26.4}\normalfont\itshape {\color[rgb]{0,0,0}$yq^6$}%
}}}
%  METADATA <id>248</id> 
\put(15507,-9455){\makebox(0,0)[lb]{\smash{\fontsize{22}{26.4}\normalfont\itshape {\color[rgb]{0,0,0}$yq^7$}%
}}}
%  METADATA <id>249</id> 
\put(16347,-9455){\makebox(0,0)[lb]{\smash{\fontsize{22}{26.4}\normalfont\itshape {\color[rgb]{0,0,0}$yq^8$}%
}}}
%  METADATA <id>250</id> 
\put(10992,-8761){\makebox(0,0)[lb]{\smash{\fontsize{22}{26.4}\normalfont\itshape {\color[rgb]{0,0,0}$yq$}%
}}}
%  METADATA <id>251</id> 
\put(11809,-8753){\makebox(0,0)[lb]{\smash{\fontsize{22}{26.4}\normalfont\itshape {\color[rgb]{0,0,0}$yq^2$}%
}}}
%  METADATA <id>252</id> 
\put(12634,-8758){\makebox(0,0)[lb]{\smash{\fontsize{22}{26.4}\normalfont\itshape {\color[rgb]{0,0,0}$yq^3$}%
}}}
%  METADATA <id>253</id> 
\put(13452,-8743){\makebox(0,0)[lb]{\smash{\fontsize{22}{26.4}\normalfont\itshape {\color[rgb]{0,0,0}$yq^4$}%
}}}
%  METADATA <id>254</id> 
\put(14247,-8728){\makebox(0,0)[lb]{\smash{\fontsize{22}{26.4}\normalfont\itshape {\color[rgb]{0,0,0}$yq^5$}%
}}}
%  METADATA <id>255</id> 
\put(15072,-8735){\makebox(0,0)[lb]{\smash{\fontsize{22}{26.4}\normalfont\itshape {\color[rgb]{0,0,0}$yq^6$}%
}}}
%  METADATA <id>256</id> 
\put(15882,-8750){\makebox(0,0)[lb]{\smash{\fontsize{22}{26.4}\normalfont\itshape {\color[rgb]{0,0,0}$yq^7$}%
}}}
%  METADATA <id>257</id> 
\put(11412,-8033){\makebox(0,0)[lb]{\smash{\fontsize{22}{26.4}\normalfont\itshape {\color[rgb]{0,0,0}$yq$}%
}}}
%  METADATA <id>258</id> 
\put(12229,-8025){\makebox(0,0)[lb]{\smash{\fontsize{22}{26.4}\normalfont\itshape {\color[rgb]{0,0,0}$yq^2$}%
}}}
%  METADATA <id>259</id> 
\put(13054,-8030){\makebox(0,0)[lb]{\smash{\fontsize{22}{26.4}\normalfont\itshape {\color[rgb]{0,0,0}$yq^3$}%
}}}
%  METADATA <id>260</id> 
\put(13872,-8015){\makebox(0,0)[lb]{\smash{\fontsize{22}{26.4}\normalfont\itshape {\color[rgb]{0,0,0}$yq^4$}%
}}}
%  METADATA <id>261</id> 
\put(14667,-8000){\makebox(0,0)[lb]{\smash{\fontsize{22}{26.4}\normalfont\itshape {\color[rgb]{0,0,0}$yq^5$}%
}}}
%  METADATA <id>262</id> 
\put(15492,-8007){\makebox(0,0)[lb]{\smash{\fontsize{22}{26.4}\normalfont\itshape {\color[rgb]{0,0,0}$yq^6$}%
}}}
%  METADATA <id>263</id> 
\put(11832,-7313){\makebox(0,0)[lb]{\smash{\fontsize{22}{26.4}\normalfont\itshape {\color[rgb]{0,0,0}$yq$}%
}}}
%  METADATA <id>264</id> 
\put(12649,-7305){\makebox(0,0)[lb]{\smash{\fontsize{22}{26.4}\normalfont\itshape {\color[rgb]{0,0,0}$yq^2$}%
}}}
%  METADATA <id>265</id> 
\put(13474,-7310){\makebox(0,0)[lb]{\smash{\fontsize{22}{26.4}\normalfont\itshape {\color[rgb]{0,0,0}$yq^3$}%
}}}
%  METADATA <id>266</id> 
\put(14292,-7295){\makebox(0,0)[lb]{\smash{\fontsize{22}{26.4}\normalfont\itshape {\color[rgb]{0,0,0}$yq^4$}%
}}}
%  METADATA <id>267</id> 
\put(15087,-7280){\makebox(0,0)[lb]{\smash{\fontsize{22}{26.4}\normalfont\itshape {\color[rgb]{0,0,0}$yq^5$}%
}}}
%  METADATA <id>268</id> 
\put(12252,-6638){\makebox(0,0)[lb]{\smash{\fontsize{22}{26.4}\normalfont\itshape {\color[rgb]{0,0,0}$yq$}%
}}}
%  METADATA <id>269</id> 
\put(13069,-6630){\makebox(0,0)[lb]{\smash{\fontsize{22}{26.4}\normalfont\itshape {\color[rgb]{0,0,0}$yq^2$}%
}}}
%  METADATA <id>270</id> 
\put(13894,-6635){\makebox(0,0)[lb]{\smash{\fontsize{22}{26.4}\normalfont\itshape {\color[rgb]{0,0,0}$yq^3$}%
}}}
%  METADATA <id>35</id> 
\put(802,-4059){\makebox(0,0)[lb]{\smash{\fontsize{22}{26.4}\normalfont\itshape {\color[rgb]{0,0,0}$xq$}%
}}}
%  METADATA <id>36</id> 
\put(1702,-4059){\makebox(0,0)[lb]{\smash{\fontsize{22}{26.4}\normalfont\itshape {\color[rgb]{0,0,0}$xq$}%
}}}
%  METADATA <id>37</id> 
\put(2493,-4102){\makebox(0,0)[lb]{\smash{\fontsize{22}{26.4}\normalfont\itshape {\color[rgb]{0,0,0}$xq$}%
}}}
%  METADATA <id>38</id> 
\put(3312,-4102){\makebox(0,0)[lb]{\smash{\fontsize{22}{26.4}\normalfont\itshape {\color[rgb]{0,0,0}$xq$}%
}}}
%  METADATA <id>39</id> 
\put(4130,-4102){\makebox(0,0)[lb]{\smash{\fontsize{22}{26.4}\normalfont\itshape {\color[rgb]{0,0,0}$xq$}%
}}}
%  METADATA <id>40</id> 
\put(4948,-4102){\makebox(0,0)[lb]{\smash{\fontsize{22}{26.4}\normalfont\itshape {\color[rgb]{0,0,0}$xq$}%
}}}
%  METADATA <id>41</id> 
\put(5766,-4102){\makebox(0,0)[lb]{\smash{\fontsize{22}{26.4}\normalfont\itshape {\color[rgb]{0,0,0}$xq$}%
}}}
%  METADATA <id>42</id> 
\put(6585,-4102){\makebox(0,0)[lb]{\smash{\fontsize{22}{26.4}\normalfont\itshape {\color[rgb]{0,0,0}$xq$}%
}}}
%  METADATA <id>43</id> 
\put(7403,-4102){\makebox(0,0)[lb]{\smash{\fontsize{22}{26.4}\normalfont\itshape {\color[rgb]{0,0,0}$xq$}%
}}}
%  METADATA <id>44</id> 
\put(1211,-3309){\makebox(0,0)[lb]{\smash{\fontsize{22}{26.4}\normalfont\itshape {\color[rgb]{0,0,0}$xq^2$}%
}}}
%  METADATA <id>45</id> 
\put(2050,-3326){\makebox(0,0)[lb]{\smash{\fontsize{22}{26.4}\normalfont\itshape {\color[rgb]{0,0,0}$xq^2$}%
}}}
%  METADATA <id>46</id> 
\put(2868,-3326){\makebox(0,0)[lb]{\smash{\fontsize{22}{26.4}\normalfont\itshape {\color[rgb]{0,0,0}$xq^2$}%
}}}
%  METADATA <id>47</id> 
\put(3687,-3326){\makebox(0,0)[lb]{\smash{\fontsize{22}{26.4}\normalfont\itshape {\color[rgb]{0,0,0}$xq^2$}%
}}}
%  METADATA <id>48</id> 
\put(4505,-3326){\makebox(0,0)[lb]{\smash{\fontsize{22}{26.4}\normalfont\itshape {\color[rgb]{0,0,0}$xq^2$}%
}}}
%  METADATA <id>49</id> 
\put(5323,-3326){\makebox(0,0)[lb]{\smash{\fontsize{22}{26.4}\normalfont\itshape {\color[rgb]{0,0,0}$xq^2$}%
}}}
%  METADATA <id>50</id> 
\put(6142,-3373){\makebox(0,0)[lb]{\smash{\fontsize{22}{26.4}\normalfont\itshape {\color[rgb]{0,0,0}$xq^2$}%
}}}
%  METADATA <id>51</id> 
\put(6960,-3326){\makebox(0,0)[lb]{\smash{\fontsize{22}{26.4}\normalfont\itshape {\color[rgb]{0,0,0}$xq^2$}%
}}}
%  METADATA <id>52</id> 
\put(1641,-2617){\makebox(0,0)[lb]{\smash{\fontsize{22}{26.4}\normalfont\itshape {\color[rgb]{0,0,0}$xq^3$}%
}}}
%  METADATA <id>53</id> 
\put(2459,-2617){\makebox(0,0)[lb]{\smash{\fontsize{22}{26.4}\normalfont\itshape {\color[rgb]{0,0,0}$xq^3$}%
}}}
%  METADATA <id>54</id> 
\put(3278,-2617){\makebox(0,0)[lb]{\smash{\fontsize{22}{26.4}\normalfont\itshape {\color[rgb]{0,0,0}$xq^3$}%
}}}
%  METADATA <id>55</id> 
\put(4096,-2617){\makebox(0,0)[lb]{\smash{\fontsize{22}{26.4}\normalfont\itshape {\color[rgb]{0,0,0}$xq^3$}%
}}}
%  METADATA <id>56</id> 
\put(4914,-2617){\makebox(0,0)[lb]{\smash{\fontsize{22}{26.4}\normalfont\itshape {\color[rgb]{0,0,0}$xq^3$}%
}}}
%  METADATA <id>57</id> 
\put(5732,-2617){\makebox(0,0)[lb]{\smash{\fontsize{22}{26.4}\normalfont\itshape {\color[rgb]{0,0,0}$xq^3$}%
}}}
%  METADATA <id>58</id> 
\put(6551,-2617){\makebox(0,0)[lb]{\smash{\fontsize{22}{26.4}\normalfont\itshape {\color[rgb]{0,0,0}$xq^3$}%
}}}
%  METADATA <id>59</id> 
\put(2050,-1908){\makebox(0,0)[lb]{\smash{\fontsize{22}{26.4}\normalfont\itshape {\color[rgb]{0,0,0}$xq^4$}%
}}}
%  METADATA <id>60</id> 
\put(2868,-1908){\makebox(0,0)[lb]{\smash{\fontsize{22}{26.4}\normalfont\itshape {\color[rgb]{0,0,0}$xq^4$}%
}}}
%  METADATA <id>61</id> 
\put(3687,-1908){\makebox(0,0)[lb]{\smash{\fontsize{22}{26.4}\normalfont\itshape {\color[rgb]{0,0,0}$xq^4$}%
}}}
%  METADATA <id>62</id> 
\put(4505,-1908){\makebox(0,0)[lb]{\smash{\fontsize{22}{26.4}\normalfont\itshape {\color[rgb]{0,0,0}$xq^4$}%
}}}
%  METADATA <id>63</id> 
\put(5323,-1908){\makebox(0,0)[lb]{\smash{\fontsize{22}{26.4}\normalfont\itshape {\color[rgb]{0,0,0}$xq^4$}%
}}}
%  METADATA <id>64</id> 
\put(6142,-1908){\makebox(0,0)[lb]{\smash{\fontsize{22}{26.4}\normalfont\itshape {\color[rgb]{0,0,0}$xq^4$}%
}}}
%  METADATA <id>65</id> 
\put(2459,-1200){\makebox(0,0)[lb]{\smash{\fontsize{22}{26.4}\normalfont\itshape {\color[rgb]{0,0,0}$xq^5$}%
}}}
%  METADATA <id>66</id> 
\put(3278,-1200){\makebox(0,0)[lb]{\smash{\fontsize{22}{26.4}\normalfont\itshape {\color[rgb]{0,0,0}$xq^5$}%
}}}
%  METADATA <id>67</id> 
\put(4096,-1200){\makebox(0,0)[lb]{\smash{\fontsize{22}{26.4}\normalfont\itshape {\color[rgb]{0,0,0}$xq^5$}%
}}}
%  METADATA <id>68</id> 
\put(4914,-1200){\makebox(0,0)[lb]{\smash{\fontsize{22}{26.4}\normalfont\itshape {\color[rgb]{0,0,0}$xq^5$}%
}}}
%  METADATA <id>69</id> 
\put(5732,-1200){\makebox(0,0)[lb]{\smash{\fontsize{22}{26.4}\normalfont\itshape {\color[rgb]{0,0,0}$xq^5$}%
}}}
%  METADATA <id>70</id> 
\put(2868,-491){\makebox(0,0)[lb]{\smash{\fontsize{22}{26.4}\normalfont\itshape {\color[rgb]{0,0,0}$xq^6$}%
}}}
%  METADATA <id>71</id> 
\put(3687,-491){\makebox(0,0)[lb]{\smash{\fontsize{22}{26.4}\normalfont\itshape {\color[rgb]{0,0,0}$xq^6$}%
}}}
%  METADATA <id>72</id> 
\put(4505,-491){\makebox(0,0)[lb]{\smash{\fontsize{22}{26.4}\normalfont\itshape {\color[rgb]{0,0,0}$xq^6$}%
}}}
%  METADATA <id>73</id> 
\put(5323,-491){\makebox(0,0)[lb]{\smash{\fontsize{22}{26.4}\normalfont\itshape {\color[rgb]{0,0,0}$xq^6$}%
}}}
%  METADATA <id>97</id> 
\put(10226,-4031){\makebox(0,0)[lb]{\smash{\fontsize{22}{26.4}\normalfont\itshape {\color[rgb]{0,0,0}$y$}%
}}}
%  METADATA <id>98</id> 
\put(11111,-4031){\makebox(0,0)[lb]{\smash{\fontsize{22}{26.4}\normalfont\itshape {\color[rgb]{0,0,0}$y$}%
}}}
%  METADATA <id>99</id> 
\put(11930,-4031){\makebox(0,0)[lb]{\smash{\fontsize{22}{26.4}\normalfont\itshape {\color[rgb]{0,0,0}$y$}%
}}}
%  METADATA <id>100</id> 
\put(12748,-4031){\makebox(0,0)[lb]{\smash{\fontsize{22}{26.4}\normalfont\itshape {\color[rgb]{0,0,0}$y$}%
}}}
%  METADATA <id>101</id> 
\put(13566,-4031){\makebox(0,0)[lb]{\smash{\fontsize{22}{26.4}\normalfont\itshape {\color[rgb]{0,0,0}$y$}%
}}}
%  METADATA <id>102</id> 
\put(14385,-4031){\makebox(0,0)[lb]{\smash{\fontsize{22}{26.4}\normalfont\itshape {\color[rgb]{0,0,0}$y$}%
}}}
%  METADATA <id>103</id> 
\put(15203,-4031){\makebox(0,0)[lb]{\smash{\fontsize{22}{26.4}\normalfont\itshape {\color[rgb]{0,0,0}$y$}%
}}}
%  METADATA <id>104</id> 
\put(16021,-4031){\makebox(0,0)[lb]{\smash{\fontsize{22}{26.4}\normalfont\itshape {\color[rgb]{0,0,0}$y$}%
}}}
%  METADATA <id>105</id> 
\put(16839,-4031){\makebox(0,0)[lb]{\smash{\fontsize{22}{26.4}\normalfont\itshape {\color[rgb]{0,0,0}$y$}%
}}}
%  METADATA <id>106</id> 
\put(10669,-3322){\makebox(0,0)[lb]{\smash{\fontsize{22}{26.4}\normalfont\itshape {\color[rgb]{0,0,0}$y$}%
}}}
%  METADATA <id>107</id> 
\put(11554,-3322){\makebox(0,0)[lb]{\smash{\fontsize{22}{26.4}\normalfont\itshape {\color[rgb]{0,0,0}$y$}%
}}}
%  METADATA <id>108</id> 
\put(12373,-3322){\makebox(0,0)[lb]{\smash{\fontsize{22}{26.4}\normalfont\itshape {\color[rgb]{0,0,0}$y$}%
}}}
%  METADATA <id>109</id> 
\put(13191,-3322){\makebox(0,0)[lb]{\smash{\fontsize{22}{26.4}\normalfont\itshape {\color[rgb]{0,0,0}$y$}%
}}}
%  METADATA <id>110</id> 
\put(14009,-3322){\makebox(0,0)[lb]{\smash{\fontsize{22}{26.4}\normalfont\itshape {\color[rgb]{0,0,0}$y$}%
}}}
%  METADATA <id>111</id> 
\put(14828,-3322){\makebox(0,0)[lb]{\smash{\fontsize{22}{26.4}\normalfont\itshape {\color[rgb]{0,0,0}$y$}%
}}}
%  METADATA <id>112</id> 
\put(15646,-3322){\makebox(0,0)[lb]{\smash{\fontsize{22}{26.4}\normalfont\itshape {\color[rgb]{0,0,0}$y$}%
}}}
%  METADATA <id>113</id> 
\put(16464,-3322){\makebox(0,0)[lb]{\smash{\fontsize{22}{26.4}\normalfont\itshape {\color[rgb]{0,0,0}$y$}%
}}}
%  METADATA <id>114</id> 
\put(11079,-2614){\makebox(0,0)[lb]{\smash{\fontsize{22}{26.4}\normalfont\itshape {\color[rgb]{0,0,0}$y$}%
}}}
%  METADATA <id>115</id> 
\put(11964,-2614){\makebox(0,0)[lb]{\smash{\fontsize{22}{26.4}\normalfont\itshape {\color[rgb]{0,0,0}$y$}%
}}}
%  METADATA <id>116</id> 
\put(12783,-2614){\makebox(0,0)[lb]{\smash{\fontsize{22}{26.4}\normalfont\itshape {\color[rgb]{0,0,0}$y$}%
}}}
%  METADATA <id>117</id> 
\put(13601,-2614){\makebox(0,0)[lb]{\smash{\fontsize{22}{26.4}\normalfont\itshape {\color[rgb]{0,0,0}$y$}%
}}}
%  METADATA <id>118</id> 
\put(14419,-2614){\makebox(0,0)[lb]{\smash{\fontsize{22}{26.4}\normalfont\itshape {\color[rgb]{0,0,0}$y$}%
}}}
%  METADATA <id>119</id> 
\put(15238,-2614){\makebox(0,0)[lb]{\smash{\fontsize{22}{26.4}\normalfont\itshape {\color[rgb]{0,0,0}$y$}%
}}}
%  METADATA <id>120</id> 
\put(16056,-2614){\makebox(0,0)[lb]{\smash{\fontsize{22}{26.4}\normalfont\itshape {\color[rgb]{0,0,0}$y$}%
}}}
%  METADATA <id>121</id> 
\put(11488,-1905){\makebox(0,0)[lb]{\smash{\fontsize{22}{26.4}\normalfont\itshape {\color[rgb]{0,0,0}$y$}%
}}}
%  METADATA <id>122</id> 
\put(12373,-1905){\makebox(0,0)[lb]{\smash{\fontsize{22}{26.4}\normalfont\itshape {\color[rgb]{0,0,0}$y$}%
}}}
%  METADATA <id>123</id> 
\put(13192,-1905){\makebox(0,0)[lb]{\smash{\fontsize{22}{26.4}\normalfont\itshape {\color[rgb]{0,0,0}$y$}%
}}}
%  METADATA <id>124</id> 
\put(14010,-1905){\makebox(0,0)[lb]{\smash{\fontsize{22}{26.4}\normalfont\itshape {\color[rgb]{0,0,0}$y$}%
}}}
%  METADATA <id>125</id> 
\put(14828,-1905){\makebox(0,0)[lb]{\smash{\fontsize{22}{26.4}\normalfont\itshape {\color[rgb]{0,0,0}$y$}%
}}}
%  METADATA <id>126</id> 
\put(15647,-1905){\makebox(0,0)[lb]{\smash{\fontsize{22}{26.4}\normalfont\itshape {\color[rgb]{0,0,0}$y$}%
}}}
%  METADATA <id>127</id> 
\put(11897,-1196){\makebox(0,0)[lb]{\smash{\fontsize{22}{26.4}\normalfont\itshape {\color[rgb]{0,0,0}$y$}%
}}}
%  METADATA <id>128</id> 
\put(12782,-1196){\makebox(0,0)[lb]{\smash{\fontsize{22}{26.4}\normalfont\itshape {\color[rgb]{0,0,0}$y$}%
}}}
%  METADATA <id>129</id> 
\put(13601,-1196){\makebox(0,0)[lb]{\smash{\fontsize{22}{26.4}\normalfont\itshape {\color[rgb]{0,0,0}$y$}%
}}}
%  METADATA <id>130</id> 
\put(14419,-1196){\makebox(0,0)[lb]{\smash{\fontsize{22}{26.4}\normalfont\itshape {\color[rgb]{0,0,0}$y$}%
}}}
%  METADATA <id>131</id> 
\put(15237,-1196){\makebox(0,0)[lb]{\smash{\fontsize{22}{26.4}\normalfont\itshape {\color[rgb]{0,0,0}$y$}%
}}}
%  METADATA <id>132</id> 
\put(12306,-488){\makebox(0,0)[lb]{\smash{\fontsize{22}{26.4}\normalfont\itshape {\color[rgb]{0,0,0}$y$}%
}}}
%  METADATA <id>133</id> 
\put(13191,-488){\makebox(0,0)[lb]{\smash{\fontsize{22}{26.4}\normalfont\itshape {\color[rgb]{0,0,0}$y$}%
}}}
%  METADATA <id>134</id> 
\put(14010,-488){\makebox(0,0)[lb]{\smash{\fontsize{22}{26.4}\normalfont\itshape {\color[rgb]{0,0,0}$y$}%
}}}
%  METADATA <id>135</id> 
\put(14828,-488){\makebox(0,0)[lb]{\smash{\fontsize{22}{26.4}\normalfont\itshape {\color[rgb]{0,0,0}$y$}%
}}}
%  METADATA <id>170</id> 
\put(758,-10215){\makebox(0,0)[lb]{\smash{\fontsize{22}{26.4}\normalfont\itshape {\color[rgb]{0,0,0}$xq$}%
}}}
%  METADATA <id>171</id> 
\put(5719,-10218){\makebox(0,0)[lb]{\smash{\fontsize{22}{26.4}\normalfont\itshape {\color[rgb]{0,0,0}$xq^7$}%
}}}
%  METADATA <id>172</id> 
\put(6537,-10218){\makebox(0,0)[lb]{\smash{\fontsize{22}{26.4}\normalfont\itshape {\color[rgb]{0,0,0}$xq^8$}%
}}}
%  METADATA <id>173</id> 
\put(7359,-10258){\makebox(0,0)[lb]{\smash{\fontsize{22}{26.4}\normalfont\itshape {\color[rgb]{0,0,0}$xq^9$}%
}}}
%  METADATA <id>174</id> 
\put(1594,-10180){\makebox(0,0)[lb]{\smash{\fontsize{22}{26.4}\normalfont\itshape {\color[rgb]{0,0,0}$xq^2$}%
}}}
%  METADATA <id>175</id> 
\put(2457,-10195){\makebox(0,0)[lb]{\smash{\fontsize{22}{26.4}\normalfont\itshape {\color[rgb]{0,0,0}$xq^3$}%
}}}
%  METADATA <id>176</id> 
\put(3259,-10210){\makebox(0,0)[lb]{\smash{\fontsize{22}{26.4}\normalfont\itshape {\color[rgb]{0,0,0}$xq^4$}%
}}}
%  METADATA <id>177</id> 
\put(4092,-10195){\makebox(0,0)[lb]{\smash{\fontsize{22}{26.4}\normalfont\itshape {\color[rgb]{0,0,0}$xq^5$}%
}}}
%  METADATA <id>178</id> 
\put(4894,-10195){\makebox(0,0)[lb]{\smash{\fontsize{22}{26.4}\normalfont\itshape {\color[rgb]{0,0,0}$xq^6$}%
}}}
%  METADATA <id>202</id> 
\put(1234,-9491){\makebox(0,0)[lb]{\smash{\fontsize{22}{26.4}\normalfont\itshape {\color[rgb]{0,0,0}$xq$}%
}}}
%  METADATA <id>203</id> 
\put(6195,-9494){\makebox(0,0)[lb]{\smash{\fontsize{22}{26.4}\normalfont\itshape {\color[rgb]{0,0,0}$xq^7$}%
}}}
%  METADATA <id>204</id> 
\put(7013,-9494){\makebox(0,0)[lb]{\smash{\fontsize{22}{26.4}\normalfont\itshape {\color[rgb]{0,0,0}$xq^8$}%
}}}
%  METADATA <id>205</id> 
\put(2070,-9456){\makebox(0,0)[lb]{\smash{\fontsize{22}{26.4}\normalfont\itshape {\color[rgb]{0,0,0}$xq^2$}%
}}}
%  METADATA <id>206</id> 
\put(2933,-9471){\makebox(0,0)[lb]{\smash{\fontsize{22}{26.4}\normalfont\itshape {\color[rgb]{0,0,0}$xq^3$}%
}}}
%  METADATA <id>207</id> 
\put(3735,-9486){\makebox(0,0)[lb]{\smash{\fontsize{22}{26.4}\normalfont\itshape {\color[rgb]{0,0,0}$xq^4$}%
}}}
%  METADATA <id>208</id> 
\put(4568,-9471){\makebox(0,0)[lb]{\smash{\fontsize{22}{26.4}\normalfont\itshape {\color[rgb]{0,0,0}$xq^5$}%
}}}
%  METADATA <id>209</id> 
\put(5370,-9471){\makebox(0,0)[lb]{\smash{\fontsize{22}{26.4}\normalfont\itshape {\color[rgb]{0,0,0}$xq^6$}%
}}}
%  METADATA <id>210</id> 
\put(1638,-8798){\makebox(0,0)[lb]{\smash{\fontsize{22}{26.4}\normalfont\itshape {\color[rgb]{0,0,0}$xq$}%
}}}
%  METADATA <id>211</id> 
\put(6599,-8801){\makebox(0,0)[lb]{\smash{\fontsize{22}{26.4}\normalfont\itshape {\color[rgb]{0,0,0}$xq^7$}%
}}}
%  METADATA <id>213</id> 
\put(2474,-8763){\makebox(0,0)[lb]{\smash{\fontsize{22}{26.4}\normalfont\itshape {\color[rgb]{0,0,0}$xq^2$}%
}}}
%  METADATA <id>214</id> 
\put(3337,-8778){\makebox(0,0)[lb]{\smash{\fontsize{22}{26.4}\normalfont\itshape {\color[rgb]{0,0,0}$xq^3$}%
}}}
%  METADATA <id>215</id> 
\put(4139,-8793){\makebox(0,0)[lb]{\smash{\fontsize{22}{26.4}\normalfont\itshape {\color[rgb]{0,0,0}$xq^4$}%
}}}
%  METADATA <id>216</id> 
\put(4972,-8778){\makebox(0,0)[lb]{\smash{\fontsize{22}{26.4}\normalfont\itshape {\color[rgb]{0,0,0}$xq^5$}%
}}}
%  METADATA <id>217</id> 
\put(5774,-8778){\makebox(0,0)[lb]{\smash{\fontsize{22}{26.4}\normalfont\itshape {\color[rgb]{0,0,0}$xq^6$}%
}}}
%  METADATA <id>218</id> 
\put(2028,-8086){\makebox(0,0)[lb]{\smash{\fontsize{22}{26.4}\normalfont\itshape {\color[rgb]{0,0,0}$xq$}%
}}}
%  METADATA <id>219</id> 
\put(2864,-8051){\makebox(0,0)[lb]{\smash{\fontsize{22}{26.4}\normalfont\itshape {\color[rgb]{0,0,0}$xq^2$}%
}}}
%  METADATA <id>220</id> 
\put(3727,-8066){\makebox(0,0)[lb]{\smash{\fontsize{22}{26.4}\normalfont\itshape {\color[rgb]{0,0,0}$xq^3$}%
}}}
%  METADATA <id>221</id> 
\put(4529,-8081){\makebox(0,0)[lb]{\smash{\fontsize{22}{26.4}\normalfont\itshape {\color[rgb]{0,0,0}$xq^4$}%
}}}
%  METADATA <id>222</id> 
\put(5362,-8066){\makebox(0,0)[lb]{\smash{\fontsize{22}{26.4}\normalfont\itshape {\color[rgb]{0,0,0}$xq^5$}%
}}}
%  METADATA <id>223</id> 
\put(6164,-8066){\makebox(0,0)[lb]{\smash{\fontsize{22}{26.4}\normalfont\itshape {\color[rgb]{0,0,0}$xq^6$}%
}}}
%  METADATA <id>224</id> 
\put(2448,-7366){\makebox(0,0)[lb]{\smash{\fontsize{22}{26.4}\normalfont\itshape {\color[rgb]{0,0,0}$xq$}%
}}}
%  METADATA <id>225</id> 
\put(3284,-7331){\makebox(0,0)[lb]{\smash{\fontsize{22}{26.4}\normalfont\itshape {\color[rgb]{0,0,0}$xq^2$}%
}}}
%  METADATA <id>226</id> 
\put(4147,-7346){\makebox(0,0)[lb]{\smash{\fontsize{22}{26.4}\normalfont\itshape {\color[rgb]{0,0,0}$xq^3$}%
}}}
%  METADATA <id>227</id> 
\put(4949,-7361){\makebox(0,0)[lb]{\smash{\fontsize{22}{26.4}\normalfont\itshape {\color[rgb]{0,0,0}$xq^4$}%
}}}
%  METADATA <id>228</id> 
\put(5782,-7346){\makebox(0,0)[lb]{\smash{\fontsize{22}{26.4}\normalfont\itshape {\color[rgb]{0,0,0}$xq^5$}%
}}}
%  METADATA <id>229</id> 
\put(2860,-6668){\makebox(0,0)[lb]{\smash{\fontsize{22}{26.4}\normalfont\itshape {\color[rgb]{0,0,0}$xq$}%
}}}
%  METADATA <id>230</id> 
\put(3696,-6633){\makebox(0,0)[lb]{\smash{\fontsize{22}{26.4}\normalfont\itshape {\color[rgb]{0,0,0}$xq^2$}%
}}}
%  METADATA <id>231</id> 
\put(4559,-6648){\makebox(0,0)[lb]{\smash{\fontsize{22}{26.4}\normalfont\itshape {\color[rgb]{0,0,0}$xq^3$}%
}}}
%  METADATA <id>232</id> 
\put(5361,-6663){\makebox(0,0)[lb]{\smash{\fontsize{22}{26.4}\normalfont\itshape {\color[rgb]{0,0,0}$xq^4$}%
}}}
%  METADATA <id>233</id> 
\put(10212,-10203){\makebox(0,0)[lb]{\smash{\fontsize{22}{26.4}\normalfont\itshape {\color[rgb]{0,0,0}$yq$}%
}}}
%  METADATA <id>234</id> 
\put(11029,-10195){\makebox(0,0)[lb]{\smash{\fontsize{22}{26.4}\normalfont\itshape {\color[rgb]{0,0,0}$yq^2$}%
}}}
\end{picture}}
		\caption{Two weight assignments on a semihexagon.}
		\label{semihex2}
\end{figure}

\begin{lem}\label{semilem1} We have
\[\T(\mathcal{SH}_{a,b}(\wt_{x,y}(q)))=x^{ab} \cdot \left(\frac{y}{x}\right)^{\sum_{i=1}^{b}(r_i-i)} \cdot s_{\lambda(s_1,s_2,\dots,s_a)}(q,q^2,\dots,q^{a}),\]
where $\{r_1,r_2,\dots,r_b\}=[a+b]\setminus\{s_1,s_2,\dots,s_a\}$.
\end{lem}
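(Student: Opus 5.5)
The plan is to compare the weighted tiling sum with the unweighted-in-$x,y$ one, i.e.\ the case $x=y=1$ given by \eqref{Gp}. The key point is that, in every tiling of $\mathcal{SH}_{a,b}(s_1,\dots,s_a)$, the number of right-tilting lozenges and the number of left-tilting lozenges are fixed by the shape. Once this is shown, $\T(\mathcal{SH}_{a,b}(\wt_{x,y}(q)))$ equals $x^{R}y^{L}$ times the $q$-weighted count, where $R$ and $L$ are these two counts. The $q$-weighted count is $s_{\lambda(s_1,\dots,s_a)}(q,\dots,q^a)$ by \eqref{Gp}, since the $q^i$ attached to right-tilting lozenges in row $i$ is exactly the weight used there.

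To compute $R$, use the path encoding $(Q_1,\dots,Q_a)$. The path $Q_i$ consists of vertical and right-tilting lozenges. It starts at the $i$-th node on the northwest side and ends at the $i$-th node from the right on the base, which is position $s_{a-i+1}$. Its number of right-tilting lozenges is the length of the $i$-th row of $\pi_L$, namely $s_{a-i+1}-(a-i+1)$. Summing over $i$ gives
\[
R=\sum_{i=1}^a (s_i-i)=|\lambda(s_1,\dots,s_a)|.
\]

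To compute $L$, use the dual encoding by the $b$ paths $P_1,\dots,P_b$ joining top and bottom. These paths run through the lozenges not of the "$Q$-type" and end at the non-dented base positions $r_1<\dots<r_b$. Alternatively, count directly. The total number of lozenges is fixed, equal to half the number of unit triangles. Counting the up-triangles on each horizontal line, together with the conservation of vertical-lozenge counts, fixes all three orientation counts. Either way one should obtain
\[
L=ab-\sum_{j=1}^{b}(r_j-j).
\]
As a check, the no-dent case $s_i=i$ has $r_j=a+j$; then $L=0$ and $R=0$, so the tiling is forced. I would also verify the identity
\[
\sum_i(s_i-i)+\sum_j(r_j-j)=ab,
\]
which holds because $\{s_i\}\cup\{r_j\}=[a+b]$. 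Indeed, $\sum_i s_i+\sum_j r_j=\binom{a+b+1}{2}$, so the left side equals $\binom{a+b+1}{2}-\binom{a+1}{2}-\binom{b+1}{2}=ab$.

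With these, $R=ab-\sum_j(r_j-j)$, and therefore
\[
x^R y^L=x^{ab}x^{-\sum_j(r_j-j)}\,y^{\sum_j(r_j-j)}\cdot(\text{correction}).
\]
This matches the claimed factor $x^{ab}(y/x)^{\sum(r_j-j)}$ exactly when $L=\sum_j(r_j-j)$ and $R=ab-\sum_j(r_j-j)$. So the main obstacle is pinning down $L$ precisely from the geometry and the conventions of Figure~\ref{semihex2}, possibly with the roles of $R$ and $L$ swapped. I would settle it by a direct count. Each path $P_j$ ends at base position $r_j$ and starts at top position $j$, and its horizontal displacement $r_j-j$ is realized by exactly $r_j-j$ left-tilting lozenges. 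Hence $L=\sum_j(r_j-j)$. Then $R$ follows from the identity above, which agrees with the $Q$-path count, and the lemma follows.
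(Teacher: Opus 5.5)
Your final argument is correct and is essentially the paper's proof: both show that the numbers of left- and right-tilting lozenges are the same for every tiling, so each tiling's weight is $x^{R}y^{L}$ times its $\wt_{1,1}(q)$-weight, and both then invoke \eqref{Gp}. The only difference is that the paper reads both counts off the $P$-paths ($P_i$ has $r_i-i$ left-tilting and $a-r_i+i$ right-tilting lozenges, all other lozenges being vertical), whereas you get $R=\sum_i(s_i-i)$ from the $Q$-paths and convert it using $\sum_i(s_i-i)+\sum_j(r_j-j)=ab$.

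Do discard your exploratory formula $L=ab-\sum_j(r_j-j)$ and the check ``$L=0$, $R=0$''. Each $P_j$ uses exactly one tilting lozenge per row, so $L+R=ab$ always, and for $s_i=i$ one actually has $L=ab$ and $R=0$.
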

\begin{proof}
 Encode each lozenge tiling $L$ of the region as a family of $b$ disjoint lozenge paths $P_1,P_2,\dots P_b$ connecting the top and bottom of the region as in Figure \ref{semihex}(b). Each path $P_i$ has exactly $r_i-i$ left-tilting lozenges and $a-r_i+i$ right-tilting lozenges. The lozenges outside the $b$ paths $P_i$ are all vertical. We now change the weights of the left- and right-tilting lozenges as follows.  Divide the weight of each left-tilting lozenge by $y$ and the weight of each right-tilting lozenge by $x$.  It means, we divided the weight of each tiling $L$ by a constant $x^{\sum_{i=1}^{b}(a-r_i+i)}y^{\sum_{i}^b(r_i-i)}$. This modification reduces our weight assignment to the weight assignment $\wt_{1,1}(q)$ considered in Figure \ref{semihex}.    Thus \[\T(\mathcal{SH}_{a,b}(\wt_{x,y}(q)))=x^{ab} \cdot (y/x)^{\sum_{i=1}^{b}(r_i-i)} \cdot \T(\mathcal{SH}_{a,b}(\wt_{1,1}(q))).\] Then the lemma now follows from identity (\ref{Gp}).
\end{proof}

The second weight assignment on $\mathcal{SH}_{a,b}(s_1,\dotsc,s_a)$ is shown in Figure \ref{semihex2}(b). In particular,  in each row of the region, the $j$-th right-tilting lozenge counted from the left is weighted by $xq^{j}$, and the $j$-th  left-tilting lozenge is weighted by $yq^{j}$. Denote the resulting weighted region by $\mathcal{SH}_{a,b}(\overline{\wt}_{x,y}(q))$.

\begin{lem}\label{semilem2} We have
\begin{align}
\T(\mathcal{SH}_{a,b}(\overline{\wt}_{x,y}(q)))%%&=x^{ab} \cdot \left(\frac{y}{x}\right)^{\sum_{i=1}^{b}(r_i-i)}\cdot q^{ab(a+b+1)-\sum_{i=1}^{b}(r_i-i)}\notag\\
%%&\quad \quad \quad \quad\times s_{\lambda(s_1,s_2,\dots,s_a)}(q^{-1},q^{-2},\dots,q^{-a})\\
&=x^{ab} \cdot \left(\frac{y}{x}\right)^{\sum_{i=1}^{b}(r_i-i)}\cdot q^{ab(b-a)/2+a\sum_{i=1}^{b}(r_i-i)}\notag \\
&\quad \quad  \quad\quad \times s_{\lambda(s_1,s_2,\dots,s_a)}(q,q^{2},\dots,q^{a}).
\end{align}

\end{lem}
The proof of Lemma \ref{semilem2} is deferred to Section \ref{Sec:Complete}.

\medskip
In \cite{Ful2}, Fulmek provides an elegant combinatorial proof for Ciucu’s conjecture on a Schur function identity. Krattenthaler then gives several extensions of the identity and uses them to enumerate perfect matchings of a ``holey Aztec rectangle" graph \cite{Krat1}.

 \begin{thm}[Theorem 5 in \cite{Krat1}]\label{thr5}Let $T=\{t_1,t_2,\dots,t_{2m+d}\}$ be a set of positive integers with $t_1<t_2<\cdots<t_{2m+d}.$ Let $\mathcal{E}=\{t_2,t_4,t_6,\dots\}$ and $\mathcal{O}=\{t_1,t_3,t_5,\dots\}$ denote the sets of even-indexed and odd-indexed elements of $T$, respectively. Then \begin{align}&\sum_{(A,B)}s_{\lambda(A)}(X_{m})s_{\lambda(B)}(X_{m+d})=2^{m}\notag\\
&\times\displaystyle\sum _{1\leq k_1<...<k_{\lfloor d/2\rfloor}\leq m+\lfloor d/2\rfloor }s_{\lambda\left(\mathcal{E}\setminus \left\{t_{2k_1},...,t_{2k_{\lfloor d/2\rfloor}}\right\}\right)}(X_{n}).s_{\lambda\left(\mathcal{O}\cup \left\{t_{2k_1},...,t_{2k_{\lfloor d/2\rfloor}}\right\}\right)}(X_{n+d}),\end{align}
where the sum on the left-hand side is taken over all pairs of disjoint sets $(A,B)$ whose union is $T$ and whose cardinalities are given by $|A|=m$ and $|B|=m+d.$	
\end{thm}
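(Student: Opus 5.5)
Since this is Theorem 5 of \cite{Krat1}, the paper can quote it; here is how I would prove it. I read $X_n, X_{n+d}$ on the right-hand side as $X_m, X_{m+d}$, where $X_k=(x_1,\dots,x_k)$ and $X_m\subseteq X_{m+d}$. The approach is Fulmek's combinatorial superposition argument from \cite{Ful2}, in the form Krattenthaler extended.

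First, I encode $s_{\lambda(A)}(X_m)$ by Lindstr\"om--Gessel--Viennot. A column-strict plane partition of shape $\lambda(A)$ with entries at most $m$ corresponds to a family of $m$ non-intersecting lattice paths with unit east/north steps. The starting points $(-i,1)$, $i=1,\dots,m$, are fixed, and the end points sit at heights $m$ with abscissae read off from $A$. Equivalently, in the dented-semihexagon language of (\ref{Gp}), they are lozenge paths ending at the dents $A$. An east step at height $j$ gets weight $x_j$. Similarly, $s_{\lambda(B)}(X_{m+d})$ is a family of $m+d$ non-intersecting paths from analogous fixed starting points, reaching height $m+d$ and ending at the positions in $B$. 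Reflecting the second family and translating it, the pair becomes a superposition in a common strip of height $m$. The extra $d$ rows used only by the $B$-family form a region where that family is rigid up to an explicit part. The union of end points is exactly $T$, so summing over all disjoint $(A,B)$ with $A\cup B=T$ means summing over all superpositions whose combined end set is $T$.

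Second, I analyse a superposition as a multigraph, in the spirit of Fulmek and Krattenthaler. Each vertex has degree $0$, $2$ or $4$, and the union of the two families decomposes into edge-disjoint ``strings''. Each closed string, or string joining two ordinary endpoints, can be recoloured: swap which family each of its segments belongs to. This preserves the total weight and the non-intersection of each family separately. The key structural claim is the following. Exactly $m$ of the strings are free to be recoloured in two ways; these are the strings attached to the $m$ starting points of the $A$-family paired with starting points of the $B$-family. Every other string is forced, and its colour is determined by its endpoints. This gives the factor $2^m$. After fixing the canonical colouring of the free strings, the endpoints in $T$ are matched consecutively: $t_{2i-1}$ with $t_{2i}$ for the paired ones. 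Hence the canonical $A'$ consists of the even-indexed elements $\mathcal{E}$ and $B'$ of the odd-indexed elements $\mathcal{O}$. Because $|B|-|A|=d$, exactly $\lfloor d/2\rfloor$ pairs $(t_{2k-1},t_{2k})$ end up both in $B'$. Moving the $t_{2k}$'s from $\mathcal{E}$ to $\mathcal{O}$ produces the index set $\{k_1<\dots<k_{\lfloor d/2\rfloor}\}$ on the right-hand side.

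Third, I read the canonical configurations back through LGV. They are exactly pairs of non-intersecting families with end sets $\mathcal{E}\setminus\{t_{2k_j}\}$ and $\mathcal{O}\cup\{t_{2k_j}\}$. This gives the right-hand product of Schur functions, and summing over the choices of $k$'s gives the stated identity. The parity of $d$ only affects whether $t_{2m+d}$ is an unpaired element of $\mathcal{O}$, which is always in $B$.

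The main obstacle is the second step. One must prove that the recolouring is a well-defined, weight-preserving, exactly $2^m$-to-one map onto the canonical configurations. This requires controlling the local behaviour at degree-4 vertices, where both families touch: there one must fix a convention (e.g.\ ``kissing'' rather than crossing) so that recoloured families remain non-intersecting. One must also show that the free strings are exactly $m$ in number, independent of the configuration. I would first attempt this with Fulmek's convention on degree-4 vertices and a planarity argument that strings cannot interleave. If the bookkeeping becomes unwieldy, the fallback is Krattenthaler's algebraic route. There one writes each Schur function as a bialternant, expresses the sum over $(A,B)$ as a minor summation, i.e.\ a Pfaffian via the Ishikawa--Wakayama formula, and then evaluates the Pfaffian of pairwise terms $\frac{x_i-x_j}{x_i+x_j}$-type. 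In that route the factor $2^m$ appears from the diagonal terms of the Pfaffian.
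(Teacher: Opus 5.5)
\paragraph{Verdict.} The paper gives no proof of this statement; it quotes it from \cite{Krat1}. Your argument therefore has to stand on its own, and it does not. There is a genuine gap, located where you suspect.

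\paragraph{Where it fails.} Your reading $X_n\to X_m$, $X_{n+d}\to X_{m+d}$ with nested variables is right, but the first step already fails as stated. The $B$-family cannot be put in a strip of height $m$: in rows $m+1,\dots,m+d$ its east steps carry $x_{m+1},\dots,x_{m+d}$, and it is not rigid there (the right-hand side depends on these variables through $s_{\lambda(\mathcal{O}\cup\{t_{2k_1},\dots\})}(X_{m+d})$). The superposition therefore lives in height $m+d$ and has $d$ extra start points of degree $1$.

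The second step is internally inconsistent: $|\mathcal{E}|=m+\lfloor d/2\rfloor$, so the canonical $A'$ cannot be $\mathcal{E}$ once $d\ge 2$. In the form you describe it is also false. Suppose the $m$ recolourable strings were V-shapes through the shared start points, and the remaining $d$ strings ran from the extra start points to single endpoints. Then non-crossing would force one fixed pairing of $T$ (in the natural picture $\{t_1,t_2\},\dots,\{t_{2m-1},t_{2m}\}$ with $t_{2m+1},\dots,t_{2m+d}\in B$), hence a single term of the $k$-sum. But for $m=1$, $d=2$, $T=\{1,2,3,4\}$ the left-hand side is $x_1x_2x_3+x_1(x_1x_2+x_1x_3+x_2x_3)+x_1^2(x_1+x_2+x_3)+x_1^3=2x_1^3+2x_1(x_1x_2+x_1x_3+x_2x_3)$. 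This is $2$ times the sum of the terms $k_1=1$ and $k_1=2$, so both terms occur.

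You also never fix the rule at vertices where a red and a blue path meet. If each incoming edge there is joined to an outgoing edge, recolouring a single string puts two blue edges into that vertex. If instead the two incoming edges are joined, the strings are no longer monotone and your count of free and forced strings no longer applies. So the exact $2^m$-to-one correspondence, which is the entire content of the theorem, is asserted rather than proved. The Pfaffian fallback is only named.

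\paragraph{A complete proof.} A short algebraic proof exists and needs no Pfaffians. Use $s_{\lambda(A)}(X_m)=\det(x_i^{a_j-1})/\det(x_i^{j-1})$. Let $M$ be the $(2m+d)\times(2m+d)$ matrix whose row $i\le m$ is $\bigl((-1)^jx_i^{t_j-1}\bigr)_{j}$ and whose row $m+k$, $1\le k\le m+d$, is $\bigl(x_k^{t_j-1}\bigr)_{j}$.
\begin{enumerate}
\item The factors $(-1)^j$ cancel the Laplace signs. Expanding along the first $m$ rows shows that $\det M$ equals $(-1)^{\binom{m+1}{2}}$ times the two Vandermonde determinants times the \emph{unsigned} left-hand side.
\item The first $m$ variables are shared. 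For each $i\le m$, replace row $i$ by the old row $m+i$, and row $m+i$ by the sum of the old rows $i$ and $m+i$. This multiplies $\det M$ by $(-1)^m$ and makes row $m+i$ equal to $2x_i^{t_j-1}$ for even $j$ and $0$ for odd $j$.
\item Pull the factor $2^m$ out of these $m$ rows and expand along them. Only column sets $J$ consisting of even indices survive, so every Laplace sign equals $(-1)^{m^2+\binom{m+1}{2}}$. These $J$ are exactly $\{2,4,\dots\}\setminus\{2k_1,\dots,2k_{\lfloor d/2\rfloor}\}$, and the complementary minors use $x_1,\dots,x_{m+d}$.
\end{enumerate}
Comparing the two expansions gives the identity, with overall sign $(-1)^{m+m^2}=1$.
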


\begin{thm}[Theorem $6$ in \cite{Krat1}]\label{thr6} We have
\begin{align}\sum_{0\leq k_1<k_2<\cdots<k_s\leq m}&y^{\sum_{i=1}^sk_i} \prod_{1\leq i<j\leq s}(q^{k_j}-q^{k_i})^2 \prod_{i=1}^s\frac{(x;q)_{k_i}(y;q)_{m-k_i}}{(q;q)_{k_i}(q;q)_{m-k_i}}\notag \\
&=q^{\binom{s}{3}}y^{\binom{s}{2}}\prod_{i=1}^s\frac{(x;q)_{i-1}(y;q)_{i-1}(xyq^{i+s-2};q)_{m-s+1}(q;q)_{i-1}}{(q;q)_{m-i+1}}.\end{align}
\end{thm}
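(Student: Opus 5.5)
The statement just displayed is Theorem 6 of \cite{Krat1} (Krattenthaler's summation), which the paper imports as a tool rather than proving. The cruciform formula of Theorem \ref{main} is what it will ultimately serve. Below I sketch how I would prove the identity independently, and then how it enters the main argument.

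\emph{Reduction to a Hankel determinant.} Write the left-hand side as a symmetric sum over $s$-subsets $\{k_1<\dots<k_s\}\subseteq\{0,\dots,m\}$. Set
\[
w(k)=y^k\frac{(x;q)_k(y;q)_{m-k}}{(q;q)_k(q;q)_{m-k}}.
\]
The summand is $\prod_i w(k_i)\prod_{i<j}(q^{k_j}-q^{k_i})^2$, which is the square of a Vandermonde determinant in the variables $q^{k_i}$. By the Cauchy--Binet (Heine) identity, the sum equals the Hankel determinant $\det_{0\le i,j\le s-1}\bigl(\mu_{i+j}\bigr)$, where the moments are
\[
\mu_r=\sum_{k=0}^m w(k)\,q^{rk}.
\]
Each moment is a terminating ${}_2\phi_1$-type sum. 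Indeed, $y^k(y;q)_{m-k}/(q;q)_{m-k}$ can be rewritten as $(y;q)_m/(q;q)_m$ times $(q^{-m};q)_k/(y^{-1}q^{1-m};q)_k$ times a power of $q$. The $q$-Chu--Vandermonde summation then evaluates $\mu_r$ in closed form: it is a ratio of $q$-shifted factorials involving $(xq^r;q)$ and $(xy;q)$-type factors. These are precisely the moments of the little $q$-Jacobi (equivalently, $q$-Hahn) orthogonal polynomials.

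\emph{Evaluating the determinant.} For moments of a classical orthogonal polynomial family, the Hankel determinant equals
\[
\prod_{i=0}^{s-1}h_0\,\beta_1\cdots\beta_i,
\]
where the $\beta_i$ are the coefficients in the three-term recurrence. Equivalently, it is $\prod_i h_i$, with $h_i$ the squared norms of the $q$-Hahn polynomials, and these norms are known explicitly. Substituting them and telescoping the products should yield
\[
q^{\binom{s}{3}}y^{\binom{s}{2}}\prod_{i=1}^s\frac{(x;q)_{i-1}(y;q)_{i-1}(xyq^{i+s-2};q)_{m-s+1}(q;q)_{i-1}}{(q;q)_{m-i+1}}.
\]

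\emph{Main obstacle.} The hard part is matching normalizations: identifying the correct $q$-Hahn parameters and tracking the powers of $q$ and $y$ that produce $q^{\binom{s}{3}}y^{\binom{s}{2}}$. As a fallback, I would prove the identity by induction on $s$ using Dodgson condensation on the Hankel determinant. Both the determinant and the claimed product satisfy the same Desnanot--Jacobi recurrence in $(s,m)$, so they agree once the base cases $s=0$ and $s=1$ are checked. The case $s=1$ is exactly the $q$-Chu--Vandermonde sum.

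\emph{Role in Theorem \ref{main}.} In the proof of the main theorem, I expect the identity to be used as follows.
\begin{enumerate}
\item Apply Mega-Sandwich Lemmas \ref{megalem1} and \ref{megalem2} to the two Aztec-rectangle arms. This turns the generating function into a sum over dent positions of products of two weighted semihexagon tiling counts, evaluated by Lemmas \ref{semilem1} and \ref{semilem2}.
\item Collapse that sum with Theorem \ref{thr5}. This leaves a sum over $k_1<\dots<k_s$, with $s=\lfloor|c-a|/2\rfloor$, of Vandermonde-squared terms.
\item Evaluate this final sum with Theorem \ref{thr6} in base $q^2$. This explains the $\Hf_{q^2}$ and $(\cdot;q^2)$ factors in \eqref{maineq1} and \eqref{maineq2}.
\end{enumerate}
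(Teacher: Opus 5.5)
The paper does not prove this statement; it imports it from \cite{Krat1}. Your route (Heine's formula, then norms of orthogonal polynomials) is a legitimate way to prove identities of this kind. However, the step you leave as ``should yield'' is exactly where it fails, because the identity as printed is false for $s\ge 3$.

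Take $s=3$, $m=2$. The sum has the single term $(k_1,k_2,k_3)=(0,1,2)$, and the factor $(q^2-q)^2$ in the squared Vandermonde makes the left side $q^2y^3(1-x)^2(1-y)^2(1-xq)(1-yq)$. The right side is $q\,y^3(1-x)^2(1-y)^2(1-xq)(1-yq)$.

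Carrying out your plan gives the correct power. The reflection $k\mapsto m-k$ changes the squared Vandermonde only by the constant $q^{ms(s-1)}$, and it turns $w$ into
\[
y^m\frac{(x;q)_m}{(q;q)_m}\cdot\frac{(y;q)_k\,(q^{-m};q)_k}{(q;q)_k\,(q^{1-m}/x;q)_k}\Bigl(\frac{q}{xy}\Bigr)^{k}.
\]
This is the $q$-Hahn weight with $\alpha=y/q$, $\beta=x/q$, $N=m$ in the variable $q^{-k}$. The known $q$-Hahn norms then give, for the monic orthogonal polynomials in the variable $q^k$,
\[
h_n=y^n q^{n(n-1)}\,\frac{(x;q)_n\,(y;q)_n\,(q;q)_n\,(xy;q)_{m+n}}{(q;q)_{m-n}\,(xyq^{n-1};q)_n\,(xy;q)_{2n}}.
\]
The product $\prod_{n=0}^{s-1}h_n$ is the printed right-hand side with $q^{\binom{s}{3}}$ replaced by $q^{2\binom{s}{3}}$, which is the correct identity. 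A one-term check would have caught this.

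It also matters for your item 3. In base $q^2$, the factor $q^{2\binom{s}{3}}$ in the proof of Lemma \ref{Schurlem2} should be $q^{4\binom{s}{3}}$. So the term $-\frac{s(s-1)(s-2)}{3}$ should disappear from both exponents in Lemma \ref{Schurlem2}, and correspondingly $-\frac{2s}{3}+s^2-\frac{s^3}{3}$ should disappear from $Q$. For instance, for $m=1$, $d=6$ the left side of Lemma \ref{Schurlem2} equals $2q^7\prod_{j=1}^{6}(1+q^j)$. This agrees with the lemma's right side only with exponent $7$, not the printed $5$.

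Two further claims in your sketch do not hold as stated.

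First, the power moments are not $q$-Chu--Vandermonde sums for $r\ge 1$. You have $\mu_r=\frac{(y;q)_m}{(q;q)_m}\,{}_2\phi_1\bigl(x,q^{-m};q^{1-m}/y;q,q^{r+1}\bigr)$, and $q$-Chu--Vandermonde needs argument $q$ or $q/(xy)$. The norm route does not need closed-form moments, so this is harmless there. It does sink your fallback as you set it up: Desnanot--Jacobi applied to $(\mu_{i+j})$ brings in $\det(\mu_{i+j+1})$ and $\det(\mu_{i+j+2})$. These are the same sum with $y^{\sum k_i}$ replaced by $(yq)^{\sum k_i}$ or $(yq^2)^{\sum k_i}$ but $(y;q)_{m-k_i}$ unchanged. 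That sum is not an instance of the statement and has no product form; already for $s=m=1$ it is $(1-y+yq-xyq)/(1-q)$.

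Second, little $q$-Jacobi is not equivalent to $q$-Hahn; it is the infinite-lattice limit $N\to\infty$. The weight here matches $q$-Hahn only after the reflection above.
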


Denote by $[n]:=\{1,2,\dots,n\}$ the set of the first $n$ positive integers. For any nonempty set of integers $A$, we denote by $n+A:=\{n+a \mid a \in A\}$ the shifted version of $A$.  We will need the following consequences of the preceding theorems of Krattenthaler.

\begin{lem}\label{Schurlem2} Let $m$ be a positive integer and $d$ a nonnegative integer. 
If $d$ is even, say $d=2s$, then %({\color{red}simplify})
\begin{align}
&\sum_{(A,B)}s_{\lambda(A)}(q,q^2,\dots,q^{m})s_{\lambda(B)}(q,q^2,\dots,q^{m+d})\notag\\
%%&= 2^m \frac{(1-q)^s\cdot q^{2s(s-1)+s}\cdot q^{m(m+d)} q^{2\binom{s}{3}} q^{\binom{s}{2}} q^{4\binom{m+s+1}{3}}}{q^{\binom{m+1}{3}+\binom{m+d+1}{3}} q^{\binom{m+s}{2}}} \frac{\Hf_{q^2}(m+s)^2}{ \Hf_q(m)\Hf_q(m+d)}\notag\\
%%&\quad \quad \times \prod_{i=1}^{s} \frac{(q^3;q^2)_{i-1}(q;q^2)_{i-1}(q^{2(i+s)};q^2)_{m}(q^2;q^2)_{i-1}}{(q^2;q^2)_{m+s-i}},\\
%%&\quad=2^{m}(1-q)^sq^{m/6 + m^2/2 + m^3/3 - (2 s)/3 + m s + m^2 s + s^2 - s^3/3} \frac{\Hf_{q^2}(m+s)^2}{ \Hf_q(m)\Hf_q(m+2s)}\notag\\
%%&\quad \quad \times \prod_{i=1}^{s} \frac{(q^{3};q^2)_{i-1}(q;q^2)_{i-1}(q^{2(i+s)};q^2)_{m}(q^2;q^2)_{i-1}}{(q^2;q^2)_{m+s-i}}\\
&\quad=2^{m}(1-q)^sq^{\frac{m(m+1)(2m+6s+1)}{6}-\frac{s(s-1)(s-2)}{3}}\times \frac{\Hf_{q^2}(m+s)^2}{ \Hf_q(m)\Hf_q(m+2s)}\notag\\
&\quad \quad \times \prod_{i=1}^{s} \frac{(q^{3};q^2)_{i-1}(q;q^2)_{i-1}(q^{2(i+s)};q^2)_{m}(q^2;q^2)_{i-1}}{(q^2;q^2)_{m+s-i}},
\end{align}
%(-1)^{-\binom{m}{2}-\binom{m+2s}{2}}=(-1)^s
where the sum on the left-hand side is taken over all  pairs of disjoint sets $(A,B)$ whose union is $[2m+d]$ and whose cardinalities are given by $|A|=m$ and $|B|=m+d.$	

If $d$ is odd, say $d=2s+1$, then %({\color{red}simplify})
\begin{align}
&\sum_{A,B}s_{\lambda(A)}(q,q^2,\dots,q^{m})s_{\lambda(B)}(q,q^2,\dots,q^{m+d})\notag\\
%%&=2^{m} \frac{(1-q)^{2s}\cdot q^{2s(s-1)+3s}\cdot q^{m(m+d)}q^{2\binom{s}{3}}q^{3\binom{s}{2}}q^{\binom{m+s+1}{2}+4\binom{m+s+1}{3}}}{q^{\binom{m+1}{3}+\binom{m+d+1}{3}}}\\
%%&\frac{\Hf_{q^2}(m+s)H_{q^2}(m+s+1)}{ \Hf_q(m)\Hf_q(m+d)}\notag\\
%%&\quad \quad \times \prod_{i=1}^{s} \frac{(q^3;q^2)_{i-1}(q^3;q^2)_{i-1}(q^{2(i+s+1)};q^2)_{m}(q^2;q^2)_{i-1}}{(q^2;q^2)_{m+s-i}}.\\
%%&=2^{m}(1-q)^{2s}q^{(2 m)/3 + m^2 + m^3/3 - (2 s)/3 + m s + m^2 s + s^2 - s^3/3} \\
%%&\frac{\Hf_{q^2}(m+s)H_{q^2}(m+s+1)}{ \Hf_q(m)\Hf_q(m+2s+1)}\notag\\
%%&\quad \quad \times \prod_{i=1}^{s} \frac{(q^{3};q^2)_{i-1}(q^3;q^2)_{i-1}(q^{2(i+s+1)};q^2)_{m}(q^2;q^2)_{i-1}}{(q^2;q^2)_{m+s-i}}\\
&=2^{m}(1-q)^{2s}q^{\frac{m(m+1)(m+3s+2)}{3}-\frac{s(s-1)(s-2)}{3}} \times\frac{\Hf_{q^2}(m+s)H_{q^2}(m+s+1)}{ \Hf_q(m)\Hf_q(m+2s+1)}\notag\\
&\quad \quad \times \prod_{i=1}^{s} \frac{(q^{3};q^2)_{i-1}(q^3;q^2)_{i-1}(q^{2(i+s+1)};q^2)_{m}(q^2;q^2)_{i-1}}{(q^2;q^2)_{m+s-i}}
\end{align}
 \end{lem}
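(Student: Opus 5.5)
We will deduce Lemma \ref{Schurlem2} from Theorem \ref{thr5} and Theorem \ref{thr6}. Apply Theorem \ref{thr5} with $T=[2m+d]$ and $X_m=(q,\dots,q^m)$, $X_{m+d}=(q,\dots,q^{m+d})$. Then $t_i=i$, so $\mathcal{E}=\{2,4,\dots\}$ and $\mathcal{O}=\{1,3,\dots\}$. The left side becomes $2^m$ times a sum over $1\le k_1<\dots<k_{\lfloor d/2\rfloor}\le m+\lfloor d/2\rfloor$. In each term, one even-indexed set with $m$ elements is paired with an odd-indexed set augmented by the even numbers $2k_1,\dots,2k_s$.

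Next we evaluate the two Schur functions in each term by the product formula (\ref{Gp}). For the first factor, $A=\mathcal{E}\setminus\{2k_i\}$ has the form $2\cdot C$, where $C=[m+s]\setminus\{k_1,\dots,k_s\}$. Its Vandermonde product $\prod(q^{2c_j}-q^{2c_i})$ is a $q^2$-Vandermonde in $[m+s]$ with the indices $k_i$ deleted. We rewrite it as the full Vandermonde $\prod_{1\le i<j\le m+s}(q^{2j}-q^{2i})$ divided by the factors involving the deleted $k_i$, and then multiplied back by $\prod_{i<j}(q^{2k_j}-q^{2k_i})$. The factors involving a single deleted $k$ give $\prod_{j\ne k}(q^{2j}-q^{2k})$, which equals a power of $q$ times $\pm(q^2;q^2)_{k-1}(q^2;q^2)_{m+s-k}$.

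The second factor, with $B=\mathcal{O}\cup 2\{k_i\}$, is treated the same way. Its Vandermonde splits into three parts: the odd-odd part, which is constant; the odd-even cross terms $\prod_{j}(q^{2k}-q^{2j-1})$; and the even-even part $\prod_{i<j}(q^{2k_j}-q^{2k_i})$. The cross term equals a power of $q$ times ratios of the form $(q;q^2)_{k}(q;q^2)_{m+s+1-k}$-type products, after care with signs.

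Multiplying the two factors produces $\prod_{i<j}(q^{2k_j}-q^{2k_i})^2$, together with single-index factors of the form $(x;q^2)_{k}(y;q^2)_{M-k}/\bigl((q^2;q^2)_{k}(q^2;q^2)_{M-k}\bigr)$ and a linear power $q^{c\sum k_i}$. We shift $k_i\mapsto k_i-1$ so that the range becomes $0\le k_1<\cdots<k_s\le m+s-1$. This matches Theorem \ref{thr6} with base $q^2$, upper limit $m+s-1$ (or $m+s$ in the odd case), and $(x,y)=(q^3,q)$ in the even case and $(q^3,q^3)$ in the odd case. These choices are dictated by the factors $(q^3;q^2)_{i-1}(q;q^2)_{i-1}$ resp.\ $(q^3;q^2)_{i-1}^2$ in the target formula. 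The linear exponent must come out exactly as $y^{\sum k_i}$, which is a built-in check of the identification.

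Applying Theorem \ref{thr6} gives the product $\prod_{i=1}^s$ in the statement. The $k$-independent pieces are the full $q^2$-Vandermonde, the odd-odd Vandermonde, and the denominators $\prod_{i<j}(q^j-q^i)$ over $[m]$ and $[m+d]$. Via $\prod_{1\le i<j\le N}(q^j-q^i)=q^{\binom{N+1}{3}}\Hf_q(N)$ up to sign, these assemble into $\Hf_{q^2}(m+s)^2/(\Hf_q(m)\Hf_q(m+2s))$, with $\Hf_{q^2}(m+s)\Hf_{q^2}(m+s+1)$ in the odd case, together with the factor $(1-q)^s$ (resp.\ $(1-q)^{2s}$) from converting odd-index products.

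The main obstacle is bookkeeping. We must get the exact $q$-exponent, including the Schur prefactors $q^{\sum(s_i-i)}$ and the $q^{\binom{s}{3}},y^{\binom{s}{2}}$ terms, and verify that all signs cancel, since the identity has positive coefficients. We would fix the identification of parameters first by checking the small cases $s=0$ and $s=1$ ($m=1$) directly, and only then carry out the exponent computation in general.
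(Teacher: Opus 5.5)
Your proposal is correct and is essentially the paper's own proof: Theorem \ref{thr5} with $T=[2m+d]$, then the Vandermonde factorization of the two Schur functions isolating $\prod_{i<j}(q^{2k_j}-q^{2k_i})^2$ and the single-index factors, then the shift $k_i\mapsto k_i-1$ and Theorem \ref{thr6} in base $q^2$ with $(x,y)=(q^3,q)$ resp.\ $(q^3,q^3)$, and finally assembling the $k$-independent Vandermonde products into the $\Hf$-factors. One small correction: in the odd case the shifted range is still $0\le k_1<\cdots<k_s\le m+s-1$, because Theorem \ref{thr5} gives $k_i\le m+\lfloor d/2\rfloor=m+s$; hence Theorem \ref{thr6} is applied with upper limit $m+s-1$ in both cases, which is exactly what produces $(q^{2(i+s+1)};q^2)_m$ and $(q^2;q^2)_{m+s-i}$.
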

 %(-1)^{m+s-\binom{m}{2}-\binom{m+2s+1}{2}}=1
 We also need the following lemma to simplify our formulas.
\begin{lem}\label{Schurlem} For any positive integers $m,n,a,b$
\begin{align}
s_{\lambda(A')}&(q,q^2,\dots, q^{m+n+a})s_{\lambda(B')}(q,q^2,\dots, q^{m+n+b}) \notag \\
&=q^{abn+(a+b)\binom{n+1}{2}} \prod_{i=1}^{a+b}(q^i;q)_m (q^{a+b-i+1};q)_n \cdot  \prod_{j=1}^{n} (q^{a+b+j};q)_m^2.\notag \\
& \times \frac{\Hf_q(a)\Hf_q(b)\Hf_q(m)^2\Hf_q(n)^2}{\Hf_q(m+n+a)\Hf_q(m+n+b)} \cdot  s_{\lambda(A)}(q,q^2,\dots, q^{a})s_{\lambda(B)}(q,q^2,\dots, q^{b}),
\end{align}
%where 
%\begin{align*}
%R&(m,n,a,b)=abn+(a+b)\binom{n+1}{2}
%&=n(a+b)+\binom{a+1}{3}+\binom{b+1}{3}-\binom{m+n+a+1}{3}\\
%&-\binom{m+n+b+1}{3}+m\binom{a}{2}+m\binom{b}{2}+2(m+a+b)\binom{n}{2}+2\binom{m+1}{3}\\
%&+2\binom{n+1}{3}+(a+b)\binom{m+1}{2}+mn(a+b)+n\binom{a+b+1}{2}+m(m+1)n\\
%&=abn+(a+b)\binom{n+1}{2}.
%\end{align*}
%and 
%\begin{align*}
%K=\frac{H(a)H(b)H(n)^2H(m+n)H(m+n+d)^2 }{H(m+n+a)H(m+n+b)H(n+d)H(m+d)^2}.
%&\frac{H(a)H(b)H(m)^2H(n)^2 }{H(m+n+a)H(m+n+b)}\\
%&\times \frac{H(m+n)}{H(m)^2} \frac{H(n+d)}{H(d)^2} \left(\frac{H(m+n+d)/H(n+d)}{H(m+d)/H(d)}\right)^2.
%\end{align*}
where  $A'=[m]\cup(m+A)\cup(m+a+b+[n])$ and $B'=[m]\cup (m+B)\cup(m+a+b+[n])$ with $A$ and $B$ are two disjoint sets whose union is $[a+b]$ and whose cardinalities are  $|A|=a$ and $|B|=b$.
\end{lem}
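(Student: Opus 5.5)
The plan is to prove Lemma \ref{Schurlem} by evaluating both sides with the product formula (\ref{Gp}) and comparing Vandermonde-type products. For a set $X=\{x_1<\dots<x_k\}$, (\ref{Gp}) gives
\[
s_{\lambda(X)}(q,\dots,q^k)=q^{\sum_i (x_i-i)}\frac{\prod_{i<j}(q^{x_j}-q^{x_i})}{\prod_{i<j}(q^j-q^i)}.
\]
The denominator is $q^{\sum_{i<j} i}\prod_{j}(q;q)_{j-1}$, which is $\Hf_q(k)$ up to a power of $q$. This explains where $\Hf_q(m+n+a)$, $\Hf_q(m+n+b)$ appear on the left and $\Hf_q(a)$, $\Hf_q(b)$ appear on the right. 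We apply this to $A'=[m]\cup(m+A)\cup(m+a+b+[n])$ with $k=m+n+a$, and to $B'$ with $k=m+n+b$.

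Next we split the Vandermonde product $\prod_{x<y\in A'}(q^y-q^x)$ according to the three blocks $[m]$, $m+A$, and $m+a+b+[n]$.

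\emph{Within blocks.} The pairs inside $[m]$ give $\prod_{i<j\le m}(q^j-q^i)$, which is $\Hf_q(m)$ times a power of $q$. The block $m+a+b+[n]$ gives $\Hf_q(n)$ times a power of $q$. The pairs inside $m+A$ give $\prod_{x<y\in A}(q^y-q^x)$ times $q^{m\binom{a}{2}}$, which is exactly the Vandermonde of $A$ needed for $s_{\lambda(A)}(q,\dots,q^a)$.

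\emph{Across $[m]$ and $m+a+b+[n]$.} This contributes $\prod_{i\le m}\prod_{j\le n}(q^{m+a+b+j}-q^i)$. After factoring out $q^i$, this is $\prod_j (q^{a+b+j};q)_m$ up to a $q$-power. Since the same factor occurs for $B'$, we get the square $\prod_j(q^{a+b+j};q)_m^2$.

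\emph{Mixed with $A$.} These factors depend on $A$ individually:
\[
\prod_{x\in A}\prod_{i\le m}(q^{m+x}-q^i)\quad\text{and}\quad \prod_{x\in A}\prod_{j\le n}(q^{m+a+b+j}-q^{m+x}).
\]
Up to powers of $q$, these equal $\prod_{x\in A}(q^{x};q)_m$ and $\prod_{x\in A}(q^{a+b-x+1};q)_n$. The key observation is that multiplying the $A'$ and $B'$ contributions merges $A\cup B=[a+b]$. The product therefore becomes $\prod_{i=1}^{a+b}(q^i;q)_m(q^{a+b-i+1};q)_n$, independent of the particular partition $(A,B)$. The analogous statement for $B'$ gives the $B$-Vandermonde.

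Collecting everything, the left side equals the stated product of $q$-Pochhammers and $\Hf_q$-ratios times $s_{\lambda(A)}(q,\dots,q^a)\,s_{\lambda(B)}(q,\dots,q^b)$, up to an overall power of $q$.

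The main obstacle is the bookkeeping of that power of $q$. It collects contributions from several sources:
\begin{itemize}
\item the prefactors $q^{\sum(x_i-i)}$ for $A'$, $B'$, $A$, $B$;
\item the $q$-powers extracted from the Vandermonde denominators, which convert them into $\Hf_q$;
\item the $q^i$ and $q^{m+x}$ pulled out of each cross factor;
\item the shift $q^{m\binom{a}{2}}$ inside the $A$-block, and its analogue for $B$.
\end{itemize}
One must verify that everything collapses to $abn+(a+b)\binom{n+1}{2}$. The $A$-dependent exponents, such as $\sum_{x\in A}x$, have to cancel or combine with the $B$-terms into symmetric functions of $[a+b]$. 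I would check this by writing each sum in closed form in terms of $\sum_{x\in A}x+\sum_{x\in B}x=\binom{a+b+1}{2}$ and simplifying. As a sanity check, I would test small cases such as $m=0$ or $n=0$, and $a=b=1$.
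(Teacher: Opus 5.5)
Your proposal follows the paper's proof essentially step for step: write each Schur function via (\ref{Gp}) as $q^{\sum_i(x_i-i)}\prod_{i<j}(q^{x_j}-q^{x_i})/\prod_{i<j}(q^j-q^i)$, split the Vandermonde products for $A'$ and $B'$ into the three blocks plus cross products, and use $A\sqcup B=[a+b]$ to make the mixed factors independent of $(A,B)$; the $q$-exponent does then collapse to $abn+(a+b)\binom{n+1}{2}$. The one thing you gloss over is signs, since your factorizations hold up to a sign as well as a power of $q$ (for instance $\prod_{1\le i<j\le k}(q^j-q^i)=(-1)^{\binom{k}{2}}q^{\binom{k+1}{3}}\Hf_q(k)$, and each element of $A\cup B$ contributes $(-1)^{m+n}$ through the mixed products), but the resulting $(-1)^{(a+b)(m+n)}$ cancels the sign coming from the ratio of the four $\Hf_q$-denominators, exactly as in the paper.
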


The proofs of Lemmas \ref{Schurlem2} and \ref{Schurlem}   are provided in Section \ref{Sec:Complete}.

\section{Proof of the main theorem (Theorem \ref{main})} \label{Sec:ProofMatching}

\begin{proof}[Proof of Theorem \ref{main}.] 
To find the tiling generating function of the cruciform region $\mathcal{C}_{m,n}^{a,b,c,d}$ in Theorem \ref{main}, we will compute the matching generating function of its dual graph. We now assign weights to the edges of the cruciform graph $C=C_{m,n}^{a,b,c,d}$ induced by the weight assignment on the cruciform region. The reader should compare the two depictions of the same graph in Figures \ref{DC} and \ref{DCW}. We label the shaded diamonds in the graph in the same way as in the weighted Aztec rectangle graph $AR_{m,n}(\wt^{e,f}_{h,g}(q))$, as follows. The bottom-left diamond is labeled $0$, and the label increases by $1$ whenever we move one step to the right or one step upward. Although some ``partial" diamonds on the boundary of the middle part contribute only one or two edges to the graph, we label them in the same way. The diamond with label $k$ has edge weights $e,f,gq^k,hq^k$ in clockwise order, starting from the northwest edge. For the partial diamonds in the middle part, we consider only the weights of the edges that belong to the graph $C$. For example, the partial diamond of label $c-d-1=2$ on the leftmost side of the graph only contributes one edge of weight $f$ (its northeast edge) to the graph $C$. We denote this weighted cruciform graph by $C^{a,b,c,d}_{m,n}(\wt^{e,f}_{h,g}(q))$. The remainder of the proof consists of computing the matching generating function $\M(C^{a,b,c,d}_{m,n}(\wt^{e,f}_{h,g}(q)))$in the following steps:
\begin{itemize}
\item We first use our transformation in the Mega-Sanwich Lemmas (Lemmas \ref{megalem1} and \ref{megalem2}) to transform our cruciform graph into a connected sum of two semi-honeycomb graphs.
\item Next, we express the matching generating function of the resulting graph as a Schur-function sum, using Lemma \ref{semilem2}
\item Finally, we apply Lemma \ref{semilem2} to obtain the desired product formula for the matching generating function.
\end{itemize}

\begin{figure}[ht]\centering
		\includegraphics[width = 14cm]{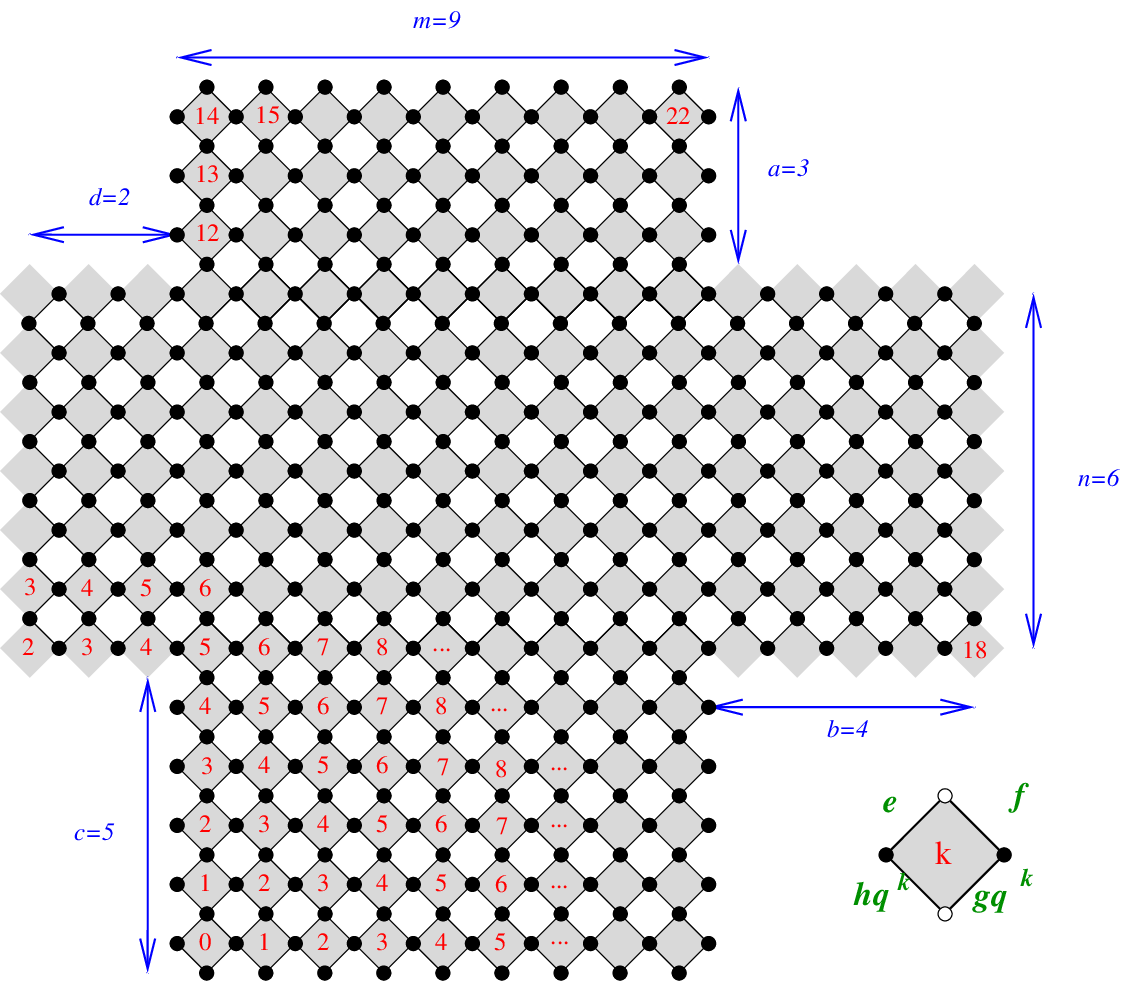}
		\caption{Assigning weights to the edges of the cruciform graph $C_{9,6}^{3,4,5,2}$.}
		\label{DCW}
\end{figure}

Let us proceed with the above steps.

\medskip

\textbf{Step 1.}   \emph{Transform the cruciform graph $C^{a,b,c,d}_{m,n}(\wt^{e,f}_{h,g}(q))$.}

\medskip

We divide the cruciform graph  $C= C^{a,b,c,d}_{m,n}(\wt^{e,f}_{h,g}(q))$ into three Aztec rectangle graphs, together with two red zigzags, using four dotted horizontal lines, as in Figure \ref{threeapp}. The top part is an Aztec rectangle graph $AR_{a,m}$,  the middle part is an Aztec rectangle $AR_{n,m+b+d+1}$, and the bottom part is an Aztec rectangle $AR_{c,m}$.

 Each  Aztec rectangle graph carries a weight assignment of the type appearing in Lemma \ref{lm2} or Lemma \ref{lem3}. 
 By investigating the edge weights of the diamond in the lower-left corner of each Aztec rectangle graph, we can determine the particular weight assignments. This is immediate for the top and the bottom Aztec rectangle graphs based on the labels of the shaded diamonds, as in Figure \ref{DCW}: the lower-left diamond of the bottom Aztec rectangle graph has label $0$, the lower-left diamond of the top Aztec rectangle graph has label $c+n+1$. Thus the bottom graph has weight assignment $\wt_{h,g}^{e,f}(q)$, and the top graph has similar weight assignment $\wt_{hq^{c+n+1},gq^{c+n+1}}^{e,f}(q)$.
 
 However, the lower left diamond of the middle Aztec rectangle graph is \emph{not} a shaded diamond in Figure \ref{DCW}. It is the white diamond adjacent to four shaded diamonds with labels $c-d$, $c-d+1$, $c-d$, $c-d-1$ (in cyclic order, starting from the northwest one), and its edge weights are determined by these four adjacent shaded diamonds. We then obtain the weight assignment of the middle graph as $\overline{\wt}_{hq^{c-d+1},gq^{c-d}}^{e,f}$. 

\begin{figure}[ht]\centering
		\includegraphics[width = 12cm]{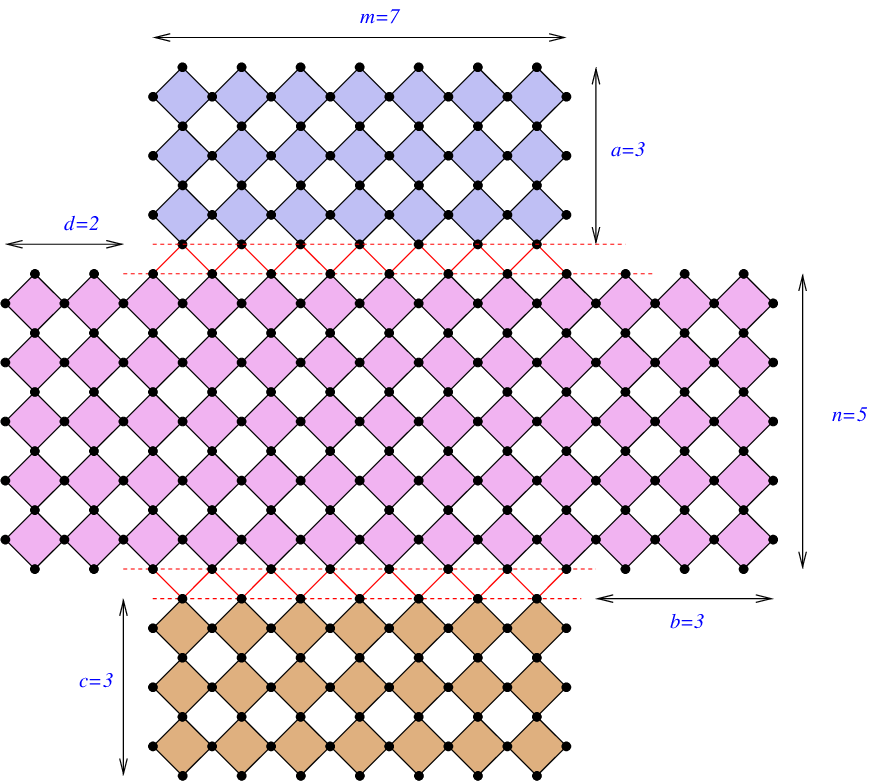}
		\caption{Dividing the cruciform graph $C_{7,5}^{3,3,3,2}$ into three parts.}
		\label{threeapp}
\end{figure}

Next, we apply the Mega-Sandwich Lemma 1 (Lemma \ref{megalem1}) to the top and the bottom Aztec rectangle graphs, and the Mega-Sandwich Lemma 2 (Lemma \ref{megalem2}) to the middle part. We obtain the connected sum $D$ of three $K$-typed graphs as in Figure \ref{threeapp2}. In particular, the top Aztec rectangle graph $AR_{a,m}(\wt_{hq^{c+n+1},gq^{c+n+1}}^{e,f}(q))$ is replaced by the graph $^|_|K_{a,m}(\wt_{hq^{c+n+1},gq^{c+n+1}}^{e,f}(q))$ (see the part above the horizontal dotted lines); the middle Aztec rectangle graph $AR_{n,m+b+d+1}(\overline{\wt}_{hq^{c-d+1},gq^{c-d}}^{e,f})$ is replaced by the graph $^|_|K_{n,m+b+d+1}(\overline{\wt}_{hq^{c-d+1},gq^{c-d}}^{e,f})$ (see the part between the upper two dotted lines and the lower dotted lines); the bottom Aztec rectangle graph $AR_{a,m}(\wt_{h,g}^{e,f}(q))$ is replaced by $^|_|K_{a,m}(\wt_{h,g}^{e,f}(q))$ (see the part below the four horizontal dotted lines). Collecting the multiplicative factors arising from the three transformations, we obtain 
\begin{align}\label{maineqb}
\M(C)%%=&\prod_{i=1}^{c} (egq^{i-1}+hf)^{c-i+1} \cdot q^{\binom{c}{2}+\binom{c+1}{3}}\notag\\
%%&\times \prod_{i=1}^{n} (egq^{c-d}+hq^{c-d+1}fq^{i-1})^{n-i+1} \cdot q^{(m+b+d)n(n+1)/2}\notag\\
%%&\times \prod_{i=1}^{a} (e\cdot gq^{n+c+1} \cdot q^{i-1}+hq^{n+c+1}f)^{a-i+1} \cdot q^{\binom{a}{2}+\binom{a+1}{3}} \cdot \M(D)\notag\\
=&\prod_{i=1}^{c} (egq^{i-1}+hf)^{c-i+1}   \prod_{i=1}^{n} (eg+hfq^{i})^{n-i+1}  \prod_{i=1}^{a} (eg q^{i-1}+hf)^{a-i+1}\notag\\
&\times q^{\binom{c}{2}+\binom{c+1}{3}+\binom{a}{2}+\binom{a+1}{3}+(m+b+d)n(n+1)/2+(n+c+1)\binom{a+1}{2}+(c-d)\binom{n+1}{2}} \cdot \M(D).
\end{align}

\begin{figure}[ht]\centering
		\includegraphics[width = 10cm]{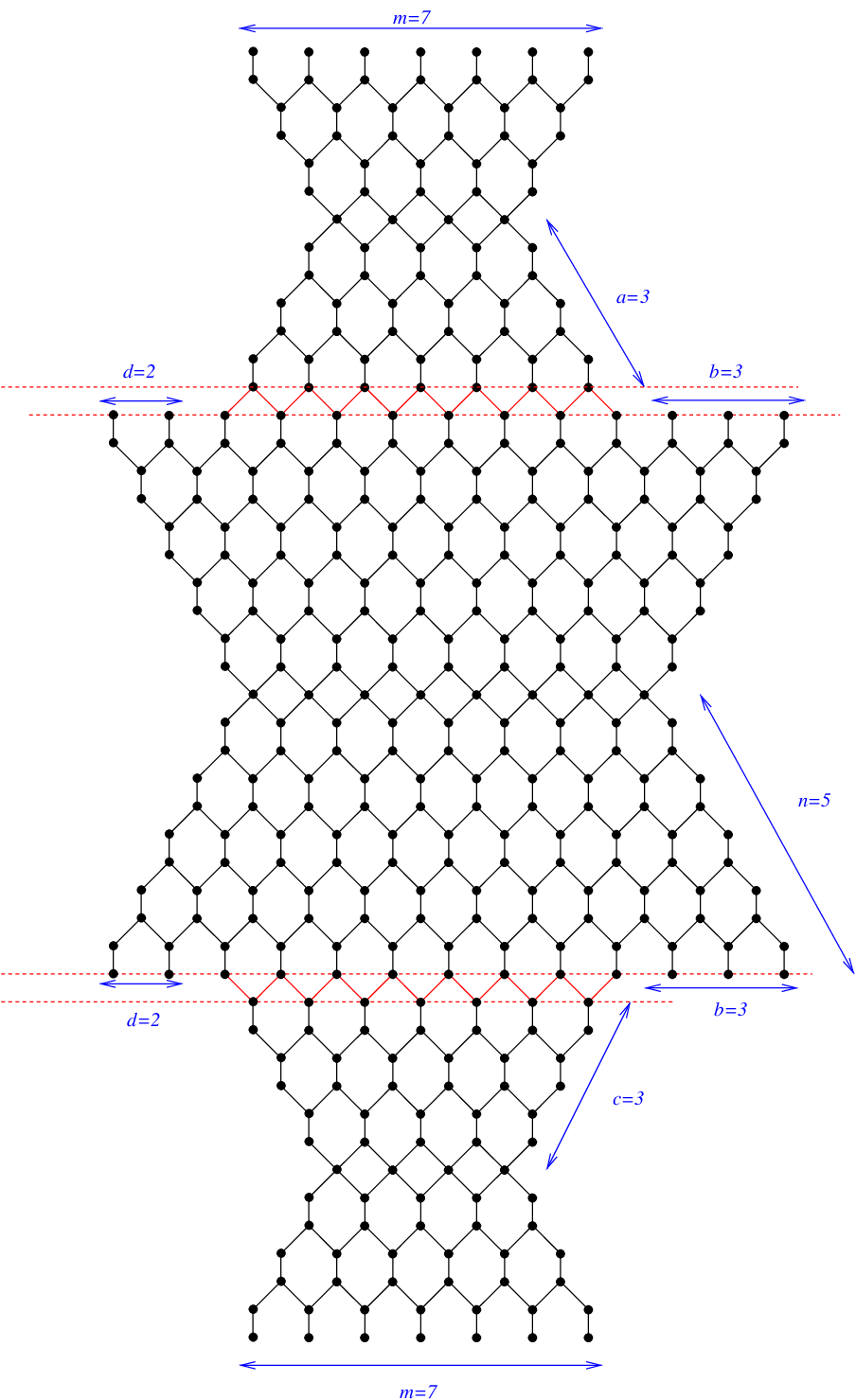}
		\caption{Graph $D$ is obtained by applying Mega-sandwich Lemmas to the three parts of the cruciform graph in Figure \ref{threeapp}.}
		\label{threeapp2}
\end{figure}

Next, we remove vertical forced edges of weight $1$ from the graph $D$ (indicated by circled edges in the graph in Figure \ref{threeapp3}) to obtain graph $E$ (see the graph bounded by the bold contour in Figure \ref{threeapp3}). Graph $E$ is the connected sum of two honeycomb graphs: the honeycomb graph $H_1$ of side-lengths $m-a,a+b+1,n-b,m+b+d-n+1,n-d,a+d+1$ and the honeycomb graph $H_2$ of side-lengths $m+b+d-n+1,n-b,b+c+1,m-c,c+d+1,n-d$; the connected sum is taken along the sides of length $m+b+d-n+1$ of the two honeycomb graphs (the sides running along the horizontal dotted line).

\begin{figure}[ht]\centering
		\includegraphics[width = 10cm]{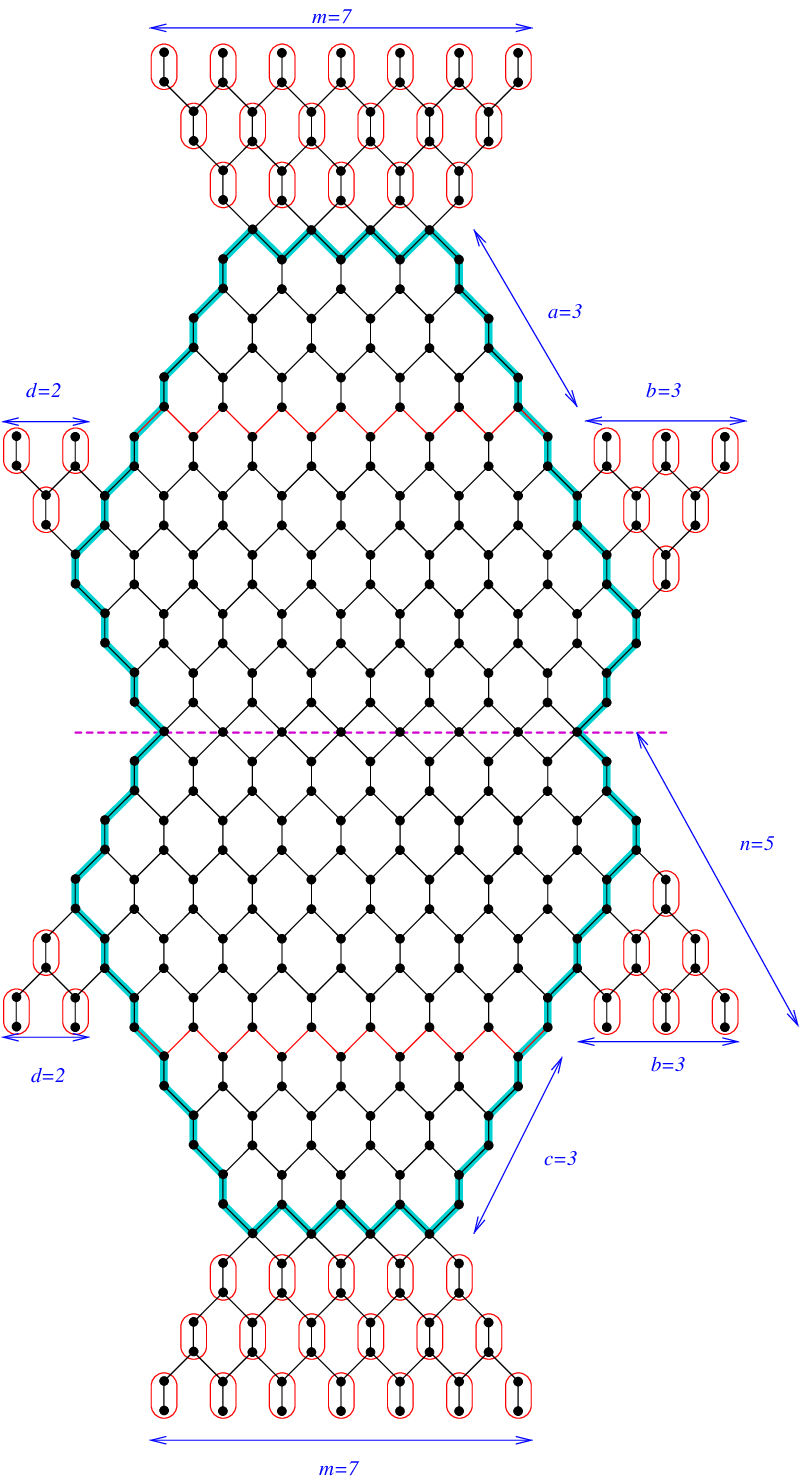}
		\caption{Graph $E$ is obtained by removing forced edges from graph $D$ in Figure \ref{threeapp2}.}
		\label{threeapp3}
\end{figure}

The graph $E$ can also be obtained by removing the forced edges from a different graph $F$, as shown in Figure \ref{threeapp4} (the forced edges are also indicated by circled ones).  The graph $F$ is the connected sum of two weighted semi-honeycomb graphs with defects. More precisely, the top part is the semi-honeycomb graph $SH_1=SH_{n+a+1,m-a}$ with the bottom vertices in the positions $1,2,\dots,n-d, \,m+b+2, m+b+3,\dots, m+n+1$ (listed from left to right) removed, and the bottom part is the upside-down version of the semi-honeycomb graph $SH_2=SH_{n+c+1,m-c}$ with bottom vertices in the same positions removed. The connected sum is taken over the remaining bottom vertices of the two semi-honeycombs.

\begin{figure}[ht]\centering
		\includegraphics[width = 14cm]{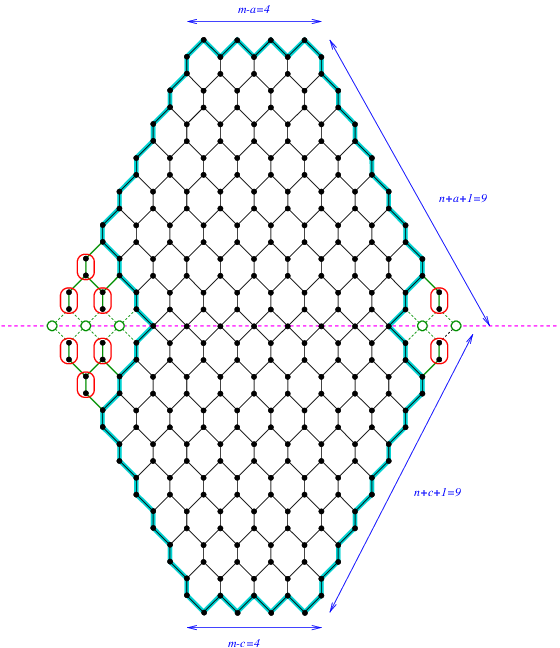}
		\caption{Graph $E$ (in Figure \ref{threeapp3} is also obtained by removing forced edges from graph $F$.}
		\label{threeapp4}
\end{figure}

Transformations from $D$ to $E$ and from $ F$ to $E$ introduce no additional factor, as we removed only forced edges of weight $1$. Thus $\M(D)=\M(E)=\M(F)$. Therefore, by (\ref{maineqb}), we have 
\begin{align}\label{maineqc}
\M(C)=&\prod_{i=1}^{c} (egq^{i-1}+hf)^{c-i+1}   \prod_{i=1}^{n} (eg+hfq^{i})^{n-i+1}  \prod_{i=1}^{a} (eg  q^{i-1}+hf)^{a-i+1}\notag\\
&\times q^{\binom{c}{2}+\binom{c+1}{3}+\binom{a}{2}+\binom{a+1}{3}+(m+b+d)\binom{n+1}{2}+(n+c+1)\binom{a+1}{2}+(c-d)\binom{n+1}{2}} \cdot \M(F).
\end{align}

\medskip

\textbf{Step 2.} \emph{Write $\M(F)$ as a Schur-function formula.}

\medskip
We now consider the weight assignment on the edges of $F$. As shown in Figure \ref{threeapp4}, the graph $F$ is divided by the dotted line into two parts. The top part is the semi-honeycomb $SH_1$, weighted as in Figure \ref{threeapp5a}. In particular, we view $SH_1$ as $n+a+1$ rows of baseless triangles. The triangles on the $i$-th row from the top have their left edges weighted by $eq^i$ and the right edges weighted by $f$ (these triangles are labeled by $i$ in the top picture). %It is exactly the weight assignment $\wt_{eq^{-n-1},f}(q)$ in Lemma \ref{semilem1}.

 The lower part is the (upside-down) semi-honeycomb $SH_2$, with edge weights as shown in Figure \ref{threeapp5b}. In particular, the $j$-th upside-down triangle (counted from the left) in each row has the left and right edges weighted by $hq^{c+j-1}$ and $gq^{c+j-1}$, respectively (these triangles are labeled by $c+j-1$). %It is the weight assignment $\overline{\wt}_{hq^{c-1},gq^{c-1}}(q)$ in Lemma \ref{semilem2}.

\begin{figure}[ht]\centering
\resizebox{14cm}{!}{\begin{picture}(0,0)%
\includegraphics{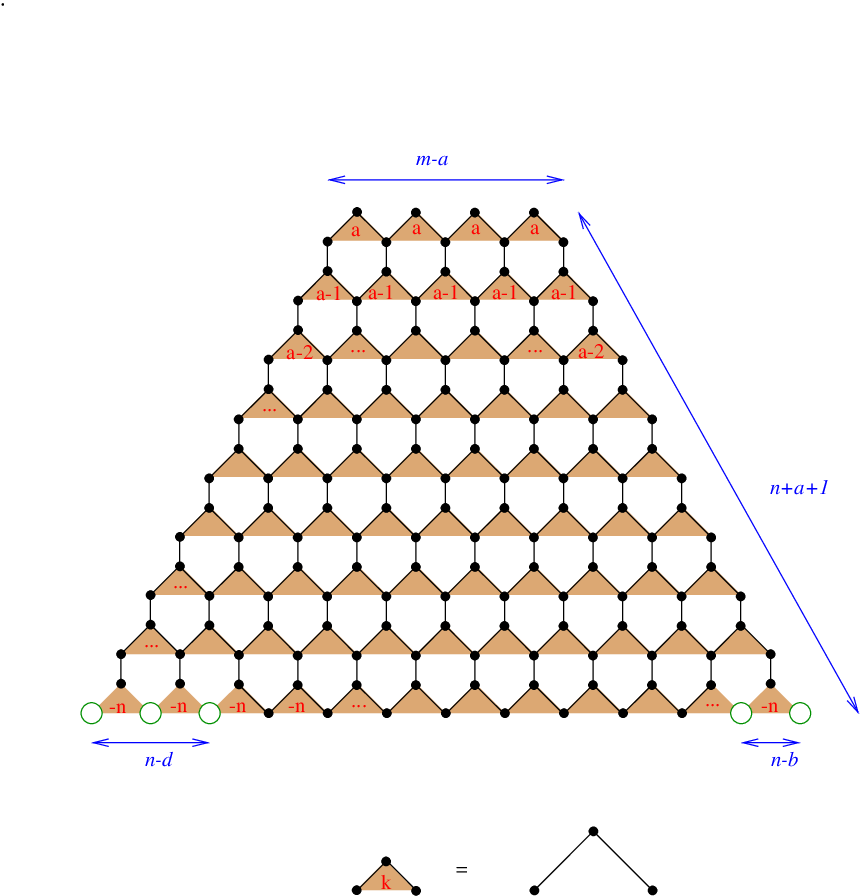}%
\end{picture}%
%
%  Created by WinFIG version 2024.2 
%  METADATA <version>1.0</version> 
%
\setlength{\unitlength}{3947sp}%
\begin{picture}(6882,7154)(40,-6354)
%  METADATA <id>4877</id> 
\put(4337,-6027){\makebox(0,0)[lb]{\smash{\fontsize{10}{12}\normalfont {\color[rgb]{0,0,1}$eq^k$}%
}}}
%  METADATA <id>4878</id> 
\put(5117,-6107){\makebox(0,0)[lb]{\smash{\fontsize{10}{12}\normalfont {\color[rgb]{0,0,1}$f$}%
}}}
\end{picture}}
		\caption{Weight assignment for the top part of $F$.}
		\label{threeapp5a}
\end{figure}

\begin{figure}[ht]\centering
\resizebox{13cm}{!}{\begin{picture}(0,0)%
\includegraphics{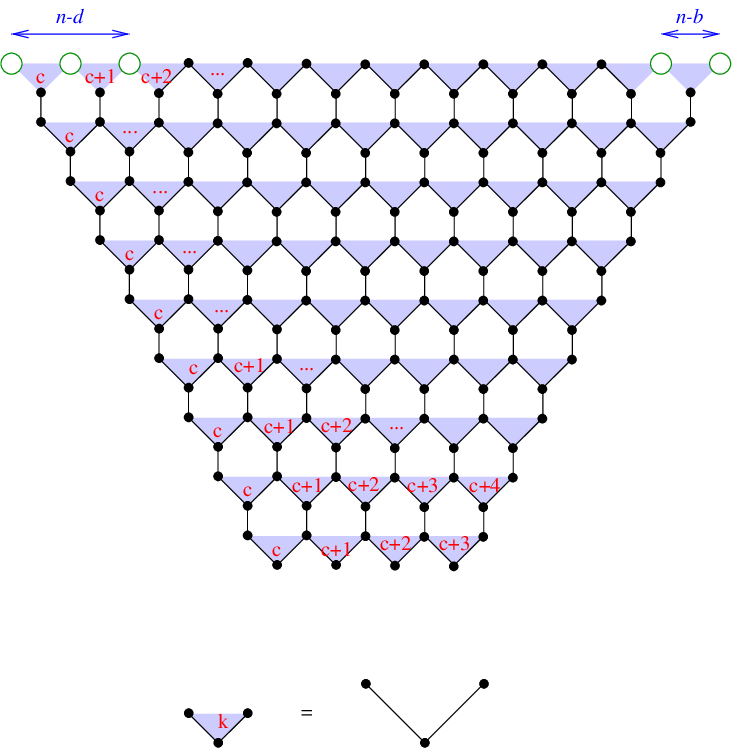}%
\end{picture}%
%
%  Created by WinFIG version 2024.2 
%  METADATA <version>1.0</version> 
%
\setlength{\unitlength}{3947sp}%
\begin{picture}(5853,5979)(2036,-6762)
%  METADATA <id>5570</id> 
\put(4928,-6674){\makebox(0,0)[lb]{\smash{\fontsize{10}{12}\normalfont {\color[rgb]{0,0,1}$hq^k$}%
}}}
%  METADATA <id>5572</id> 
\put(5755,-6674){\makebox(0,0)[lb]{\smash{\fontsize{10}{12}\normalfont {\color[rgb]{0,0,1}$gq^k$}%
}}}
\end{picture}%
}
		\caption{Weight assignment for the bottom part of $F$ (rotated by $180^{\circ}$).}
		\label{threeapp5b}
\end{figure}

Graph $F$ has $m+b+d-n+1$ vertices in the `middle' (see the ones on the dotted line in Figure  \ref{threeapp4}). In each perfect matching $\mu$ of the graph $F$,  there are exactly $m-a$ of these vertices matched upward, and $m-c$ remaining vertices are matched downward. (Note that we have $m+b+d-n+1=(m-a)+(m-c)$ by the balancing condition $m+ n=a+b+c+d+1$.) Thus, the matching $\mu$ of $F$ is partitioned into two disjoint perfect matchings: a perfect matching $\mu_1$ of $SH_1$ with bottom vertices in positions $A'=[n-d] \cup (n-d+A) \cup (m+b+1+ [n-b])$ removed ($n-d+A$ is the position set corresponding to the matched-downward middle vertices) and a perfect matching $\mu_2$ of (the upside-down version of) $SH_2$, with bottom vertices in positions $B'=[n-d] \cup (n-d+B) \cup (m+b+1+ [n-b])$ removed ($n-d+B$ is the position set corresponding to the matched-upward middle vertices). Thus,  \[\wt(\mu)=\wt(\mu_1)\cdot \wt(\mu_2).\]
 It is easy to see that $SH_1$ is exactly the dual graph of the weighted semi-hexagon $\mathcal{SH}_{n+a+1,m-a} (\wt_{eq^{-n-1},f}(q))$ (with dents at the positions in $A'$) in Lemma \ref{semilem1}. Similarly,  $SH_2$ is exactly (an upside-down version of) the dual graph of the weighted semi-hexagon  $\mathcal{SH}_{n+c+1,m-c} (\overline{\wt}_{hq^{c-1},gq^{c-1}}(q))$ (with dents at the positions in $B'$) in Lemma \ref{semilem2}. Applying Lemma  \ref{semilem1}  with the substitutions $a \mapsto n+a+1,b \mapsto m-a, x \mapsto eq^{-n-1}, y \mapsto f$ and  Lemma \ref{semilem2}  with the substitutions $a \mapsto n+c+1,b \mapsto m-c, x \mapsto hq^{c-1}, y \mapsto gq^{c-1}$, we obtain
 \begin{align}
 \wt(\mu)=&(eq^{-n-1})^{(n+a+1)(m-a)} \left(\frac{f}{eq^{-n-1}}\right)^{\square(n-d +B)}s_{\lambda(A')}(q,q^2,\dots,q^{n+a+1}) \notag\\
 &\times (hq^{c-1})^{(n+c+1)(m-c)}\left(\frac{g}{h}\right)^{\square(n-d +A)} q^{(n+c+1)(m-c)(m-n-2c-1)/2+(n+c+1)\square(n-d+A)}\notag\\
 &\times s_{\lambda(B')}(q,q^2,\dots,q^{n+c+1}).
 \end{align}
 %{\color{purple} replace $q^{(n+c+1)(m-c)^2+(n+c+1)\square(A)}$ with $q^{(n+c+1)(m-c)(m-n-2c-1)/2+(n+c+1)\square(n-d+A)}$ ???}
Here, for each set $U=\{u_1<u_2<\cdots<u_k\}$, we define the operation \[\square(U):=\sum_{i=1}^{k} (u_i-i).\]

Now, by Lemma \ref{Schurlem}, we have %({\color{red}simplify})
 \begin{align}
 \wt(\mu)%&=%(eq^{-n-1})^{(n+a+1)(m-a)} \left(\frac{f}{eq^{-n-1}}\right)^{\square(n-d +B)} q^{(n+c+1)(m-c)^2+(n+c+1)\square(n-d +A)}\notag\\
 %&\times  (hq^{c-1})^{(n+c+1)(m-c)}\left(\frac{g}{h}\right)^{\square(n-d +A)}\notag\\
 %& \times K \cdot s_{\lambda(A)}(q,q^2,\dots,q^{m-c})s_{\lambda(B)}(q,q^2,\dots,q^{m-a})\\
 %&=(e ^{(n+a+1)(m-a)}) \cdot (q^{-(n+1)(n+a+1)(m-a)})  (h^{(n+c+1)(m-c)})(q^{(c-1)(n+c+1)(m-c)}) q^{(n+c+1)(m-c)^2}\notag\\
 %&\times \left(\frac{f}{e}\right)^{(n-d)(m-a)} \left(\frac{f}{e}\right)^{\square(B)} \left(\frac{g}{h}\right)^{(n-d)(m-c)} \left(\frac{g}{h}\right)^{\square(A)} \cdot q^{(n+1)(n-d)(m-a)}q^{(n+1)\square(B)+ (n+c+1)\square(A)}\notag \\
 %& \times q^{R(n-d,n-b,m-c,m-a)} \cdot \frac{H(m-c)H(m-a)H(n-b)^2H(2n-b-d)H(2n-b-d+2m-a-c)^2 }{H(2n-b-d+m-c)H(2n-b-d+m-a)H(n-b+2m-a-c)H(n-d+2m-a-c)^2}\\
 %&\times s_{\lambda(A)}(q,q^2,\dots,q^{n+a+1})s_{\lambda(B)}(q,q^2,\dots,q^{n+c+1})\\
 %%&=e ^{(n+a+1)(m-a)} h^{(n+c+1)(m-c)}\left(\frac{f}{e}\right)^{(n-d)(m-a)+\square(B)}\left(\frac{g}{h}\right)^{(n-d)(m-c)+\square(A)}\notag\\
 %%&\times q^{\frac12(n+c+1)(m-c)(m-n-3)-(n+1)(n+a+1)(m-a)}\notag\\
 %%&\times q^{(n+1)((n-d)(m-a)+\square(B))}q^{(n+c+1)((n-d)(m-c)+\square(A))}\notag\\
 %%& \times q^{R(n-d,n-b,m-c,m-a)} \cdot \prod_{i=1}^{2m-a-c} (q^i;q)_{n-d}(q^{2m-a-c-i+1};q)_{n-b}\cdot  \prod_{j=1}^{n-b} (q^{2m-a-c+j};q)_{n-d}^2\notag\\
 %%&\times \frac{\Hf_q(m-c)\Hf_q(m-a)\Hf_q(n-d)^2\Hf_q(n-b)^2}{\Hf_q(m+2n-b-c-d)\Hf_q(m+2n-b-d-a)}\times s_{\lambda(A)}(q,q^2,\dots,q^{m-c})s_{\lambda(B)}(q,q^2,\dots,q^{m-a})\\
 &=e ^{(d+a+1)(m-a)} h^{(d+c+1)(m-c)}g^{(n-d)(m-c)}f^{(n-d)(m-a)}\left(\frac{f}{e}\right)^{\square(B)}\left(\frac{g}{h}\right)^{\square(A)}\notag\\
 &\times q^{\frac12(n+c+1)(m-c)(m-n-3)-(n+1)(n+a+1)(m-a)}q^{(n+1)(n-d)(m-a)}q^{(n+c+1)(n-d)(m-c)}\notag\\
 &\times q^{(n+1)\square(B)}q^{(n+c+1)\square(A)}\notag\\
 & \times q^{(m-a)(m-c)(n-b)+(2m-a-c)\binom{n-b+1}{2}} \cdot \prod_{i=1}^{2m-a-c} (q^i;q)_{n-d}(q^{2m-a-c-i+1};q)_{n-b}\cdot  \prod_{j=1}^{n-b} (q^{2m-a-c+j};q)_{n-d}^2\notag\\
 &\times \frac{\Hf_q(m-c)\Hf_q(m-a)\Hf_q(n-d)^2\Hf_q(n-b)^2}{\Hf_q(n+a+1)\Hf_q(n+c+1)}\times s_{\lambda(A)}(q,q^2,\dots,q^{m-c})s_{\lambda(B)}(q,q^2,\dots,q^{m-a}),
 \end{align}
%where the exponent $R(n-d,n-b,m-c,m-a)$ is defined as in Lemma \ref{Schurlem}.
We also note that $m+n=a+b+c+d+1$ by the balancing condition.
 
We note that $\square(n+S)=n|S|+\square(S)$. %no, it is n|S|, not $n\binom\binom{|S|}{2}}$
We also note that since $A\sqcup B=[2m-a-c]$, we have 
\[\square(A)+\square(B)=\binom{2m-a-c+1}{2}-\binom{m-a+1}{2}-\binom{m-c+1}{2}=(m-a)(m-c).\] 
Here we write $A\sqcup B=S$ when $A$ and $B$ are two disjoint sets whose union is $S$.
From the condition
\[\frac{g}{h}= q^{-c} \cdot \frac{f}{e},\] 
we get %({\color{red}simplify})
\begin{align}
 \wt(\mu)
 %=&e ^{(d+a+1)(m-a)} \cdot f^{(n-d)(m-a)} \cdot g^{(n-d)(m-c)} \cdot  h^{(d+c+1)(m-c)}\cdot q^{-(n+1)(d+a+1)(m-a)+(m-1)(n+c+1)(m-c)} \notag\\
%&\times  \left(\frac{f}{e}\right)^{(m-a)(m-c)}  \cdot q^{(n+1)(m-a)(m-c))}\notag \\
 %& \times q^{R(n-d,n-b,m-c,m-a)} \cdot \frac{H(m-c)H(m-a)H(n-b)^2H(2n-b-d)H(m+n+1)^2 }{H(n+a+1)H(n+c+1)H(m+d+1)H(m+b+1)^2}\\
 %&\times s_{\lambda(A)}(q,q^2,\dots,q^{n+a+1})s_{\lambda(B)}(q,q^2,\dots,q^{n+c+1})\\
 %=&e ^{(n-b)(m-a)} \cdot f^{(m+n-c-d)(m-a)} \cdot g^{(n-d)(m-c)} \cdot  h^{(d+c+1)(m-c)}\cdot {\color{purple}q^{(m-1)(n+c+1)(m-c)-(n+1)(m-a)(n-b)}} \notag\\
 %%&=e^{(n-b)(m-a)}f^{(m+n-c-d)(m-a)}g^{(n-d)(m-c)}h^{(d+c+1)(m-c)}\notag\\
 %%&\times  q^{\frac12(n+c+1)(m-c)(m-n-3)-(n+1)(n+a+1)(m-a)}q^{(n+1)(n-d)(m-a)}q^{(n+c+1)(n-d)(m-c)}\notag\\
 %%&\times {q^{(n+1)(m-a)(m-c)}}\notag\\
 %%& \times q^{R(n-d,n-b,m-c,m-a)} \cdot \prod_{i=1}^{2m-a-c} (q^i;q)_{n-d}(q^{2m-a-c-i+1};q)_{n-b}\cdot  \prod_{j=1}^{n-b} (q^{2m-a-c+j};q)_{n-d}^2\notag\\
 %%&\times \frac{\Hf_q(m-c)\Hf_q(m-a)\Hf_q(n-d)^2\Hf_q(n-b)^2}{\Hf_q(n+a+1)\Hf_q(n+c+1)}s_{\lambda(A)}(q,q^2,\dots,q^{m-c})s_{\lambda(B)}(q,q^2,\dots,q^{m-a})\\
 &=e^{(n-b)(m-a)}f^{(m+n-c-d)(m-a)}g^{(n-d)(m-c)}h^{(d+c+1)(m-c)}\notag\\
 &\times q^{\frac12(n+c+1)(m-c)(n+m-2d-3)-(n+1)(a+d+1)(m-a)+(n+1)(m-a)(m-c)}\notag\\
 & \times q^{(m-a)(m-c)(n-b)+(2m-a-c)\binom{n-b+1}{2}} \notag\\
 &\times \prod_{i=1}^{2m-a-c} (q^i;q)_{n-d}(q^{2m-a-c-i+1};q)_{n-b}\cdot  \prod_{j=1}^{n-b} (q^{2m-a-c+j};q)_{n-d}^2\notag\\
 &\times \frac{\Hf_q(m-c)\Hf_q(m-a)\Hf_q(n-d)^2\Hf_q(n-b)^2}{\Hf_q(n+a+1)\Hf_q(n+c+1)}\notag\\
 &\times s_{\lambda(A)}(q,q^2,\dots,q^{m-c})s_{\lambda(B)}(q,q^2,\dots,q^{m-a}).
 \end{align}
 
Therefore, summing over all perfect matchings of $F$, we obtain
 \begin{align}\label{maineqf}
\M(F)&=e^{(n-b)(m-a)}f^{(m+n-c-d)(m-a)}g^{(n-d)(m-c)}h^{(d+c+1)(m-c)}\notag\\
 &\times  q^{\frac12(n+c+1)(m-c)(n+m-2d-3)-(n+1)(a+d+1)(m-a)+(n+1)(m-a)(m-c)}\notag\\
%&e ^{(n-b)(m-a)} \cdot f^{(m+n-c-d)(m-a)} \cdot g^{(n-d)(m-c)} \cdot  h^{(d+c+1)(m-c)}\cdot {\color{purple}q^{(m-1)(n+c+1)(m-c)-(n+1)(m-a)(n-b)}} \notag\\
 & \times q^{(m-a)(m-c)(n-b)+(2m-a-c)\binom{n-b+1}{2}} \notag\\
 &\times \prod_{i=1}^{2m-a-c} (q^i;q)_{n-d}(q^{2m-a-c-i+1};q)_{n-b}\cdot  \prod_{j=1}^{n-b} (q^{2m-a-c+j};q)_{n-d}^2 \notag\\
&\times \frac{\Hf_q(m-c)\Hf_q(m-a)\Hf_q(n-d)^2\Hf_q(n-b)^2}{\Hf_q(n+a+1)\Hf_q(n+c+1)}\\
&\times\sum_{(A,B)}s_{\lambda(A)}(q,q^2,\dots,q^{m-c})s_{\lambda(B)}(q,q^2,\dots,q^{m-a}),
 \end{align}
 where the sum ranges over all pairs of disjoint sets $(A, B)$ whose union is $[2m-a-c]$, with $|A|=m-c$ and $|B|=m-a$.

\medskip
\textbf{ Step 3.}  \emph{Apply Lemma \ref{Schurlem2}.}
\medskip

Without loss of generality, assume that $c\geq a$. By Lemma \ref{Schurlem2} with the substitutions $m\mapsto m-c$ and $d\mapsto c-a$, we may write the sum on the right-hand side of (\ref{maineqf}) as follows.

If $c-a$ is even, say $c-a=2s$, then
\begin{align}\label{maineqd}
&\sum_{(A,B)}s_{\lambda(A)}(q,q^2,\dots,q^{m-c})s_{\lambda(B)}(q,q^2,\dots,q^{m-a})\notag\\
&=2^{m-c} (1-q)^{s} q^{\frac{(m-c)(m-c+1)(2m-2c+6s+1)}{6}-\frac{s(s-1)(s-2)}{3}}\frac{\Hf_{q^2}(m-c+s)^2}{ \Hf_q(m-c)\Hf_q(m-c+2s)}\notag\\
& \quad \times \prod_{i=1}^{s} \frac{(q^{3};q^2)_{i-1}(q;q^2)_{i-1}(q^{2(i+s)};q^2)_{m-c}(q^2;q^2)_{i-1}}{(q^2;q^2)_{m-c+s-i}}.
\end{align}
where the sum is over all disjoint pairs of sets $A$ and $B$ whose union is $[2m-a-c]$ and whose cardinalities are given by $|A|=m-c$ and $|B|=m-a.$	

If $c-a$ is odd, say $c-a=2s+1$, then
\begin{align}\label{maineqe}
&\sum_{(A,B)}s_{\lambda(A)}(q,q^2,\dots,q^{m-c})s_{\lambda(B)}(q,q^2,\dots,q^{m-a})\notag\\
&=2^{m-c}(1-q)^{2s}q^{\frac{(m-c)(m-c+1)(m-c+3s+2)}{3}-\frac{s(s-1)(s-2)}{3}}\frac{\Hf_{q^2}(m-c+s)H_{q^2}(m-c+s+1)}{ \Hf_q(m-c)\Hf_q(m-c+2s+1)}\notag\\
& \times \prod_{i=1}^{s} \frac{(q^{3};q^2)_{i-1}(q^3;q^2)_{i-1}(q^{2(i+s+1)};q^2)_{m-c}(q^2;q^2)_{i-1}}{(q^2;q^2)_{m-c+s-i}}.
\end{align}

Combining the identities (\ref{maineqc}), (\ref{maineqf}), (\ref{maineqd}), and (\ref{maineqe}), we have, when $|a-c|=2s$, %({\color{red}simplify})
\begin{align}
 \M(C)%%&=\prod_{i=1}^{c} (egq^{i-1}+hf)^{c-i+1}   \prod_{i=1}^{n} (eg+hfq^{i})^{n-i+1}  \prod_{i=1}^{a} (e g  q^{i-1}+hf)^{a-i+1}\notag\\
%%&\times q^{\binom{c}{2}+\binom{c+1}{3}+\binom{a}{2}+\binom{a+1}{3}+(m+b+d)\binom{n+1}{2}+(n+c+1)\binom{a+1}{2}+(c-d)\binom{n+1}{2}} \cdot \M(F)\\
%%&=\prod_{i=1}^{c} (egq^{i-1}+hf)^{c-i+1}   \prod_{i=1}^{n} (eg+hfq^{i})^{n-i+1}  \prod_{i=1}^{a} (e g  q^{i-1}+hf)^{a-i+1}\notag\\
%%&\times q^{\binom{c}{2}+\binom{c+1}{3}+\binom{a}{2}+\binom{a+1}{3}+(m+b+d)\binom{n+1}{2}+(n+c+1)\binom{a+1}{2}+(c-d)\binom{n+1}{2}} \notag\\
%%&\times e^{(n-b)(m-a)}f^{(m+n-c-d)(m-a)}g^{(n-d)(m-c)}h^{(d+c+1)(m-c)}\notag\\
 %%&\times  q^{\frac12(n+c+1)(m-c)(n+m-2d-3)-(n+1)(a+d+1)(m-a)+(n+1)(m-a)(m-c)}\notag\\
 %%& \times q^{(m-a)(m-c)(n-b)+(2m-a-c)\binom{n-b+1}{2}}\frac{\Hf_q(m-c)\Hf_q(m-a)\Hf_q(n-d)^2\Hf_q(n-b)^2}{\Hf_q(n+a+1)\Hf_q(n+c+1)}\notag\\
 %%&\times \prod_{i=1}^{2m-a-c} (q^i;q)_{n-d}(q^{2m-a-c-i+1};q)_{n-b}\cdot  \prod_{j=1}^{n-b} (q^{2m-a-c+j};q)_{n-d}^2 \notag\\
%%&\times \sum_{(A,B)}s_{\lambda(A)}(q,q^2,\dots,q^{m-c})s_{\lambda(B)}(q,q^2,\dots,q^{m-a})\\
&=\prod_{i=1}^{c} (egq^{i-1}+hf)^{c-i+1}   \prod_{i=1}^{n} (eg+hfq^{i})^{n-i+1}  \prod_{i=1}^{a} (e g  q^{i-1}+hf)^{a-i+1}\notag\\
&\times q^{\binom{c}{2}+\binom{c+1}{3}+\binom{a}{2}+\binom{a+1}{3}+(m+b+d)\binom{n+1}{2}+(n+c+1)\binom{a+1}{2}+(c-d)\binom{n+1}{2}} \notag\\
&\times e^{(n-b)(m-a)}f^{(m+n-c-d)(m-a)}g^{(n-d)(m-c)}h^{(d+c+1)(m-c)}\notag\\
 &\times  q^{\frac12(n+c+1)(m-c)(n+m-2d-3)-(n+1)(a+d+1)(m-a)+(n+1)(m-a)(m-c)}\notag\\
 & \times q^{(m-a)(m-c)(n-b)+(2m-a-c)\binom{n-b+1}{2}}\frac{\Hf_q(m-c)\Hf_q(m-a)\Hf_q(n-d)^2\Hf_q(n-b)^2}{\Hf_q(n+a+1)\Hf_q(n+c+1)}\notag\\
 &\times \prod_{i=1}^{2m-a-c} (q^i;q)_{n-d}(q^{2m-a-c-i+1};q)_{n-b}\cdot  \prod_{j=1}^{n-b} (q^{2m-a-c+j};q)_{n-d}^2\notag\\
 &\times 2^{m-c} (1-q)^{s} q^{\frac{(m-c)(m-c+1)(2m-2c+6s+1)}{6}-\frac{s(s-1)(s-2)}{3}}\notag\\
&\times\frac{\Hf_{q^2}(m-c+s)^2}{ \Hf_q(m-c)\Hf_q(m-c+2s)}\notag\\
&\times \prod_{i=1}^{s} \frac{(q^{3};q^2)_{i-1}(q;q^2)_{i-1}(q^{2(i+s)};q^2)_{m-c}(q^2;q^2)_{i-1}}{(q^2;q^2)_{m-c+s-i}}.
\end{align}
When $|c-a|=2s+1$, we have
\begin{align}
 \M(C)&=\prod_{i=1}^{c} (egq^{i-1}+hf)^{c-i+1}   \prod_{i=1}^{n} (eg+hfq^{i})^{n-i+1}  \prod_{i=1}^{a} (e g  q^{i-1}+hf)^{a-i+1}\notag\\
&\times q^{\binom{c}{2}+\binom{c+1}{3}+\binom{a}{2}+\binom{a+1}{3}+(m+b+d)\binom{n+1}{2}+(n+c+1)\binom{a+1}{2}+(c-d)\binom{n+1}{2}} \notag\\
&\times e^{(n-b)(m-a)}f^{(m+n-c-d)(m-a)}g^{(n-d)(m-c)}h^{(d+c+1)(m-c)}\notag\\
 &\times  q^{\frac12(n+c+1)(m-c)(n+m-2d-3)-(n+1)(a+d+1)(m-a)+(n+1)(m-a)(m-c)}\notag\\
 & \times q^{(m-a)(m-c)(n-b)+(2m-a-c)\binom{n-b+1}{2}}\frac{\Hf_q(m-c)\Hf_q(m-a)\Hf_q(n-d)^2\Hf_q(n-b)^2}{\Hf_q(n+a+1)\Hf_q(n+c+1)}\notag\\
 & \times \prod_{i=1}^{2m-a-c} (q^i;q)_{n-d}(q^{2m-a-c-i+1};q)_{n-b}\cdot  \prod_{j=1}^{n-b} (q^{2m-a-c+j};q)_{n-d}^2\notag\\
 &\times 2^{m-c}(1-q)^{2s}q^{\frac{(m-c)(m-c+1)({m-c+3s+2})}{3}-\frac{s(s-1)(s-2)}{3}}\notag\\
&\times\frac{\Hf_{q^2}(m-c+s)H_{q^2}(m-c+s+1)}{ \Hf_q(m-c)\Hf_q(m-c+2s+1)}\notag\\
& \times \prod_{i=1}^{s} \frac{(q^{3};q^2)_{i-1}(q^3;q^2)_{i-1}(q^{2(i+s+1)};q^2)_{m-c}(q^2;q^2)_{i-1}}{(q^2;q^2)_{m-c+s-i}}.
\end{align}
These are exactly the formulas claimed in Theorem \ref{main}.
\end{proof}

%%%%%%%%
\section{Proofs of Several Results in Section \ref{Sec:Prelim}}\label{Sec:Complete}

\subsection{Proofs of the Sandwich Lemmas (Lemmas \ref{lm2} and \ref{lem3})}

\begin{figure}\centering
		\includegraphics[width=16 cm]{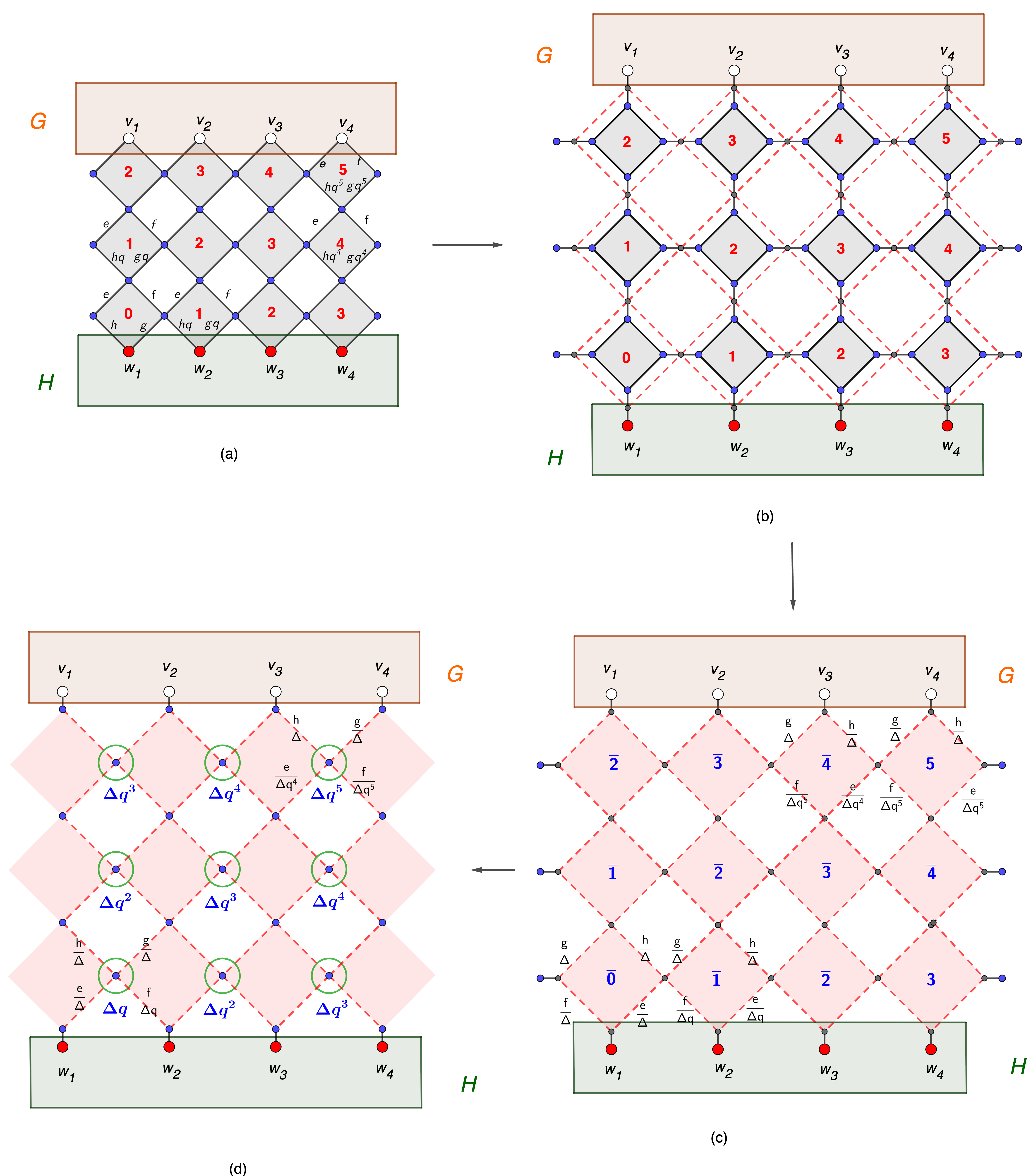}
		\caption{Illustration of the proof of Lemma \ref{lm2}.}
		\label{prooflm2}
\end{figure}

\begin{proof}[Proof of Sandwich Lemma 1 (Lemma \ref{lm2})]
The proof is illustrated in Figure \ref{prooflm2} for the case $m=3$ and $n=4$.  We start with graph $A=G\# AR_{m,n}(\wt_{h,g}^{e,f}(q))\#H$ as in picture (a).
Our Aztec rectangle graph $AR_{m,n}(\wt_{h,g}^{e,f}(q))$ consists of $m=3$ rows, each contains $n=4$ diamonds. We will perform four transformations as follows. In picture (a), the diamonds of label $i$ (bold red number in the middle) have edge weights $e,f,gq^{i},hq^{i}$ in clockwise order, starting from the northwest side. We apply the Vertex-Splitting Lemma (Lemma \ref{VSlem}) at every vertex of the Aztec rectangle graph. We split horizontally at the left and right vertices and vertically at the top and bottom vertices of each diamond, obtaining the graph $B$ in picture (b). This allows us to apply the Spider Lemma  (Lemma \ref{spiderlem}) around each diamond face; each diamond (together with four adjacent edges) will be replaced by the dotted diamond. We get graph $C$ in picture (c); each dotted diamond labeled $\overline{i}$ has edge weights $g/\Delta,h/\Delta,e/\Delta q^i, f/\Delta q^i$ in clockwise order, starting from the northwest side, where $\Delta:=eg+fh$. Next, we note that each horizontal edge in graph $C$ is a forced edge, so we can remove it (together with its vertices) without changing the matching generating function of the graph. Denote by $D$ the resulting graph in picture (d); the leftmost and rightmost diamonds are partial diamonds with their left and edges missing, respectively. Finally, in graph $D$, we apply the Star Lemma (Lemma \ref{starlem}) to each circled vertex with the factor indicated. We obtain exactly the graph $E=G\# _|^|AR_{m,n-1}(\wt_{h,g}^{eq,f}(q))\#H$. 

Next, let us keep track of the multiplicative factor in each transformation above. In the first transformation, we applied the Vertex-Splitting Lemma, and it does not change the matching generating function. We have
\[\M(A)=\M(B).\]
Next, in the second transformation, the application of the Spider Lemma to each diamond of label $k$ gives a factor $(e\cdot gq^k+f\cdot hq^k)=\Delta q^k$. We note that the diamond in the $i$-th row (counted from the bottom) and $j$-th column (counted from the left) is labeled by $i+j-2$. Gathering all such factors, we have
\[\M(B)=\Delta^{mn}q^{\sum_{i=1}^{m}\sum_{j=1}^{n} (i+j-2)} \cdot \M(C).\]
In the third transformation, we removed forced edges of weight $1$ from $C$, and it did not change the matching generating function. We have
\[\M(C)=\M(D).\]
Finally, in the fourth transformation, we applied the Star Lemma at the $m(n-1)$ circled vertices. The vertex in the $i$-th row and $j-$th column contributes a factor $\Delta q^{i+j-1}$. Thus, we have
\[\M(D)=\Delta ^{-m(n-1)} q^{-\sum_{i=1}^{m}\sum_{j=1}^{n-1} (i+j-1)}\M(E).\]
Combining all four transformations, we obtain
\[\M(A)=\Delta^m q^{\binom{m}{2}}\M(E)\] as desired.
\end{proof}

Sandwich Lemma 2 (Lemma \ref{lem3}) can be proved similarly to Sandwich Lemma 1 above, as illustrated in Figure \ref{prooflm3}.

\begin{figure}\centering
		\includegraphics[width=16 cm]{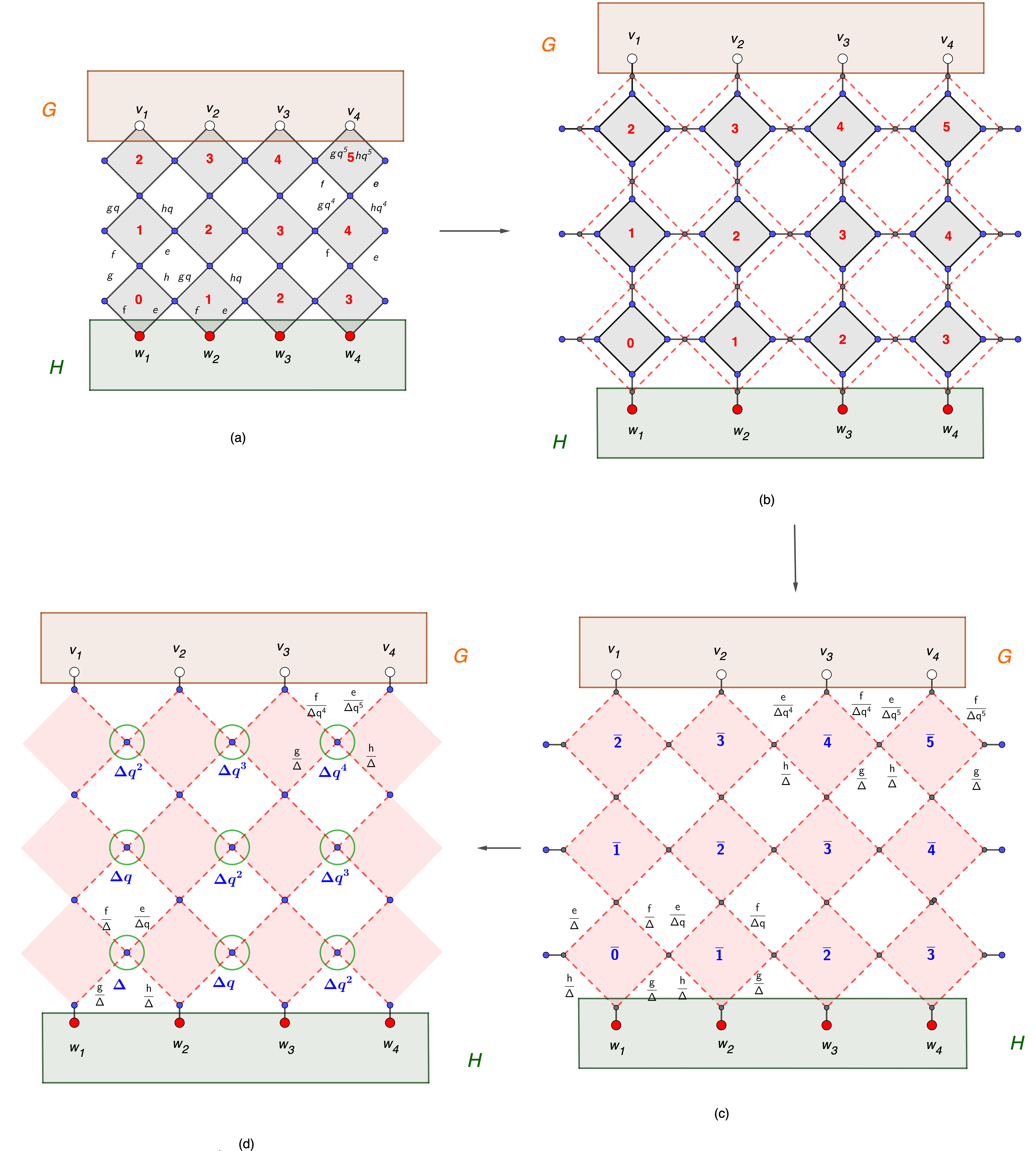}
		\caption{Illustration of the proof of Lemma \ref{lem3}.}
		\label{prooflm3} 		
\end{figure}

\begin{proof}[Proof of Sandwich Lemma 2 (Lemma \ref{lem3})]
The proof is illustrated in Figure \ref{prooflm3} for the case $m=3$ and $n=4$.  We start with graph $A=G\# AR_{m,n}(\overline{wt}_{h,g}^{e,f}(q))\#H$ in picture (a).
We will perform $4$-step transformations as follows. In picture (a), the diamond of label $i$ (bold red number in the middle) has edge weights $gq^{i},hq^{i},e,f$ in clockwise order starting from the northwest side. We apply the Vertex-Splitting Lemma to obtain the graph $B$ in picture (b). We have the relation
\[\M(A)=\M(B).\]
Next, we apply the Spider Lemma around each diamond face of $B$ and get graph $C$ in picture (c); each dotted diamond of label $\overline{i}$ has edge weights $e/\Delta q^i, f/\Delta q^i,g/\Delta,h/\Delta,$ in clockwise order starting from the northwest side, where $\Delta:=eg+fh$. We therefore have
\[\M(B)=\Delta^{mn}q^{\sum_{i=1}^{m}\sum_{j=1}^{n} (i+j-2)} \cdot \M(C).\]
Next, we note that each horizontal edge in graph $C$ is a forced edge of weight $1$. We can remove it (without changing the matching generating function) to get the graph $D$ as in picture (d) and
\[\M(C)=\M(D).\]
Finally, in graph $D$, we apply the Star Lemma to each circled vertex with the indicated factor. We obtain exactly the graph $E=G\# _|^|AR_{m,n-1}(\overline{wt}_{h,g}^{e/q,f}(q))\#H$. We note that the factors in the Star Lemma here are slightly different from those in the proof of Lemma \ref{lm2} above. We have now
\[\M(D)=\Delta ^{-m(n-1)} q^{-\sum_{i=1}^{m}\sum_{j=1}^{n-1} (i+j-2)}\M(E).\]
Combining all four transformations, we get
\[\M(A)=\Delta^m q^{m(n-1)+\binom{m}{2}}\M(E)\] as desired.
\end{proof}

\subsection{Proof of Lemma \ref{semilem2}}

\begin{proof}[Proof of Lemma \ref{semilem2}]
Consider the semi-hexagon $\mathcal{SH}=\mathcal{SH}_{a,b}(s_1,s_2,\dots,s_a)$. Similarly to the proof of Lemma \ref{semilem1}, we extract factors $x$ and $y$ from the weights of the right-tilting and left-tilting lozenges in $\mathcal{SH}$. This yields the simpler weight assignment $\overline{\wt}_{1,1}(q)$. Denote the resulting weighted semi-hexagon by $\mathcal{SH}'$. Then we have
\begin{equation}\label{semieq1}
\T(\mathcal{SH}_{a,b}(\overline{\wt}_{x,y}(q)))=x^{ab} \cdot (y/x)^{\sum_{i=1}^{b}(r_i-i)}\cdot\T(\mathcal{SH}').
\end{equation}
Similarly to the argument in the Star Lemma (Lemma \ref{starlem}), if we multiply the weights of all lozenges containing a fixed unit triangle by a factor $t>0$, then the tiling generating function is multiplied by the same factor $t$. (Since each tiling contains exactly one lozenge covering this triangle.) We multiply the weights of all lozenges covering the down-pointing triangles by the factors as shown in Figure \ref{newqweight}(a). In particular, the factor corresponding to a triangle at distance $k\cdot \frac{\sqrt{3}}{2}$ from the northwest boundary is $q^{-k}$. This way, all left- and right-tilting lozenges have weight $1$, and a vertical lozenge at distance $k\cdot \frac{\sqrt{3}}{2}$ to the northwest boundary now has weight $q^{-k}$ (see Figure \ref{newqweight}(b)).  Denote the resulting weighted semi-hexagon by $ \mathcal{SH}''$  and the new weight assignment by $\wt ''$. Then, we have
\begin{equation}\label{semieq2}
\T(\mathcal{SH}')=q^{\sum_{i=1}^a\binom{a+b-i+1}{2}}\cdot\T(\mathcal{SH}'').
\end{equation}

Next, for any fixed lozenge tiling $L$ of the region,  we will show that the new weight $\wt''(L)$ and the weight $\wt_{1,1}(q^{-1})(L)$ from Lemma \ref{semilem1} (and in Figure \ref{semihex}) differ only by a multiplicative factor independent from $L$. A vertical lozenge of weight $q^{-k}$ is labeled by $-k$ in blue, and the right-tilting lozenge with weight $q^{i}$ is labeled by $i$ in red, as shown in Figure \ref{newqweight}(c). (The left-tilting lozenges all have weight $1$ and are not labeled).
We also record these numbers in two plane partitions: the left one (in red) is the column-strict plane partition $\mu_L$ corresponding to the weight assignment $\wt_{1,1}(q)$ (as shown previously in Figure \ref{semihex}), the right one (in blue) is the staircase-shaped $(a-1,a-2,\dots,1,0)$ plane partition $\nu_L$, recording the negations of the numbers in the vertical lozenges. We now modify these numbers as follows.   We extract a factor $q$ from the weight of each right-tilting lozenge, and extract weight $q^{-k}$ from the $k$-th vertical lozenges (counted from the left) along path $Q_{a-i+1}$. See Figure \ref{newqweight}(d). This is equivalent to subtracting $1$ from each entry of the red plane partition $\mu_L$ in Figure \ref{newqweight}(c) and to subtracting $k$ from the $k$-th rightmost entry in each row of the blue partition $\nu_L$ in Figure \ref{newqweight}(c). Denote by $\overline{\mu}_L$ and $\overline{\nu}_L$ the resulting plane partitions, respectively. The entries in the $i$-th row of $\overline{\mu}_L$ form an integer partition $\pi_i$, and the entries in the $i$-th row of $\overline{\nu}_L$ form the conjugate partition of $\pi_i$. In particular, $|\overline{\mu}_L|=|\overline{\nu}_L|$. This implies that
\[|\nu_L|=|\overline{\nu}_L|+\sum_{i=1}^a\binom{i}{2}=|\overline{\mu}_L|+\sum_{i=1}^a\binom{i}{2}=  |\mu_L| -\sum_{i=1}^a(s_i-i)+\sum_{i=1}^a\binom{i}{2}. \]
We now have
 \begin{align*} \wt''(L)=q^{-|\nu_L|}=&q^{\sum_{i=1}^a(s_i-i)-\sum_{i=1}^a\binom{i}{2}} \cdot q^{-|\mu_L|}.
 \end{align*}
 Summing over all tilings of $\mathcal{SH}''$, we get
\begin{equation}\label{semieq3}
\T(\mathcal{SH}'')=q^{\sum_{i=1}^a(s_i-i)-\sum_{i=1}^a\binom{i}{2}}\cdot s_{\lambda(s_1,s_2,\dots,s_a)}(q^{-1},\dots,q^{-a}).
\end{equation}
Note that, since $\{r_1,\dots,r_b\}=[a+b]\setminus \{s_1,\dots,s_a\}$, we have %({\color{red}simplify})
\begin{align*}
\sum_{i=1}^a(s_i-i)+\sum_{i=1}^{b}(r_i-i)&=\sum_{j\in[a+b]}j-\binom{a+1}{2}-\binom{b+1}{2}\\
%%&=\binom{a+b+1}{2}-\binom{a+1}{2}-\binom{b+1}{2}
&=ab.
\end{align*}
Then, by (\ref{semieq1}), (\ref{semieq2}), and (\ref{semieq3}),  we have
\begin{align}\label{semieq4}
\T(\mathcal{SH}_{a,b}(\overline{\wt}_{x,y}(q)))=x^{ab} \cdot (y/x)^{\sum_{i=1}^{b}(r_i-i)}&\cdot q^{ab(a+b+2)/2-\sum_{i=1}^{b}(r_i-i)}\notag\\
&\times s_{\lambda(s_1,s_2,\dots,s_a)}(q^{-1},q^{-2},\dots,q^{-a}).
\end{align}

Next, from (\ref{Gp}) we obtain %({\color{red}simplify})
\begin{align}\label{semieq5}
s_{\lambda(s_1,s_2,\dots,s_a)}(q^{-1},\dots,q^{-a})%&=q^{-\sum_{i=1}^{a}(s_i-i)}\prod_{1\le i<j \le a} \frac{q^{-s_j}-q^{-s_i}}{q^{-j}-q^{-i}}\notag\\
%%&=q^{-\sum_{i=1}^{a}(s_i-i)}\prod_{1\le i<j \le a} q^{-s_i-s_j+i+j}  \frac{q^{s_j}-q^{s_i}}{q^{j}-q^{i}}\notag\\
%&=q^{-\sum_{i=1}^{a}(s_i-i)} \cdot q^{-\sum_{1\le i < j \le a} (s_i-i+s_j-j)}  \prod_{1\le i<j \le a}  \frac{q^{s_j}-q^{s_i}}{q^{j}-q^{i}}\notag\\
%&=q^{-(a+1)\sum_{i=1}^{a}(s_i-i)}\cdot q^{\sum_{i=1}^{a}(s_i-i)}\prod_{1\leq i<j\leq a}\frac{q^{s_j}-q^{s_i}}{q^{j}-q^{i}}\notag\\
%&=q^{-(a+1)\sum_{i=1}^{a}(s_i-i)} \cdot s_{\lambda(s_1,s_2,\dots,s_a)}(q,q^2,\dots,q^{a})\notag\\
%&=
=q^{-(a+1)\left(ab-\sum_{i=1}^{b}(r_i-i)\right)} \cdot s_{\lambda(s_1,s_2,\dots,s_a)}(q,q^2,\dots,q^{a}).
\end{align}
The lemma now follows from  (\ref{semieq4}) and (\ref{semieq5}).
\end{proof}

\begin{figure}[ht]\centering
\includegraphics[width=13 cm]{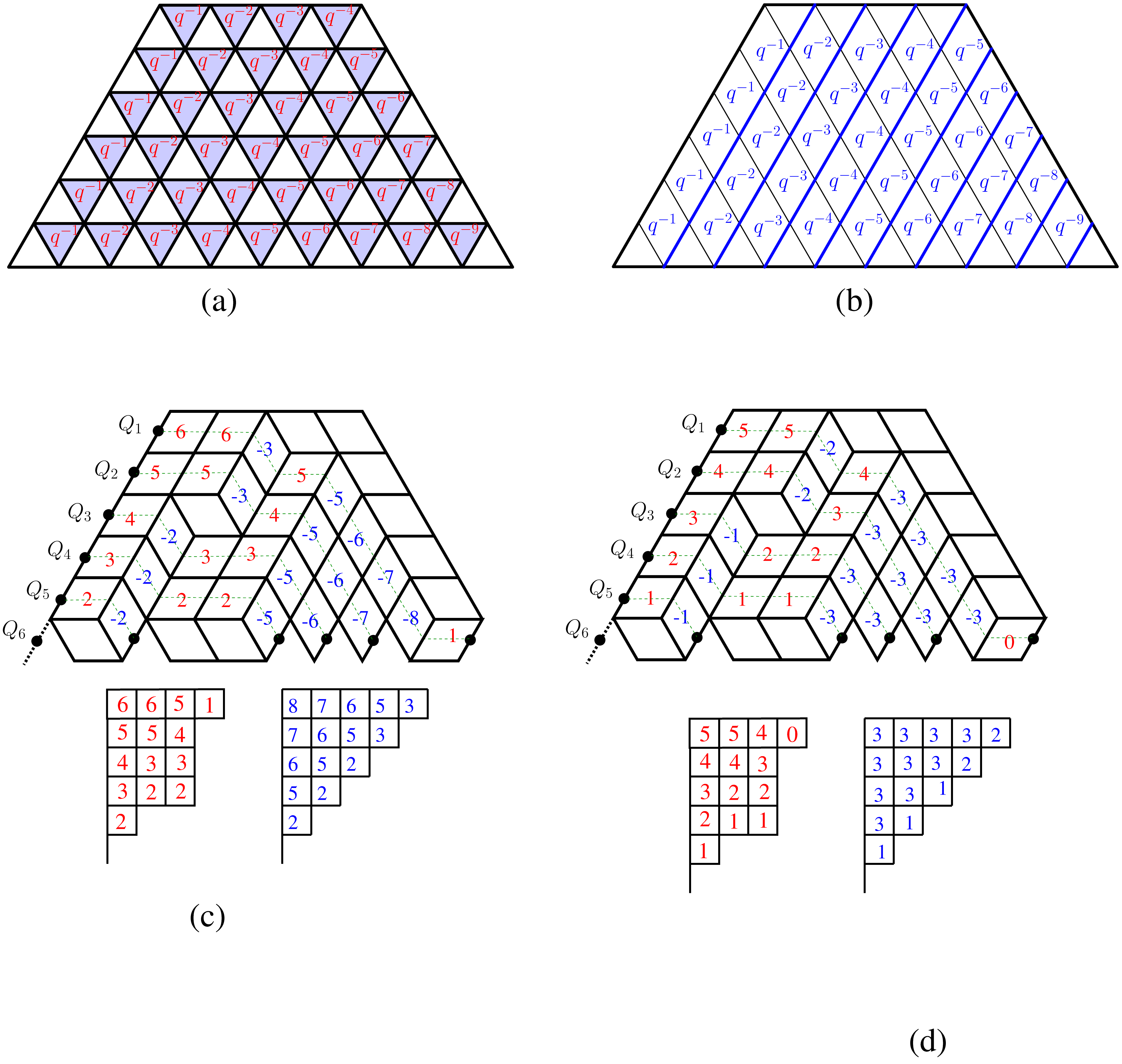}	
	\caption{(a) Modifying the lozenge weights of region $\mathcal{SH}'$. (b) Weights of vertical lozenges in $\mathcal{SH}''$. (c) Recording the two weight assignments for the same tiling. (d) Reducing each red label by $1$, and increasing the blue label of the $k$-th vertical lozenge in each path $Q_i$ by $k$.}
		\label{newqweight}
\end{figure}

\subsection{Proof of Lemma \ref{Schurlem2}}

\begin{proof}[Proof of Lemma \ref{Schurlem2}]
Apply Theorem \ref{thr5} with $T=[2m+d]$ and $t_i=i$. Then the sum involving Schur functions on the left-hand side can be written as follows:
 \begin{align}
&\sum_{\substack{A\sqcup B=[2m+d]\\|A|=m, |B|=m+d}}s_{\lambda(A)}(q,q^2,\dots,q^{m})s_{\lambda(B)}(q,q^2,\dots,q^{m+d})\notag\\
& \quad \quad \quad = 2^{m} \sum_{1\leq k_1<k_2<\cdots k_s\leq m+s} s_{\lambda(\{2,4,6,\dots \} \setminus \{2k_1,2k_2,\dots,2k_s\})}(q,q^2,q^3,\dots {q^m})\notag\\
&\quad\quad\quad \times s_{\lambda(\{1, 3,5,\dots \} \cup \{2k_1,2k_2,\dots,2k_s\})}(q,q^2,q^3,\dots {q^{m+d}}),
 \end{align}
where $s=\lfloor d/2 \rfloor$.

Let $U=\{u_1<u_2<\cdots<u_k\}$. Define the operations \[\blacktriangle(U):=\prod_{1\le i < j \le k}q^{u_j}-q^{u_i}.\]  Then we have \[s_{\lambda(A)}(q,q^2,\dots, q^{m})=q^{\square(A)}\frac{\blacktriangle(A)}{\blacktriangle([m])}.\]
and \[\blacktriangle([m])=\prod_{1\le i < j \le m}(q^{j}-q^{i})=(-1)^{\binom{m}{2}}q^{\binom{m+1}{3}}\Hf_q(m),\] for any positive integer $m$.
Fix an $s$-tuple $(k_1,k_2,\dots, k_s)$, where $1\leq k_1<k_2<\cdots k_s\leq m+s$. Denote by $C:=\{2,4,6,\dots \} \setminus \{2k_1,2k_2,\dots,2k_s\}$ and $D:=\{1,3,5,\dots \} \cup \{2k_1,2k_2,\dots,2k_s\}$. We have
\begin{align}\label{shurlem2eq1}
s_{\lambda(C)}&(q,q^2,\dots,q^{m}) \cdot  s_{\lambda(D)}(q,q^2,\dots,q^{m+d})=\frac{q^{\square(C)+\square(D)}}{\blacktriangle([m])\blacktriangle([m+d])} \cdot \blacktriangle(C)\blacktriangle(D).
\end{align}
We write 
\begin{align}\label{shurlem2eq2}
 \blacktriangle(C)=\dfrac{\displaystyle\prod_{1\le 2i <2j \le 2m+d}(q^{2j}-q^{2i}) \prod_{1\le i < j \le s} (q^{2k_j}-q^{2k_i})}{\displaystyle\prod_{i=1}^s\left(\prod_{1\le 2j<2k_i}(q^{2k_i}-q^{2j})\prod_{2k_i<2j\le2m+d}(q^{2j}-q^{2k_i})\right)},
\end{align}
\begin{align}\label{shurlem2eq3}
\displaystyle \blacktriangle(D)=&\prod_{1\le 2i+1 <2j+1 \le  2m+d}(q^{2j+1}-q^{2i+1}) \prod_{1\le i < j \le s} (q^{2k_j}-q^{2k_i})\notag\\
&\times \prod_{i=1}^s\left( \prod_{1\le 2j+1<2k_i}(q^{2k_i}-q^{2j+1})\prod_{2k_i<2j+1\le2m+d}(q^{2j+1}-q^{2k_i})\right).
\end{align}
Let $E$ and $O$ denote the sets of even and odd numbers in $[2m+d]$, respectively. Multiplying the two identities above, we have %({\color{red}simplify})
\begin{align}\label{shurlem2eq4}
 \blacktriangle(C)\blacktriangle(D)%%=&\prod_{1\le 2i <2j \le 2m+d}(q^{2j}-q^{2i}) \prod_{1\le 2i+1 <2j+1 \le  2m+d}(q^{2j+1}-q^{2i+1})   \prod_{1\le i < j \le s} (q^{2k_j}-q^{2k_i})^2 \notag\\
 %%&\times{\prod_{i=1} ^s}   \dfrac{\displaystyle\prod_{1\le 2j+1<2k_i}(q^{2k_i}-q^{2j+1})\prod_{2k_i<2j+1\le2m+d}(q^{2j+1}-q^{2k_i})}{\displaystyle\prod_{1\le 2j<2k_i}(q^{2k_i}-q^{2j})\prod_{2k_i<2j\le2m+d}(q^{2j}-q^{2k_i})}\notag\\
 =&\blacktriangle(E)\blacktriangle(O)  \prod_{1\le i < j \le s} (q^{2k_j}-q^{2k_i})^2 \notag\\
 &\times \prod_{i=1}^{s} \dfrac{\displaystyle (-1)^{k_i-1} q^{k_i^2}  (q-1) (q^3;q^2)_{k_i-1} \cdot  B}{\displaystyle (-1)^{k_i-1} q^{k_i(k_i-1)}(q^2;q^2)_{k_i-1} (-1)^{m+s-k_i}q^{2k_i(m+s-k_i)}(q^2;q^2)_{m+s-k_i}} 
\end{align}
where
\[
B=\begin{cases}
(-1)^{m+s-k_i+1}q^{2k_i(m+s-k_i)}(q-1) (q^3;q^2)_{m+s-k_i} &\text{ if $d$ is odd;}\\
(-1)^{m+s-k_i}q^{2k_i(m+s-k_i)}(q;q^2)_{m+s-k_i} &\text{ if $d$ is even.}
\end{cases}.
\]
After simplifying, we obtain when $d=2s:$ 
\begin{align}\label{shurlem2eq5}
 \blacktriangle(C)\blacktriangle(D) =&\blacktriangle(E)\blacktriangle(O)    \prod_{1\le i < j \le s} (q^{2k_j}-q^{2k_i})^2 \notag\\
 &\times  \prod_{i=1}^{s} q^{k_i}(q-1) \cdot \dfrac{  (q^{3};q^2)_{k_i-1} (q;q^2)_{m+s-k_i} }{ (q^2;q^2)_{k_i-1}(q^2;q^2)_{m+s-k_i}}, 
 \end{align}
when $d=2s+1:$ 
\begin{align}\label{shurlem2eq6}
 \blacktriangle(C)\blacktriangle(D) =&\blacktriangle(E)\blacktriangle(O)  \prod_{1\le i < j \le s} (q^{2k_j}-q^{2k_i})^2 \notag\\
 &\times \prod_{i=1}^{s} q^{3k_i}(q-1)^2 \cdot \dfrac{  (q^{3};q^2)_{k_i-1} (q^3;q^2)_{m+s-k_i}}{ (q^2;q^2)_{k_i-1}(q^2;q^2)_{m+s-k_i}}, 
\end{align}

Since $C=[2m+d]\setminus D$, we have \[\square(C)+\square(D)=\binom{2m+d+1}{2}-\binom{m+1}{2}-\binom{m+d+1}{2}=m(m+d).\] %Thus
%\[\frac{q^{\square(C)+\square(D)}}{\blacktriangle([m])\blacktriangle([m+d])} \blacktriangle(E)\blacktriangle(O) \]
%does not depend on the choice of the $s$-tuple $(k_1,k_2,\dots, k_s)$. Summing up over all these $s$-tuples, we get
From   (\ref{shurlem2eq1}),  (\ref{shurlem2eq5}), and  (\ref{shurlem2eq6}), we obtain the following when $d$  is even: %({\color{red}simplify})
\begin{align}\label{shurlem2eq7}
&\sum_{1\leq k_1<k_2<\cdots k_s\leq m+s} s_{\lambda(C)}(q,q^2,q^3\dots,q^m) \cdot  s_{\lambda(D)}(q,q^2,q^3\dots,q^{m+d})\\
%%&= \frac{(q-1)^s\cdot q^{m(m+d)}}{\blacktriangle([m])\blacktriangle([m+d])}\blacktriangle(E)\blacktriangle(O) \notag \\
%%&\times \sum_{1\leq k_1<k_2<\cdots k_s\leq m+s}  \prod_{1\le i < j \le s} (q^{2k_j}-q^{2k_i})^2 \prod_{i=1}^{s} q^{k_i} \dfrac{  (q^{3};q^2)_{k_i-1} (q;q^2)_{m+s-k_i} }{ (q^2;q^2)_{k_i-1}(q^2;q^2)_{m+s-k_i}}\notag\\
&= \frac{(q-1)^s\cdot q^{2s(s-1)+s}\cdot q^{m(m+d)}}{\blacktriangle([m])\blacktriangle([m+d])} \blacktriangle(E)\blacktriangle(O)\notag\\
& \times \sum_{0\leq k'_1<k'_2<\cdots k'_s\leq m+s-1}   q^{\sum_{i=1}^{s} k'_i} \cdot  \prod_{1\le i < j \le s} (q^{2k'_j}-q^{2k'_i})^2 \prod_{i=1}^{s}  \dfrac{  (q^{\bf 3};q^2)_{k'_i} (q;q^2)_{(m+s-1)-k'_i} }{ (q^2;q^2)_{k'_i}(q^2;q^2)_{(m+s-1)-k'_i}},\notag
\end{align}
where $k_i':=k_i-1$.
Similarly, when $d$ is odd, we have: %({\color{red}simplify})
\begin{align}\label{shurlem2eq8}
&\sum_{1\leq k_1<k_2<\cdots k_s\leq m+s} s_{\lambda(C)}(q,q^2,q^3\dots,q^m) \cdot  s_{\lambda(D)}(q,q^2,q^3\dots,q^{m+d})\\
%%&= \frac{(q-1)^{2s}\cdot q^{m(m+d)}}{\blacktriangle([m])\blacktriangle([m+d])} \blacktriangle(E)\blacktriangle(O)\notag \\
%%&\times \sum_{1\leq k_1<k_2<\cdots k_s\leq m+s}  \prod_{1\le i < j \le s} (q^{2k_j}-q^{2k_i})^2 \prod_{i=1}^{s} q^{3k_i} \dfrac{  (q^{3};q^2)_{k_i-1} (q^3;q^2)_{m+s-k_i} }{ (q^2;q^2)_{k_i-1}(q^2;q^2)_{m+s-k_i}}\notag\\
&= \frac{(q-1)^{2s}\cdot q^{2s(s-1)+3s}\cdot q^{m(m+d)}}{\blacktriangle([m])\blacktriangle([m+d])}\blacktriangle(E)\blacktriangle(O)\notag\\
&\times \sum_{0\leq k'_1<k'_2<\cdots k'_s\leq m+s-1} (q^3)^{\sum_{i=1}^s k'_i}  \prod_{1\le i < j \le s} (q^{2k'_j}-q^{2k'_i})^2 \prod_{i=1}^{s}  \dfrac{  (q^{3};q^2)_{k'_i} (q^3;q^2)_{(m+s-1)-k'_i} }{ (q^2;q^2)_{k'_i}(q^2;q^2)_{(m+s-1)-k'_i}}.
%&{\color{purple}\text{ replace }\times \sum_{0\leq k'_1<k'_2<\cdots k'_s\leq m+s-1}{(q^3)}^{\sum_{i=1}^s k'_i}  \prod_{1\le i < j \le s} (q^{2k'_j}-q^{2k'_i})^2 \prod_{i=1}^{s}  \dfrac{  (q^{\bf\color{purple}3};q^2)_{k'_i} (q^3;q^2)_{(m+s-1)-k'_i} }{ (q^2;q^2)_{k'_i}(q^2;q^2)_{(m+s-1)-k'_i}}.}
\end{align}

Next, we apply Theorem \ref{thr6} and get the following two identities.
If $d$ is even, say $d=2s$, then
\begin{align}\label{shurlem2eq9}
&\sum_{1\leq k_1<k_2<\cdots k_s\leq m+s} s_{\lambda(C)}(q,q^2,q^3\dots,q^m) \cdot  s_{\lambda(D)}(q,q^2,q^3\dots,q^{m+d})\\
&= \frac{(q-1)^s\cdot q^{2s(s-1)+s}\cdot q^{m(m+d)}}{\blacktriangle([m])\blacktriangle([m+d])} \blacktriangle(E)\blacktriangle(O)\notag\\
&\times q^{2\binom{s}{3}}q^{\binom{s}{2}}\prod_{i=1}^{s} \frac{(q^{3};q^2)_{i-1}(q;q^2)_{i-1}(q^{2(i+s)};q^2)_{m}(q^2;q^2)_{i-1}}{(q^2;q^2)_{m+s-i}}.\notag
\end{align}
If $d$ is odd, say $d=2s+1$, then
\begin{align}\label{shurlem2eq10}
&\sum_{1\leq k_1<k_2<\cdots k_s\leq m+s} s_{\lambda(C)}(q,q^2,q^3\dots,q^m) \cdot  s_{\lambda(D)}(q,q^2,q^3\dots,q^{m+d})\\
&= \frac{(q-1)^{2s}\cdot q^{2s(s-1)+3s}\cdot q^{m(m+d)}}{\blacktriangle([m])\blacktriangle([m+d])}\blacktriangle(E)\blacktriangle(O)\notag\\
&\times q^{2\binom{s}{3}}q^{3\binom{s}{2}}\prod_{i=1}^{s} \frac{(q^{3};q^2)_{i-1}(q^3;q^2)_{i-1}(q^{2(i+s+1)};q^2)_{m}(q^2;q^2)_{i-1}}{(q^2;q^2)_{m+s-i}}.\notag
\end{align}
We note that if $d=2s$, then
\begin{align}\label{shurlem2eq11}
\blacktriangle(E)\blacktriangle(O)&=\prod_{1\leq i<j\leq m+s}(q^{2j}-q^{2i}) \cdot  \prod_{1\leq i<j\leq m+s}(q^{2j-1}-q^{2i-1})\\
&=\prod_{1\leq i<j\leq m+s}(q^{2j}-q^{2i}) \cdot  \prod_{1\leq i<j\leq m+s}q^{-1}(q^{2j}-q^{2i})\notag\\
&=q^{-\binom{m+s}{2}} \prod_{1\leq i<j\leq m+s}(q^{2j}-q^{2i})^2\notag\\
&=q^{-\binom{m+s}{2}}\cdot  q^{4\binom{m+s+1}{3}} \Hf_{q^2}(m+s)^2. \notag
\end{align}
If $d=2s+1$, then 
\begin{align}\label{shurlem2eq12}
\blacktriangle(E)\blacktriangle(O)&=\prod_{1\leq i<j\leq m+s}(q^{2j}-q^{2i}) \cdot  \prod_{1\leq i<j\leq m+s+1}(q^{2j-1}-q^{2i-1})\\
&=q^{-\binom{m+s}{2}} \prod_{1\leq i<j\leq m+s}(q^{2j}-q^{2i})^2 \cdot \prod_{i=1}^{m+s} (q^{2m+2s+1}-q^{2i-1})\notag\\
&=q^{-\binom{m+s}{2}}\cdot \left( q^{2\binom{m+s+1}{3}} \Hf_{q^2}(m+s)\right)^2 \cdot (-1)^{m+s} q^{(m+s)^2} (q^2;q^2)_{m+s}\notag\\
&=(-1)^{m+s}q^{\binom{m+s+1}{2}} q^{4\binom{m+s+1}{3}} \Hf_{q^2}(m+s) \Hf_{q^2}(m+s+1).\notag
\end{align}
The lemma now follows from (\ref{shurlem2eq9}) and (\ref{shurlem2eq11}) for the case when $d=2s$, and from (\ref{shurlem2eq10}) and (\ref{shurlem2eq12}) for the case when $d=2s+1$.
\end{proof}

\subsection{Proof of Lemma \ref{Schurlem}}

\begin{proof}[Proof of Lemma \ref{Schurlem}]
We have
\begin{align}\label{Schureq0}
&\frac{s_{\lambda(A')}(q,q^2,\dots, q^{m+n+a})s_{\lambda(B')}(q,q^2,\dots, q^{m+n+b})}{s_{\lambda(A)}(q,q^2,\dots, q^{a})s_{\lambda(B)}(q,q^2,\dots, q^{b})}\notag\\
&=q^{\square(A')+\square(B')-\square(A)-\square(B)}\frac{\blacktriangle(A')\blacktriangle(B')}{\blacktriangle(A)\blacktriangle(B)}\frac{\blacktriangle([a])\blacktriangle([b])}{\blacktriangle([m+n+a])\blacktriangle([m+n+b])}.
\end{align}

Now consider the exponent of $q$ on the right-hand side. We note that \[\square(U)=\sum_{u\in U} u-\binom{|U|+1}{2}.\] Since $A=[a+b]\setminus B$, we have %({\color{red}simplify})
\begin{align}\label{Schureq1}
\square&(A')+\square(B')-\square(A)-\square(B)\notag\\
=&\sum_{i\in A'} i+\sum_{i\in B'}i-\sum_{i\in A}i-\sum_{i\in B}i\notag\\
&\quad-\binom{m+n+a+1}{2}-\binom{m+n+b+1}{2}+\binom{a+1}{2}+\binom{b+1}{2}\notag\\
%%=&\left(\sum_{i\in A' \cup B'} i+ \sum_{i\in A' \cap B'} i\right)-\sum_{i\in A\cup B} i\notag\\
%%&\quad-\binom{m+n+a+1}{2}-\binom{m+n+b+1}{2}+\binom{a+1}{2}+\binom{b+1}{2}\notag\\
%%=&\left(\binom{m+n+a+b+1}{2}+\binom{m+1}{2}+\binom{m+n+a+b+1}{2}-\binom{m+a+b+1}{2}\right)\notag\\&\quad -\binom{{a+b}+1}{2}
%%-\binom{m+n+a+1}{2}-\binom{m+n+b+1}{2}+\binom{a+1}{2}+\binom{b+1}{2} \notag\\
=&n(a+b).
\end{align}
Next, we recall that \[\blacktriangle([m])=\prod_{1\le i < j \le m}(q^{j}-q^{i})=(-1)^{\binom{m}{2}}q^{\binom{m+1}{3}}\Hf_q(m),\] for any positive integer $m$.
Thus
\begin{align} \label{Schureq2}
\frac{\blacktriangle([a])\blacktriangle([b])}{\blacktriangle([m+n+a])\blacktriangle([m+n+b])}&=(-1)^{(a+b)(m+n)} q^{\binom{a+1}{3}+\binom{b+1}{3}-\binom{m+n+a+1}{3}-\binom{m+n+b+1}{3}}\notag\\
&\times \frac{\Hf_q(a) \Hf_q(b)}{\Hf_q(m+n+a) \Hf_q(m+n+b)}.
\end{align}
%{\color{purple}in 4.21, replace $(-1)^{(a+b)(m+n)}$ with $(-1)^{-(m+n)(m+n+a+b-1)}$:Because the exponents have the same parity.}\\
Finally, we simplify
$\frac{\blacktriangle(A')\blacktriangle(B')}{\blacktriangle(A)\blacktriangle(B)}.$ We note that $\blacktriangle(k+U)=q^{k\cdot \binom{|U|}{2} }\blacktriangle(U)$. %{\color{purple} replace with $\blacktriangle(k+{\color{purple}U})=q^{k\binom{|U|}{2} }\blacktriangle(U)$. Good catch!}.
Assume that $A=\{x_1<x_2<\cdots<x_a\}$ and $B=\{y_1<y_2<\cdots<y_b\}$. We write 
\begin{align}
\blacktriangle(A')&=\blacktriangle([m])\cdot \blacktriangle(m+A)\cdot \blacktriangle (m+a+b+[n])\notag\\
&\times \prod_{\substack{1\le i \le m\\ 1\leq j \leq a}} (q^{m+x_j}-q^i)\cdot \prod_{\substack{1\le i \le a\\ 1\leq j \leq n}} (q^{m+a+b+j}-q^{m+x_i}) \cdot \prod_{\substack{1\le i \le m\\ 1\leq j \leq n}}(q^{m+a+b+j}-q^{i})\notag\\
&=q^{m\binom{a}{2}+(m+a+b)\binom{n}{2}}\blacktriangle([m]) \cdot \blacktriangle(A) \cdot\blacktriangle([n])\notag\\
&\times\prod_{\substack{1\le i \le m\\ 1\leq j \leq a}} (q^{m+x_j}-q^i)\cdot \prod_{\substack{1\le i \le a\\ 1\leq j \leq n}} (q^{m+a+b+j}-q^{m+x_i}) \cdot \prod_{\substack{1\le i \le m\\ 1\leq j \leq n}}(q^{m+a+b+j}-q^{i}),
\end{align}
%{\color{purple} in 4.22, replace $q^{ma+(m+d)n}$ with $q^{m\binom{a}{2}+(m+a+b)\binom{n}{2}}$}\\
doing similarly for $B',$ 
\begin{align}
	\blacktriangle(B')	&=q^{m\binom{b}{2}+(m+a+b)\binom{n}{2}}\blacktriangle([m]) \cdot \blacktriangle(B) \cdot\blacktriangle([n])\notag\\
	&\times\prod_{\substack{1\le i \le m\\ 1\leq j \leq b}} (q^{m+y_j}-q^i)\cdot \prod_{\substack{1\le i \le b\\ 1\leq j \leq n}} (q^{m+a+b+j}-q^{m+y_i}) \cdot \prod_{\substack{1\le i \le m\\ 1\leq j \leq n}}(q^{m+a+b+j}-q^{i}).
	\end{align} 
Observe that %({\color{red}simplify})
\begin{align*}
\prod_{\substack{1\le i \le m\\ 1\leq j \leq a+b}} (q^{m+j}-q^i)&=\prod_{j=1}^{a+b}\prod_{i=1}^{m}q^{i}(q^{m-i+j}-1)\\
%%&=\prod_{j=1}^{a+b} q^{\binom{m+1}{2}} \prod_{i=1}^{m}(q^{m-i+j}-1)\\
%%&=q^{(a+b)\binom{m+1}{2}}\prod_{j=1}^{a+b} (q^{m+j-1}-1)(q^{m+j-2}-1)\cdots (q^{j}-1)\\
%%&=q^{(a+b)\binom{m+1}{2}}\prod_{j=1}^{a+b} (-1)^{m} (q^j;q)_m\\
&=q^{(a+b)\binom{m+1}{2}}(-1)^{(a+b)m}\prod_{j=1}^{a+b}(q^j;q)_m.
\end{align*}
Similarly, we have %({\color{red}simplify})
\begin{align*}
\prod_{\substack{1\le i \le a+b\\ 1\leq j \leq n}} (q^{m+a+b+j}-q^{m+i})&=\prod_{i=1}^{a+b} \prod_{j=1}^{n}q^{m+i}(q^{a+b-i+j}-1)\\
%%&=\prod_{i=1}^{a+b} q^{(m+i)n} \prod_{j=1}^{n}(q^{a+b-i+j}-1)\\
%%&=q^{mn(a+b)+n\binom{a+b+1}{2}}\prod_{i=1}^{a+b} (q^{a+b-i+1}-1)(q^{a+b-i+2}-1)\cdots (q^{a+b-i+n}-1)\\
%%&=q^{mn(a+b)+n\binom{a+b+1}{2}}\prod_{i=1}^{a+b} (-1)^{n} (q^{a+b-i+1};q)_n\\
&=q^{mn(a+b)+n\binom{a+b+1}{2}}(-1)^{(a+b)n}\prod_{i=1}^{a+b} (q^{a+b-i+1};q)_n
\end{align*}
and %({\color{red}simplify})
\begin{align*}
\prod_{\substack{1\le i \le m\\ 1\leq j \leq n}} (q^{m+a+b+j}-q^{i})&=\prod_{j=1}^{n} \prod_{i=1}^{m}q^{i}(q^{m+a+b-i+j}-1)\\
%%&=\prod_{j=1}^{n} q^{\binom{m+1}{2}} \prod_{i=1}^{m}(q^{m+a+b-i+j}-1)\\
%%&=q^{n\binom{m+1}{2}}\prod_{j=1}^{n} (q^{m+a+b+j-1}-1)(q^{m+a+b+j-2}-1)\cdots (q^{a+b+j}-1)\\
%%&=q^{n\binom{m+1}{2}}\prod_{j=1}^{n} (-1)^{m} (q^{a+b+j};q)_m\\
&=q^{n\binom{m+1}{2}}(-1)^{mn}\prod_{j=1}^{n} (q^{a+b+j};q)_m.
\end{align*}   
Since $A=[a+b]\setminus B$, we now may write $\frac{\blacktriangle(A')\blacktriangle(B')}{\blacktriangle(A)\blacktriangle(B)}$ as follows. %({\color{red}simplify})
\begin{align} \label{Schureq3}
\frac{\blacktriangle(A')\blacktriangle(B')}{\blacktriangle(A)\blacktriangle(B)}
%=q^{m\binom{a}{2}+m\binom{b}{2}+2(m+a+b)\binom{n}{2}}\blacktriangle([m])^2\blacktriangle([n])^2\notag\\
%&\quad \quad \times \left( \prod_{\substack{1\le i \le m\\ 1\leq j \leq a}} (q^{m+{x}_j}-q^i)  \prod_{\substack{1\le i \le m\\ 1\leq j \leq b}} (q^{m+{y}_j}-q^i) \right)\notag\\
%&\quad \quad \times  \left( \prod_{\substack{1\le i \le a\\ 1\leq j \leq n}}  (q^{m+a+b+j}-q^{m+{x}_i}) \prod_{\substack{1\le i \le b\\ 1\leq j \leq n}}  (q^{m+a+b+j}-q^{m+{y}_i}) \right)\notag\\
%&\quad \quad \times \left(\prod_{\substack{1\le i \le m\\ 1\leq j \leq n}} (q^{m+a+b+j}-q^{i})\right)^2\notag\\
&\quad =q^{m\binom{a}{2}+m\binom{b}{2}+2(m+a+b)\binom{n}{2}}\blacktriangle([m])^2\blacktriangle([n])^2 \cdot \prod_{\substack{1\le i \le m\\ 1\leq j \leq a+b}} (q^{m+j}-q^i)\notag \\
&\quad \quad \times   \prod_{\substack{1\le i \le a+b\\ 1\leq j \leq n}} (q^{m+a+b+j}-q^{m+i}) \cdot  \prod_{\substack{1\le i \le m\\ 1\leq j \leq n}} (q^{m+a+b+j}-q^{i})^2\notag\\
%%&{=q^{m\binom{a}{2}+m\binom{b}{2}+2(m+a+b)\binom{n}{2}}q^{2\binom{m+1}{3}+2\binom{n+1}{3}}\Hf_q(m)^2\Hf_q(n)^2 } \notag \\
%%	&\quad \quad \times   \prod_{\substack{1\le i \le m\\ 1\leq j \leq a+b}} (q^{m+j}-q^i)\cdot \prod_{\substack{1\le i \le a+b\\ 1\leq j \leq n}} (q^{m+a+b+j}-q^{m+i}) \cdot  \prod_{\substack{1\le i \le m\\ 1\leq j \leq n}} (q^{m+a+b+j}-q^{i})^2 \notag\\
 &=q^{m\binom{a}{2}+m\binom{b}{2}+2(m+a+b)\binom{n}{2}}q^{2\binom{m+1}{3}+2\binom{n+1}{3}} \Hf_q(m)^2\Hf_q(n)^2 \notag\\
&\times q^{(a+b)\binom{m+1}{2}}(-1)^{(a+b)m}\prod_{j=1}^{a+b}(q^j;q)_m \notag\\
&\times q^{mn(a+b)+n\binom{a+b+1}{2}}(-1)^{(a+b)n}\prod_{i=1}^{a+b} (q^{a+b-i+1};q)_n\cdot  q^{m(m+1)n} \prod_{j=1}^{n} (q^{a+b+j};q)_m^2.
\end{align}
Combining (\ref{Schureq0}), (\ref{Schureq1}), (\ref{Schureq2}), and (\ref{Schureq3}), we obtain the desired identity after a routine simplification.

\end{proof}
\bibliographystyle{plain}

\bibliography{TriRevision.bib}

\end{document}